\newcommand{\cD}{\mathcal{D}}
\newcommand{\cO}{\mathcal{O}}
\newcommand{\IR}{\mathbb{R}}
\newcommand{\IN}{\mathbb{N}}
\newcommand{\vr}{\mathbf{r}}
\newcommand{\vg}{\mathbf{g}}
\newcommand{\ve}{\mathbf{e}}
\newcommand{\vx}{\mathbf{x}}
\newcommand{\vn}{\mathbf{n}}
\newcommand{\vv}{\mathbf{v}}
\newcommand{\cm}{\mathcal{m}}
\newcommand{\cl}{\mathcal{l}}
\newcommand{\half}{\frac{1}{2}}
\newcommand{\fourth}{\frac{1}{4}}
\newcommand{\vlambda}{\boldsymbol{\lambda}}
\newcommand{\vnu}{\boldsymbol{\nu}}
\newtheorem{corollary}{Corollary}
\newtheorem{theorem}{Theorem}
\newtheorem{lemma}{Lemma}
\newtheorem{remark}{Remark}
\newtheorem{problem}{Problem}
\DeclareMathOperator{\spn}{span}
\begin{document}

\title{Fast Galerkin Method for Solving Helmholtz Boundary Integral Equations on Screens
\thanks{This work was partially funded by Fondecyt Regular 1171491 and grant Conicyt  21171479 Doctorado Nacional 2017.}}
\shorttitle{Fast Galerkin Method for Solving Helmholtz BIEs on Screens}

\author{%
{\sc
Carlos~Jerez-Hanckes\thanks{Facultad de Ingenier\'ia y Ciencias, Universidad Adolfo Ib\'a\~nez, Santiago, Chile. Email: carlos.jerez@uai.cl},
Jos\'e Pinto\thanks{School of Engineering, Pontificia Universidad Cat\'olica de Chile, Santiago, Chile, Email: jspinto@uc.cl}
}
}

\maketitle

\begin{abstract}
{We solve first-kind Fredholm boundary integral equations arising from Helmholtz and Laplace problems on bounded, smooth screens in three-dimensions with either Dirichlet or Neumann conditions. The proposed Galerkin-Bubnov method takes as discretization elements pushed-forward weighted azimuthal projections of standard spherical harmonics onto the canonical disk. We show that these bases allow for spectral convergence and provide fully discrete error analysis. Numerical experiments support our claims, with results comparable to Nystr\"om-type and $hp$-methods.
}
{Boundary integral equations, spectral methods, wave scattering problems, screens problems, non-Lipschitz domains.}
\end{abstract}

\section{Introduction}
We study the solution of the Helmoltz and Laplace problems with Dirichlet or Neumann conditions posed on an open orientable bounded surface $\Gamma \subset \IR^3$. These can be summarized as follows:
\begin{problem}
\label{prob:edp}
Let $k \geq 0$, find $u$ defined in $\IR^3$, such that 
\begin{align*}
-\Delta u -k^2 u =0, \quad \text{on } \IR^3 \setminus\overline{\Gamma}, \\
u  = g_d \quad \text{or} \quad \partial_n u  = g_n \quad \text{in } \Gamma, \\
\text{condition at infinity}(k),
\end{align*}
where $\partial_n u$  is the normal derivative of $u$  with the normal defined via the parametrization used\footnote{A fixed orientation for the normal vector is required, and one can choose either of the two orientations that yield continuous normal vectors without loss of generality.}, $g_d$ and $g_n$ are suitable Dirichlet and Neumann data, respectively, and the condition at infinity reads 
\begin{align*}
\begin{cases}
\lim_{ \| \vx \| \rightarrow \infty } \| \vx \|  \left( 
 \partial_{\|\vx \|}u -iku
\right)=0, & k>0,\\
u(\vx) = O(\|\vx\|^{-1}),\text{ as } \|\vx	\| \rightarrow \infty, & k =0,
\end{cases}
\end{align*}
\end{problem} 
For a detailed discussion of these conditions see \cite[Chap.~8 and 9]{mclean2000strongly}. The Laplace case occurs when $k=0$.  

Problem \ref{prob:edp} and  associated boundary integral equations (BIEs) have been extensively studied \cite{Stephan1987, Stephan86,COSTABEL,HaDoung1990,HJU18,lintner_bruno_2015,RAN17,HJU20}. Indeed, BIEs rigorously recast the volume problem onto the screen while taking into account the corresponding condition at infinity. By meshing the open surface $\Gamma$, standard local low-order approximations can be built and solved by today's compression algorithms \cite{bebendorf2008a,yijun2009a} to accelerate computations. However, solving Problem \ref{prob:edp} remains far from trivial as standard boundary element implementations converge at the worst rate possible due to the solution's singular behavior near the screen surface. Thus, one has to resort to special meshing techniques \cite{Norber99} or to increase the polynomial degree of the underlying discretization in a suitable manner \cite{Norbert2002,Norbert2007} so as to recover better error convergence rates. This however is a tedious procedure when solving for multiple configurations as in uncertainty quantification or inverse problems.

In two-dimensional space, i.e.~when $\Gamma$ is an open arc, an alternative discretization of BIEs can be performed by means of weighted Chebyshev polynomials --spectral discretization--, which explicitly capture the edge singularity and allow for super-algebraic convergence whenever $g_d$ and $g_n$ are smooth functions \cite{saranen2013periodic,Jerez-Hanckes2012, ElPinto,JHP20}.  Opposingly, for screens in $\IR^3$, to the best of our knowledge, no such equivalent discretization exists. Moreover, though for closed surfaces an spectral BIE method was presented by Sloan \emph{et al.}~\cite{sloan2002}, the presence of extra singularities around the surface edges and the structure of the underlying fundamental solution renders impossible the extension of this method to the context of open surfaces directly. 

Our main contribution is the derivation and analysis of spectral Galerkin-Bubnov methods to tackle Laplace and Helmholtz BIEs on open surfaces in three-dimensional space. In contrast to the Nyst\"rom approach \cite{BRUNO2013250}, our spectral Galerkin method preserves all the theoretical properties of a Galerkin method and its implementation relies in suitable quadrature rules and change of variables without the need of special window functions, while maintaining the convergence rate for smooth geometries. On the other hand, while special $p$- and $hp$-discretizations (see  \cite{Norber99, Norbert2002,Norbert2007}) could also achieve super-algebraic convergence rates, and being more flexible for non-smooths geometries, they need a mesh which in practice could slowdown the implementation when solutions for different geometries are needed as in uncertainty quantification or inverse problem \cite{Kress95}.

Our work is structured as follows. In Section \ref{sec:SH} we define a family of finite-dimensional functional spaces by projecting the spherical harmonics into the unitary disk and consider their linear span. We then define the trial and test spaces for the Galerkin method by mapping the functions to the screen in consideration. 
From the definition of our trial space, we construct adequate auxiliary functional spaces which allow for the associated error analysis, and we describe how theses spaces are related to the standard Sobolev one. This fact, in conjunction with standard Galerkin properties for the variational versions of the BIEs (Section \ref{sec:BIOS}), lead to a semi-discrete a priori error bound in Theorem \ref{thrm:rateConvergence}.  Then, in Section \ref{sec:quaderror}, we detail the quadrature procedure used to approximate the action of the weakly- and hyper-singular boundary integral operators (BIOs) on the trial space, and provide a full error analysis which incorporate the quadrature error.  Finally, in Section \ref{sec:numres3d} we show some numerical examples that exhibit the super-algebraic convergence method.

\section{Mathematical Tools}
This section introduces general definitions needed for the analysis of Problem \ref{prob:edp} and of the proposed Galerkin spectral method. 

Given $\vx, \vx' \in \mathbb{C}^d$, $d \in \IN$, the standard dot product is denoted by $\vx \cdot \vx' = \sum_{j=1}^d x_j \overline{x'}_j$, and the Euclidean norm satisfies $\| \vx \|^2 = \vx \cdot \vx$. For $d = 3$ and real vectors, we denote $\vx \times \vx'$ the cross-product --vectorial product. For $a,b \in \IR$ we will make use of the notation $a \lesssim  b$ whenever there is a positive number $C$ such that $a \leq C b$, typically $C$ is a constant not relevant for the underlying analysis. Also, we will denote by $L^2(A)$ the classical Lebesgue space of square-integrable functions over a measurable set $A \subset \IR^d$, $d = 2,3$.  

\subsection{Geometry}
Throughout, we denote $\Gamma \subset \IR^3$ a smooth orientable connected surface with boundary, also called smooth screen or simply screen. We assume that the screen is contained on a smooth orientable closed surface which will be denoted $\widetilde{\Gamma}$. The canonical disk and spheres are given, respectively, by
\begin{align*}
\mathbb{D}:= \{ \vx \in \IR^2 : \|\vx\| \leq 1 \}, \quad
\mathbb{S} := \{ \vx \in \IR^3 : \|\vx\| = 1 \}.
\end{align*} 
For any given screen, we assume that there is a parametrization $\vr : \mathbb{D}\rightarrow \Gamma $, being a smooth function such that in every argument and coordinate can be extended to an analytic function on a Bernstein ellipse in the complex plane of parameter\footnote{Given and interval $[a,b] \subset \IR$, the Bernstein ellipse of parameter $\rho$, corresponds to an ellipse on the complex plane with foci at $a,b$, semi-major axis $\frac{(b-a)(\rho+\rho^{-1})}{4}$, and semi-minor axis $\frac{(b-a)(\rho-\rho^{-1})}{4}$. } $\rho>1$. The class of functions portraying the regularity previously described are referred to as $\rho-$analytic. Furthermore, we impose that the gradients of $\vr$ (as a matrix) have full rank for any point, and the Jacobian, $J_\vr(\vx) = \| \partial_{x_1} \vr \times \partial_{x_2}
 \vr \|$ is such that $J_\vr(\vx)/\| \vx\|$ is bounded and nowhere null.
 
The direction of the unitary normal vector of $\Gamma$ is selected to be equal to  $\partial_{x_1} \vr \times \partial_{x_2}
 \vr$.  This imply that, rigorously speaking, a screen is characterized by the particular parametrization $\vr$ and no by the set of points that represent the physical domain $\Gamma$.

On $\mathbb{S}$, we will often consider spherical coordinates characterized by two angles $(\theta, \varphi) \in [0,2\pi]\times[0,\pi]$ . We will always assume that $\theta$ corresponds to the polar angle of a given point when projected to the $x_3=0$ plane, and $\varphi$ denotes the angle measured from the $x_3-$axis. Similarly, in $\mathbb{D}$ we use polar coordinates $(r,\theta)\in [0,1]\times[-\frac{\pi}{2},\frac{3\pi}{2}]$.

\subsection{Classical Functional Spaces}
Given a set $\cO \subset \IR^d$, $d \in \{2,3\}$, we denote by $\cD(\cO)$ the space of smooth functions with compact support in $\cO$ with topological dual $\cD(\cO)'$. If $\cO$ is open then $\cD(\cO)$ are smooth functions whose extension by zero is also smooth. If $\cO$ is compact then $\cD(\cO)$ are the set of infinity differentiable  functions. We will require Sobolev spaces defined on open and closed surfaces. For $\Gamma$ an open surface, the Sobolev spaces $H^s(\widetilde{\Gamma})$, $s \in \IR$, are defined by using local parametrizations and a partition of unity \cite[Chap.~2]{mclean2000strongly}. Spaces $H^s(\Gamma)$, and $\widetilde{H}^s(\Gamma)$ , with $s \in \IR$, are also standard and can be defined as 
\begin{align*}
H^s(\Gamma) &:= \{ u \in \cD'(\Gamma) : \exists \ U \in H^s(\widetilde{\Gamma}), \  u= U|_\Gamma\}, \\
\widetilde{H}^s(\Gamma) &:= \{ u \in H^s(\widetilde{\Gamma}):\ \mathrm{supp}(u)\subset \overline{\Gamma}\}, 
\end{align*}
where the support and restriction have to be understood in the context of distributions. One can identify the dual space of $H^{s}(\Gamma)$ with $\widetilde{H}^{-s}(\Gamma)$, the corresponding duality product is denoted $\langle \cdot, \cdot \rangle_\Gamma$. The duality product is an extension of the $L^2(\Gamma)$ inner product as we work under the identification, $H^0(\Gamma) = \widetilde{H}^0(\Gamma) = L^2(\Gamma)$. The following Lemma is useful to retain control over the norms of functions defined on an arbitrary screen. 
\begin{lemma}[Theorem 3.23 in \cite{mclean2000strongly}] 
\label{lemma:SobolevLocalization}
Let $s\in \IR$, if $u \in H^s(\Gamma)$, we can define an equivalent norm as 
\begin{align*}
\| u \circ \vr \|_{H^s(\mathbb{D})} \cong \|u\|_{H^s(\Gamma)},
\end{align*}
where the unspecified constant depends on $\Gamma$ only. The same result holds true when we change $H^s(\Gamma)$ for $\widetilde{H}^s({\Gamma})$ and $H^s(\mathbb{D})$ for $\widetilde{H}^s(\mathbb{D})$, accordingly.
\end{lemma}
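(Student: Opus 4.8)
The plan is to view $\vr$ as a single global smooth chart for the screen $\Gamma$ and reduce the statement to the invariance of Sobolev norms under the change of variables induced by $\vr$, which is essentially the content of \cite[Thm.~3.23]{mclean2000strongly}. Concretely, since $H^s(\widetilde{\Gamma})$ is defined through a finite atlas of local parametrizations together with a subordinate partition of unity, the restriction norm $\|u\|_{H^s(\Gamma)}$ is equivalent to a sum of local Sobolev norms of the pull-backs of $u$ through those charts. Composing each chart with $\vr$ yields transition maps that are smooth with smooth inverses on the relevant compact sets, so it suffices to prove that $u \mapsto u\circ\vr$ is a bounded isomorphism from $H^s(\Gamma)$ onto $H^s(\mathbb{D})$, with norm constants depending only on $\vr$ and the atlas, i.e.~only on $\Gamma$.

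I would first establish the bound for nonnegative integers $s=m$. Writing the $H^m$-norm as the sum of the $L^2$-norms of all derivatives up to order $m$, the chain rule expresses each derivative of $u$ as a finite combination of derivatives of $u\circ\vr^{-1}$ weighted by products of derivatives of $\vr$; the $\rho$-analyticity of $\vr$ bounds all these derivatives uniformly on $\overline{\mathbb{D}}$, while the change-of-variables formula converts integrals over $\Gamma$ into integrals over $\mathbb{D}$ against the Jacobian $J_\vr$. Since the gradient of $\vr$ has full rank and $J_\vr$ is controlled from above and below, the inverse map $\vr^{-1}$ enjoys bounds of the same type, which yields the reverse inequality and hence the two-sided estimate for integer $m$. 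The case of real $s\ge 0$ then follows by interpolation, as the families $H^s(\mathbb{D})$ and $H^s(\Gamma)$ form interpolation scales \cite[App.~B]{mclean2000strongly} and a bounded operator between the integer endpoints interpolates to intermediate exponents. Negative orders are handled by duality: using the identification of the dual of $H^{s}(\Gamma)$ with $\widetilde{H}^{-s}(\Gamma)$ (and the analogous identification on $\mathbb{D}$), the pull-back at order $s<0$ is the adjoint of a pull-back at order $-s>0$, so boundedness transfers with the corresponding constants.

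For the $\widetilde{H}^s$ statement, the additional ingredient is that $\vr$ extends to a homeomorphism $\overline{\mathbb{D}}\to\overline{\Gamma}$ carrying $\partial\mathbb{D}$ onto $\partial\Gamma$; hence the distributional support condition $\mathrm{supp}(u)\subset\overline{\Gamma}$ is equivalent to $\mathrm{supp}(u\circ\vr)\subset\overline{\mathbb{D}}$, and the equivalence of the $\widetilde{H}^s$-norms follows from the $H^s(\widetilde{\Gamma})$-equivalence restricted to this closed subspace. The main obstacle, and the point requiring genuine care, is the behaviour of $\vr$ where the parametrization degenerates---reflected in the hypothesis controlling $J_\vr(\vx)/\|\vx\|$---since there the composition operator is not an evident bi-Lipschitz map; one must verify that the derivative bounds on $\vr^{-1}$ and the control on $J_\vr$ survive uniformly, so that both inequalities hold with constants depending on $\Gamma$ alone. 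A secondary subtlety is the treatment of the boundary correspondence $\partial\mathbb{D}\leftrightarrow\partial\Gamma$ at fractional and negative orders, where the distinction between $H^s$ and $\widetilde{H}^s$ is essential and the duality pairing must be matched consistently on both sides.
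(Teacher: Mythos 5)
The paper offers no proof of this lemma: it is imported verbatim as Theorem~3.23 of \cite{mclean2000strongly}, so there is no internal argument to compare against. Your outline (integer orders via the chain rule and the change-of-variables formula, interpolation for real $s\ge 0$, duality for $s<0$, and the support/boundary-correspondence argument for $\widetilde{H}^s$) is precisely the standard proof behind that cited theorem and is correct; the only remark worth making is that under the paper's standing assumption that the gradient of $\vr$ has full rank at every point of the closed disk, the Jacobian degeneracy you single out as the main obstacle does not actually arise for this particular statement (the condition on $J_\vr(\vx)/\|\vx\|$ pertains to the polar-coordinate expression of $\vr$ used later for quadrature, not to the Cartesian pull-back needed here).
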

\begin{remark}
Lemma \ref{lemma:SobolevLocalization} can be generalized to screens of restricted regularity by limiting the range of $s$ depending on the regularity (cf.~\cite[Theorem 3.2.3]{mclean2000strongly} for details). 
\end{remark}

Customarily, we extend the definitions of restrictions and normal derivatives over $\Gamma$ to linear bounded maps in appropriate Sobolev spaces.  In particular, following \cite[Chapter 2]{mclean2000strongly}, we define the Dirichlet traces as the maps $\gamma_d^\pm : H^s(\IR^3 \setminus  \overline{\Gamma}) \rightarrow H^{s-\half}(\Gamma)$ which extensions of the following operator:   
$$\gamma_d^\pm u(\vx) = \lim_{\epsilon \downarrow 0} u(\vx \pm \epsilon \widehat{\vn}(\vx)), \quad \vx\in \Gamma,$$
for $s > \half$, and where $\widehat{\vn}$ denotes the unitary normal vector whose direction depends on the parametrization $\vr$ of $\Gamma$. The Neumann trace $\gamma_n$ is defined as the extension of the normal derivative: 
$$\gamma_n^\pm u(\vx) = \lim_{\epsilon \downarrow 0} \nabla u(\vx \pm \epsilon \widehat{\vn}(\vx)) \cdot \widehat{\vn}(\vx), \quad \vx\in \Gamma, $$
 where the extension can be done from $H^s(\IR^3 \setminus  \overline{\Gamma}) \rightarrow H^{s-\half}(\Gamma)$ for $s > \frac{3}{2}$, or from $H^1(\IR^3 \setminus  \overline{\Gamma}) \rightarrow H^{-\half}(\Gamma)$ for functions such that $\Delta u$ is locally in $L^2(\IR^3  \setminus \Gamma)$ \cite[Chapter 4]{mclean2000strongly}.  Whenever $\gamma^+_d = \gamma^-_d$ (resp.~$\gamma^+_n = \gamma^-_n$) we denote $\gamma_d = \gamma_d^\pm$ (resp.~$\gamma_n = \gamma_n^\pm$).
\subsection{Spherical Harmonics and Projected Basis}
\label{sec:SH}

The standard spherical harmonics functions consist of a family of smooth functions defined in $\mathbb{S}$ given by 
\begin{align*}
Y^l_m(\theta,\varphi) := c_m\sqrt{\frac{(2l+1)(l-|m|)!}{4\pi (l+|m|)!}} \mathrm{P}^l_{|m|}(\cos\varphi)e^{im \theta}, 
\end{align*}
where $c_m = (-1)^m$ if $m<0$ or $c_m=1$ otherwise, with $(\theta,\varphi) \in [0,2\pi]\times [0,\pi]$ are the spherical coordinates on $\mathbb{S}$, $\mathrm{P}^l_m$ denotes the associated Legendre function, and the indices $l \in \IN$, $m \in \mathbb{Z} : |m|\leq l$.
We will write $Y^l_m(\vx)$  to denote the corresponding spherical harmonic evaluated at a point $\vx \in \mathbb{S}$.   
The spherical harmonics form an orthonormal basis of $L^2(\mathbb{S})$ (\emph{cf.~}\cite{macrobert1948spherical} for more details).  

Functions $Y^l_m$ can be projected onto $\mathbb{D}$, this fact being key ingredient of our approximation basis. Let us start by defining two lifting operators on $\cD(\mathbb{D})$, the first is the \emph{even lifting}: 
\begin{align*}
L_e &: \cD(\mathbb{D}) \rightarrow \cD(\mathbb{S}), \\ 
L_e(f)(\vx) &= f(x_1,x_2), \quad \vx \in \mathbb{S} \subset \IR^3,
\end{align*}
using polar-spherical coordinates we see that $L_e(g)(\theta, \varphi) = g(\sin\varphi,\theta)$, where the arguments of $g$ are the distance to the origin and the polar angle on the plane $x_3 =0$. The odd lifting has to be defined on $\cD_0(\mathbb{D})$ the space of smooth functions on $\mathbb{D}$ which vanish on the unitary circle in the plane $x_3=0$. 
\begin{align*}
L_o &: \cD_0(\mathbb{D}) \rightarrow \cD(\mathbb{S}), \\
L_o(f)(\vx) & = \begin{cases} f(x_1,x_2) \quad x_3 \geq 0\\ -f(x_1,x_2) \quad x_3 \leq 0 \end{cases} , \quad \vx \in \mathbb{S} \subset \IR^3,
\end{align*}
Notice that every function in the image of $L_e$ (resp.~$L_o$) is a even (resp.~odd) function on the $x_3$ variable. In particular, the spherical harmonic $Y^l_m$ is even (resp.~odd) when $m+l$ is even (resp.~odd). The definition of the lifting operators can be extended to distributions by duality. In parallel, one can define a projection operator as
\begin{align*}
\Pi_\mathbb{S} &: \cD(\mathbb{S}) \rightarrow \cD(\mathbb{D}), \\
\Pi_\mathbb{S}(f) (\vx) &:= f\left(\vx,  \sqrt{1-x_1^2-x_2^2}\right) \quad \vx \in \mathbb{D} \subset \IR^2,
\end{align*}
which is the inverse $L_e$ (resp.~$L_o$) when restricted to the functions with the corresponding symmetries on $\mathbb{S}$.  Finally, following \cite{wolfe1971,RamaTesis}, the projected basis are defined as 
\begin{align*}
p^l_m(\vx) := \sqrt{2}\Pi_\mathbb{S}(Y^l_m)(\vx),\quad
q^l_m(\vx) := \frac{p^l_m(\vx)}{\sqrt{1-\|\vx\|^2}},
\end{align*}
where $\vx \in \mathbb{D}$. From the orthogonality property of spherical harmonics, it holds that
\begin{align}
\label{eq:diskOrt}
\int_\mathbb{D} p^l_m(\vx) \overline{q^{l'}_{m'}(\vx)} d\vx = \delta_{m,m'}\delta_{l,l'}.
\end{align}
Also, by symmetry of spherical harmonics, one can see that if $m+l$ is even, then $p_m^l$ is a smooth function, while, if $m+l$ is odd, then $q_m^l$ is smooth.

An explicit formula for the  projected basis is given by: 
\begin{align}
\label{eq:projectedexplicit}
p^l_m(r,\theta) = C^l_{m} \mathrm{P}^l_{|m|}(\sqrt{1-r^2})e^{im\theta},
\end{align} 
where $C^l_m  = \sqrt{\frac{(2l+1)(l-|m|)!}{2\pi (l+|m|)!}}$ if $m\geq0$ and $C^l_m = (-1)^m
C^l_{-m}$ if $m < 0$.

 To simplify notations, we sporadically use only one sub-index for the projected basis. For $m+l$ even, we can reorder the basis with a one-dimensional index defined as $$I_e(l,m):=\dfrac{l(l+1)+(l+m)}{2}.$$ The even function indexed in this way will be denoted $p^e_{I_e(l,m)}$ (resp. ~$q^e_{I_e(l,m)}$) whereas for  $m+l$ odd,  we set $$I_o(l,m) = \frac{(l-1)l + m +l-1}{2},$$ with  functions denoted $p^o_{I_o(l,m)}$ (resp.~$q^o_{I_o(l,m)}$). For example, this leads to
\begin{align*}
p^e_0 &= p^0_0, &\  p^e_1 &= p^1_{-1},&\ p^e_2 &= p^1_{1},  &p^e_3 &= p^2_{-2},\\
q^o_0 &= q^1_0 &\  q^o_1 &= q^2_{-1} &\  q^o_2 &= q^2_{1}  &\ q^o_3 &= q^3_{-2}. 
\end{align*}

\begin{lemma} [Proposition 2.1.20 in \cite{SinBrillo}]
\label{lemma:densities}
The following inclusions are dense in the corresponding Sobolev spaces: 
 \begin{align*}
  \spn\{p_l^e\}_{l\in \IN} \subset {H}^{\half}(\mathbb{D}), \quad
   &  \spn\{p_l^o\}_{l\in \IN} \subset \widetilde{H}^{\half}(\mathbb{D}), \\
   \spn\{q_l^e\}_{l\in \IN} \subset \widetilde{H}^{-\half}(\mathbb{D}), \quad 
  &  \spn\{q_l^o\}_{l\in \IN} \subset {H}^{-\half}(\mathbb{D}). 
 \end{align*}
\end{lemma}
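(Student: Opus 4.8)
The plan is to reduce the four inclusions to a single algebraic description of the bases, to handle the two ``$p$''-statements through the sphere, and to deduce the two ``$q$''-statements from them by duality, using the biorthogonality \eqref{eq:diskOrt}. First I would make the basis explicit. Starting from \eqref{eq:projectedexplicit} and the Rodrigues-type identity $\mathrm{P}^l_{|m|}(x)=(1-x^2)^{|m|/2}\frac{d^{|m|}}{dx^{|m|}}P_l(x)$ evaluated at $x=\sqrt{1-r^2}$, so that $1-x^2=r^2$, the radial factor is $r^{|m|}$ times a polynomial in $x$ of degree $l-|m|$ and parity $(-1)^{l-|m|}=(-1)^{l+m}$. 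Hence, when $l+m$ is even, $p^l_m$ is a genuine polynomial in $(x_1,x_2)$ of degree $l$ (using $r^{|m|}e^{im\theta}=(x_1\pm ix_2)^{|m|}$ and a polynomial in $r^2=x_1^2+x_2^2$), while when $l+m$ is odd one gets $p^l_m=\sqrt{1-\|\vx\|^2}\,(\text{polynomial})$, so that $q^l_m$ is a polynomial of degree $l-1$. A dimension count then yields $\spn\{p^e_l\}=\spn\{q^o_l\}=\mathbb{C}[x_1,x_2]$ (for each degree $n$ the relevant index set has cardinality $\binom{n+2}{2}$, and \eqref{eq:diskOrt} supplies linear independence), whence $\spn\{p^o_l\}=\sqrt{1-\|\vx\|^2}\,\mathbb{C}[x_1,x_2]$ and $\spn\{q^e_l\}=(1-\|\vx\|^2)^{-1/2}\mathbb{C}[x_1,x_2]$.

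Next I would treat the two ``$p$''-statements through the lifting operators. The spherical harmonics are dense in $H^{\half}(\mathbb{S})$ and split orthogonally into the even and odd subspaces $H^{\half}_{e/o}(\mathbb{S})$ (parity in $x_3$), spanned respectively by the even/odd $Y^l_m$. Since $\Pi_\mathbb{S}(Y^l_m)=p^l_m/\sqrt2$, it suffices to show that $L_e:H^{\half}(\mathbb{D})\to H^{\half}_e(\mathbb{S})$ and $L_o:\widetilde{H}^{\half}(\mathbb{D})\to H^{\half}_o(\mathbb{S})$ are topological isomorphisms; density then transports to $\{p^e_l\}$ in $H^{\half}(\mathbb{D})$ and to $\{p^o_l\}$ in $\widetilde{H}^{\half}(\mathbb{D})$ (for the first of these, density of polynomials in $H^{\half}(\mathbb{D})$ is in any case classical). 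The even isomorphism is clean: although the vertical projection degenerates at the equator, the reflection quotient of $\mathbb{S}$ under $x_3\mapsto-x_3$ is a smooth manifold-with-boundary whose natural boundary coordinate $x_3^2$ is a smooth nonvanishing multiple of $1-\|\vx\|^2$, so on even functions $\Pi_\mathbb{S}$ is a genuine diffeomorphism onto $\mathbb{D}$ and preserves every $H^s$.

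For the ``$q$''-statements I would argue by duality, recalling $\widetilde{H}^{-\half}(\mathbb{D})=(H^{\half}(\mathbb{D}))'$ and $H^{-\half}(\mathbb{D})=(\widetilde{H}^{\half}(\mathbb{D}))'$. To show $\spn\{q^e_l\}$ dense in $\widetilde{H}^{-\half}(\mathbb{D})$, suppose $v\in H^{\half}(\mathbb{D})$ annihilates every $q^e_l$. For smooth $v$ the pairing is $\int_\mathbb{D} q^e_l\,\overline{v}\,d\vx=\int_\mathbb{D} p^l_m\,\overline{v}\,(1-\|\vx\|^2)^{-1/2}d\vx=\frac{1}{\sqrt2}\int_\mathbb{S} Y^l_m\,\overline{L_e v}\,d\sigma$, where the hemispherical surface measure supplies the weight and the evenness supplies the factor $2$; thus every even spherical coefficient of $L_e v$ vanishes, and since $L_e v$ is even its odd coefficients vanish too, forcing $L_e v=0$ and $v=0$. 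The identity extends to all $v\in H^{\half}(\mathbb{D})$ because $L_e:H^{\half}(\mathbb{D})\to L^2(\mathbb{S})$ is bounded, which follows from the isomorphism above or directly from $H^{\half}(\mathbb{D})\hookrightarrow L^q(\mathbb{D})$ for every $q<\infty$ together with Hölder applied to the weight $(1-\|\vx\|^2)^{-1/2}$. The inclusion $\spn\{q^o_l\}\subset H^{-\half}(\mathbb{D})$ is entirely analogous, testing against $u\in\widetilde{H}^{\half}(\mathbb{D})$, using the odd lift $L_o u$ and the boundedness $L_o:\widetilde{H}^{\half}(\mathbb{D})\to L^2(\mathbb{S})$.

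I expect the crux to be the odd lifting isomorphism $L_o:\widetilde{H}^{\half}(\mathbb{D})\to H^{\half}_o(\mathbb{S})$ at the borderline exponent $s=\half$. In contrast to the even case, odd functions carry the weight $\sqrt{1-\|\vx\|^2}\sim\mathrm{dist}^{1/2}$, and $\widetilde{H}^{\half}(\mathbb{D})$ is precisely the Lions--Magenes space $H^{\half}_{00}$, whose norm registers the weighted term $\int|u|^2/\mathrm{dist}$; matching this with the intrinsic $H^{\half}(\mathbb{S})$-norm across the square-root change of normal variable $t\mapsto t^2\sim 1-\|\vx\|^2$ is where the genuine work lies. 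Parity is what rescues the argument---an odd smooth function equals $t$ times an even smooth function, so in the quotient coordinate the square-root disappears and one lands exactly in $\sqrt{1-\|\vx\|^2}\,C^\infty(\mathbb{D})$---but making this quantitative at $s=\half$, rather than merely for smooth data, demands the careful weighted-norm equivalence, and that is the step I would expect to dominate the proof.
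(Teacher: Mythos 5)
The paper does not actually prove this lemma---it is imported verbatim as Proposition 2.1.20 of \cite{SinBrillo}---so there is no internal proof to compare against; judged on its own terms, your proposal has a genuine gap at exactly the step you flag as the crux. The good parts first: the identification $\spn\{p^e_l\}=\spn\{q^o_l\}=\mathbb{C}[x_1,x_2]$, $\spn\{p^o_l\}=w_\mathbb{D}\,\mathbb{C}[x_1,x_2]$, $\spn\{q^e_l\}=w_\mathbb{D}^{-1}\mathbb{C}[x_1,x_2]$ is correct; the first inclusion does reduce to density of polynomials in $H^{\half}(\mathbb{D})$; and your duality arguments for the two $q$-inclusions are essentially sound (for $\spn\{q^o_l\}$ you do not even need the sphere: these are all polynomials, dense in $L^2(\mathbb{D})$, which embeds densely in $H^{-\half}(\mathbb{D})$). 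The problem is the second inclusion. You propose to transport density through the claim that $L_o:\widetilde{H}^{\half}(\mathbb{D})\to H^{\half}_o(\mathbb{S})$ is a topological isomorphism, and along the way assert that $\Pi_\mathbb{S}$ ``preserves every $H^s$'' on even functions. Both claims are false, and the paper itself says so: \eqref{lemsobRev1} gives $P^0_e(\mathbb{D})=L^2_{1/w_\mathbb{D}}(\mathbb{D})\neq L^2(\mathbb{D})$, so the even lifting already fails to preserve $H^0$; and the diagonalization \eqref{eq:OpEigens} together with Gautschi's estimate \eqref{eq:LambdaEst} gives $\|u\|^2_{\widetilde{H}^{\half}(\mathbb{D})}\cong\sum_{l,m}\Lambda_{l,m}^{-1}|u^l_m|^2$ for $u=\sum u^l_m p^l_m$, where $\Lambda_{l,m}^{-1}$ genuinely ranges between $(l+1)^{\half}$ (for $|m|$ near $l$) and $l+1$ (for $|m|$ near $0$). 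Hence $\widetilde{H}^{\half}(\mathbb{D})$ coincides with no $P^s_o(\mathbb{D})$; coefficients supported on $m=l-1$ with $|u^l_{l-1}|^2=l^{-3/2}(\log l)^{-2}$ give $u\in\widetilde{H}^{\half}(\mathbb{D})$ with $L_o u\notin H^{\half}(\mathbb{S})$, so $L_o$ is not even bounded into $H^{\half}_o(\mathbb{S})$, let alone an isomorphism. This mismatch is precisely why Lemma \ref{lemma:sobRev} only delivers the sandwich $P^{\half}_o\subset\widetilde{H}^{\half}\subset P^{\fourth}_o$ and why Theorem \ref{thrm:rateConvergence} loses a quarter power in the rate.

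The gap cannot be filled as stated, but it is easily repaired with tools the paper already uses in the appendix: $W_\mathbb{D}:\widetilde{H}^{\half}(\mathbb{D})\to H^{-\half}(\mathbb{D})$ is a continuous elliptic (hence, by Lax--Milgram, invertible) operator with $W_\mathbb{D}p^l_m=\Lambda_{l,m}^{-1}q^l_m$, so density of $\spn\{p^o_l\}$ in $\widetilde{H}^{\half}(\mathbb{D})$ is \emph{equivalent} to density of $\spn\{q^o_l\}$ (all polynomials) in $H^{-\half}(\mathbb{D})$, which you already have; dually, $V_\mathbb{D}$ reduces the $\spn\{q^e_l\}$ statement to density of polynomials in $H^{\half}(\mathbb{D})$. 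Replacing your lifting-isomorphism step by this operator transport closes the argument without any claim about how $L_e$ or $L_o$ act on Sobolev scales.
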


\subsection{Auxiliary Functional Spaces}
Classically, Sobolev spaces on $\mathbb{S}$ can be defined in terms of functions expressed as an expansion of spherical harmonics in the following way (\emph{cf.}~\cite{Pham:2011} or \cite[Chapter 7]{atkinson2001theoretical}): 
\begin{align*}
H^s(\mathbb{S}) := \left\{ u \in \mathcal{D}'(\mathbb{S}) : 
\| u\|_{H^s(\mathbb{S})}^2 := \sum_{l =0}^\infty (l+1)^{2s} \sum_{m=-l}^l |\langle u, Y^l_m\rangle |^2 < \infty \right\}, \quad s \in \IR,
\end{align*}
where $\langle u, Y^l_m\rangle$ is the extension by duality of the $L^2(\mathbb{S})$-inner product.  From this definition, given $u \in H^s(\mathbb{S})$ and $N \in \IN$,  we can define a finite-dimensional projection of $u$ as 
\begin{align*}
\Pi_N u = \sum_{l=0}^N \sum_{m=-l}^l \langle u, Y^l_m \rangle Y^l_m . 
\end{align*}
and using elementary properties we can bound the error of this projection as: 
\begin{align*}
\| u -u_N\|_{H^s(\mathbb{S})} \leq (N+1)^{s-r} \|u \|_{H^r(\mathbb{S})} ,
\end{align*}
 for any reals $s,r$ such that $s<r$. The space $H^s_e(\mathbb{S})$ (resp. $H^s_o(\mathbb{S})$) is defined as the completion of the even (resp.~odd) functions in the $x_3$ variable in $\cD(\mathbb{S})$, with the same norm. For these cases, the norms can be written as 
\begin{align*}
\|u \|_{H^s_e(\mathbb{S})}^2 = \sum_{l=0}^\infty (n+1)^{2s} \sum_{\substack{m=-l\\ m+l \text{ even}}}^l |\langle u , Y^l_m \rangle|^2, \\
\|u \|_{H^s_o(\mathbb{S})}^2 = \sum_{l=0}^\infty (n+1)^{2s} \sum_{\substack{m=-l\\ m+l \text{ odd}}}^l |\langle u , Y^l_m \rangle|^2.
\end{align*}
We will also consider the special function $w_\mathbb{S}:=\sqrt{1-x_1^2-x_2^2}$ defined in $\mathbb{S}$. In spherical coordinates can be written as $w_\mathbb{S} = |\cos\varphi|$.  
\begin{remark}
As the function $w_\mathbb{S}$ do not depend on the $x_3$ coordinate, it holds that
\begin{align*}
w_\mathbb{S} L_e(u) = L_e(w_\mathbb{D} u), \quad 
w_\mathbb{S} L_o(u) = L_o(w_\mathbb{D} u),
\end{align*}
where $w_\mathbb{D}(\vx)= \sqrt{1-\|\vx\|^2}$, for $\vx \in \mathbb{D}$.  
\end{remark} 

Let us introduce two families of auxiliary spaces on $\mathbb{D}$ associated with even functions: 
\begin{align*}
P^s_e(\mathbb{D}) &:=\{ u \in \cD'(\mathbb{D}) : L_e(u) \in H^s_e(\mathbb{S}) \} \\
Q^s_e(\mathbb{D}) &:= \{u \in \cD'(\mathbb{D}) : w_\mathbb{S} L_e(u) \in H^s_e(\mathbb{S}) \},
\end{align*}
with norms given by: 
\begin{align*}
\|u\|_{P^s_e(\mathbb{D})} &:= \| L_e(u) \|_{H^s_e(\mathbb{S})} = \left( \sum_{l=0}^\infty (n+1)^{2s} \sum_{\substack{m=-l\\ m+l \text{ even}}}^l |\langle u , q^l_m \rangle_{\mathbb{D}}|^2
\right) ^{\half},\\
\|u\|_{Q^s_e(\mathbb{D})} &:= \| w_\mathbb{S}L_e(u) \|_{H^s_e(\mathbb{S})} = \left( \sum_{l=0}^\infty (n+1)^{2s} \sum_{\substack{m=-l\\ m+l \text{ even}}}^l |\langle u , p^l_m \rangle_{\mathbb{D}}|^2
\right) ^{\half}.
\end{align*}
Odd function spaces $P^s_o(\mathbb{D})$, $Q^s_o(\mathbb{D})$ are defined in a similar fashion. While the connection with standard spaces is not as direct as the definition suggests, the next Lemma will be useful.\footnote{The proof is given in Appendix \ref{proof:lemma:sobRev}.}
\begin{lemma} 
\label{lemma:sobRev}
The following relations between auxiliary and classical spaces on $\mathbb{D}$ holds: 
\begin{align}
\label{lemsobRev1}P^0_e(\mathbb{D}) & = L^2_{1/w_\mathbb{D}}(\mathbb{D}):= \left\{  u : \int_\mathbb{D}  \frac{u\overline{u}}{w_\mathbb{D}} d\vx < \infty \right\}\\
\label{lemsobRev2}Q^{-\fourth}_e(\mathbb{D}) &\subset \widetilde{H}^{-\half}(\mathbb{D}) \subset Q^{-\half}_e(\mathbb{D})\\
\label{lemsobRev3}P^{\half}_o(\mathbb{D}) &\subset \widetilde{H}^{\half}(\mathbb{D}) \subset P^{\fourth}_o(\mathbb{D}). 
\end{align}
with continuous inclusions. 
\end{lemma}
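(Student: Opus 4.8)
The plan is to treat the three relations \eqref{lemsobRev1}--\eqref{lemsobRev3} in turn, reducing each to a comparison of norms and exploiting the biorthogonality \eqref{eq:diskOrt} to pass freely between the $P$- and $Q$-scales.

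I would first settle \eqref{lemsobRev1} by a direct change of variables. Parametrising the upper hemisphere of $\mathbb{S}$ through the inverse of $\Pi_\mathbb{S}$, the spherical surface measure pushes forward to $d\vx/w_\mathbb{D}$ on $\mathbb{D}$; since $H^0_e(\mathbb{S})=L^2_e(\mathbb{S})$ and $|L_e(u)|^2$ is even in $x_3$, this gives $\|u\|^2_{P^0_e(\mathbb{D})}=\|L_e(u)\|^2_{L^2(\mathbb{S})}=2\int_\mathbb{D}|u|^2/w_\mathbb{D}\,d\vx$, i.e.~$P^0_e(\mathbb{D})=L^2_{1/w_\mathbb{D}}(\mathbb{D})$ as sets with equivalent norms. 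The same hemisphere computation, together with the identity $w_\mathbb{S}L_e(u)=L_e(w_\mathbb{D}u)$ and its odd counterpart, identifies the coefficients entering the $P$- and $Q$-norms with the biorthogonal pairings $\dual{u}{q^l_m}_\mathbb{D}$ and $\dual{u}{p^l_m}_\mathbb{D}$, up to the harmless factor $\sqrt{2}$. Note that the bases appearing in \eqref{lemsobRev2}--\eqref{lemsobRev3} are exactly the \emph{smooth} ones (the $p^l_m$ for even $l+m$ governing $Q^s_e$, the $q^l_m$ for odd $l+m$ governing $P^s_o$), which is what makes the two statements tractable.

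These same pairings furnish a duality that organises the bookkeeping: expanding in the two dual families yields $\dual{u}{v}_\mathbb{D}=\sum_{l,m}\dual{u}{q^l_m}_\mathbb{D}\,\overline{\dual{v}{p^l_m}_\mathbb{D}}$, whence Cauchy--Schwarz gives $(P^s_e)'=Q^{-s}_e$ and $(P^s_o)'=Q^{-s}_o$ for the $L^2(\mathbb{D})$ pairing. Since the excerpt identifies $(\wH^{\pm\half}(\mathbb{D}))'=H^{\mp\half}(\mathbb{D})$, each inclusion in \eqref{lemsobRev2}--\eqref{lemsobRev3} may be transposed into a companion statement on the dual scale, so I am free to prove whichever of the two endpoint inclusions in each nested pair is more convenient and recover the other, or to cross-check against the $H^{\pm\half}(\mathbb{D})$ companions.

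The analytic heart, and the step I expect to be the main obstacle, is the sharp two-sided comparison between the spherical weighting $(l+1)^{2s}$ and the classical fractional norm on $\mathbb{D}$ at the half-integer level. Using \eqref{eq:projectedexplicit} I would separate the Fourier variable $\theta$ and pass to the radial variable $t=\sqrt{1-r^2}$, in which $d\vx/w_\mathbb{D}$ becomes the flat measure $dt\,d\theta$ and the functions $\mathrm{P}^l_{|m|}(t)e^{im\theta}$ are orthogonal; this reduces the matter, for each angular frequency $m$, to a one-dimensional statement about associated-Legendre (Jacobi) expansions, for which weighted-Sobolev coefficient characterisations are available. The obstruction to an exact identification is that the base level here is the weighted space $L^2_{1/w_\mathbb{D}}(\mathbb{D})$ of \eqref{lemsobRev1}, not the unweighted $L^2(\mathbb{D})$ underlying the $\wH^{s}(\mathbb{D})$ scale: the map $t=\sqrt{1-r^2}$ degenerates like a square root at the edge $r=1$, so interpolating the spherical scale against the standard Sobolev scale on $\mathbb{D}$ (accessed through Lemma \ref{lemma:SobolevLocalization}) produces a genuine index loss rather than a match. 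Quantifying this loss through Hardy-type inequalities and the $\sqrt{\mathrm{dist}}$ edge behaviour is precisely what yields the gaps $(-\fourth,-\half)$ in \eqref{lemsobRev2} and $(\fourth,\half)$ in \eqref{lemsobRev3}. Establishing one endpoint inclusion in each nested pair from this radial estimate, transposing the other by the duality above, and observing that every comparison passes through bounded coefficient multipliers then completes the argument and delivers the continuity of all inclusions.
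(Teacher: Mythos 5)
Your treatment of \eqref{lemsobRev1} is fine and matches the paper in substance (the push-forward of the spherical measure to $d\vx/w_\mathbb{D}$ is just the orthogonality relation \eqref{eq:diskOrt} in disguise). But for \eqref{lemsobRev2}--\eqref{lemsobRev3} there is a genuine gap at exactly the point you yourself flag as ``the analytic heart'': the two-sided comparison between the weighting $(l+1)^{2s}$ and the $\widetilde{H}^{\pm\half}(\mathbb{D})$ norms is asserted via an appeal to unspecified ``weighted-Sobolev coefficient characterisations'' for Jacobi expansions and ``Hardy-type inequalities,'' with no concrete mechanism that produces the specific indices $-\fourth$ versus $-\half$ (resp.\ $\half$ versus $\fourth$). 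The paper closes this gap with a tool your proposal never touches: the weakly- and hyper-singular operators $V_\mathbb{D}$, $W_\mathbb{D}$ on the disk are elliptic and continuous, so $\langle V_\mathbb{D}u,u\rangle_\mathbb{D}$ and $\langle W_\mathbb{D}u,u\rangle_\mathbb{D}$ \emph{are} (equivalent to) the squared $\widetilde{H}^{-\half}$ and $\widetilde{H}^{\half}$ norms, and these operators are explicitly diagonal in the bases $q^l_m$, $p^l_m$ with eigenvalues $\Lambda_{l,m}$ given by Gamma-function ratios. Gautschi's inequality yields $(l+1)^{-1}\lesssim\Lambda_{l,m}\lesssim(l+1)^{-\half}$, and it is precisely the $m$-dependence of $\Lambda_{l,m}\sim(l^2-m^2)^{-\half}$ (saturating the lower bound near $m=0$ and the upper bound near $|m|=l$) that forces a sandwich of width $\fourth$ in the index rather than an exact identification. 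Your radial reduction ``for each angular frequency $m$'' would have to reproduce exactly this anisotropy, and nothing in the proposal does so.

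A secondary issue is the duality bookkeeping. Transposing $Q_e^{-\fourth}(\mathbb{D})\subset\widetilde{H}^{-\half}(\mathbb{D})$ gives $H^{\half}(\mathbb{D})\subset P_e^{\fourth}(\mathbb{D})$, i.e.\ a statement of the \emph{Corollary} (even spaces), not the other endpoint of \eqref{lemsobRev2}; likewise the dual of the even scale is the even scale, so no duality argument carries \eqref{lemsobRev2} to the odd-parity chain \eqref{lemsobRev3}. You would therefore still need an independent argument for the hyper-singular/odd case, which in the paper is supplied by the second identity in \eqref{eq:OpEigens}. The duality you set up is correct and does halve the total work across the Lemma and its Corollary, but it does not let you ``recover the other'' endpoint inclusion within a single nested pair as claimed.
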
 

\begin{corollary}
The following inclusions are continuous: 
\begin{align}
Q_o^{-\fourth}(\mathbb{D}) &\subset H^{-\half}(\mathbb{D}) \subset Q_o^{-\half}(\mathbb{D})\\
P_e^{\half}(\mathbb{D}) &\subset H^{\half}(\mathbb{D}) \subset P_e^{\fourth}(\mathbb{D}).
\end{align}
\end{corollary}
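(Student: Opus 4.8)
The plan is to deduce the Corollary from Lemma \ref{lemma:sobRev} by a duality argument resting on two facts. First, the auxiliary spaces $P^s_e(\mathbb{D})$ and $Q^{-s}_e(\mathbb{D})$ (and likewise $P^s_o(\mathbb{D})$ and $Q^{-s}_o(\mathbb{D})$) are mutually dual with respect to a pairing extending the $L^2(\mathbb{D})$ inner product. Second, the standard identification $(H^s(\mathbb{D}))' = \widetilde{H}^{-s}(\mathbb{D})$ recorded earlier in the excerpt. Granting these, the chains \eqref{lemsobRev2} and \eqref{lemsobRev3} of the Lemma dualize term-by-term into exactly the two chains of the Corollary, since taking duals reverses inclusions and sends $\widetilde{H}^{-\half}\mapsto H^{\half}$, $\widetilde{H}^{\half}\mapsto H^{-\half}$, $Q^{-s}_e\mapsto P^s_e$, and $P^s_o\mapsto Q^{-s}_o$.

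The first preparatory step is to establish the $P$--$Q$ duality. For $u,v\in\cD(\mathbb{D})$ I would compute the spherical pairing $\langle L_e(u), w_\mathbb{S} L_e(v)\rangle_\mathbb{S}$ by reducing it to the disk: the integrand is even in $x_3$, the upper hemisphere carries surface measure $d\sigma = d\vx/w_\mathbb{D}$, one has $\Pi_\mathbb{S}(w_\mathbb{S} L_e(v)) = w_\mathbb{D} v$ by the Remark, and $\Pi_\mathbb{S} L_e = \mathrm{Id}$ on even functions; these combine to give $\langle L_e(u), w_\mathbb{S} L_e(v)\rangle_\mathbb{S} = 2\langle u, v\rangle_{L^2(\mathbb{D})}$. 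Since by construction $L_e:P^s_e(\mathbb{D})\to H^s_e(\mathbb{S})$ and $v\mapsto w_\mathbb{S}L_e(v):Q^{-s}_e(\mathbb{D})\to H^{-s}_e(\mathbb{S})$ are isometric isomorphisms, and $H^s_e(\mathbb{S})$, $H^{-s}_e(\mathbb{S})$ are dual through the extended $L^2(\mathbb{S})$ pairing, this identity transports that duality to $\mathbb{D}$ and shows $P^s_e(\mathbb{D})$, $Q^{-s}_e(\mathbb{D})$ are mutually dual under the $L^2(\mathbb{D})$ pairing. The identical computation with $L_o$ in place of $L_e$ (the integrand $L_o(u)\,w_\mathbb{S}\,\overline{L_o(v)}$ is again even in $x_3$) yields the mutual duality of $P^s_o(\mathbb{D})$ and $Q^{-s}_o(\mathbb{D})$.

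Next I would carry out the dualization, for which the inclusions being dualized must have dense image. Here Lemma \ref{lemma:densities} furnishes a common dense core: $\spn\{q^l_m : m+l \text{ even}\}$ lies in and is dense in each of $Q^{-\fourth}_e(\mathbb{D})$, $\widetilde{H}^{-\half}(\mathbb{D})$ and $Q^{-\half}_e(\mathbb{D})$, so both inclusions in \eqref{lemsobRev2} are dense; similarly $\spn\{p^l_m : m+l \text{ odd}\}$ is a dense core for the three spaces in \eqref{lemsobRev3}. Taking transposes of \eqref{lemsobRev2} and invoking the two duality dictionaries produces $P^{\half}_e(\mathbb{D}) \subset H^{\half}(\mathbb{D}) \subset P^{\fourth}_e(\mathbb{D})$, and transposing \eqref{lemsobRev3} produces $Q^{-\fourth}_o(\mathbb{D}) \subset H^{-\half}(\mathbb{D}) \subset Q^{-\half}_o(\mathbb{D})$, which are precisely the two assertions. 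Because all pairings involved — the $P$--$Q$ pairing, the $H$--$\widetilde{H}$ duality, and $L^2(\mathbb{D})$ — agree up to the harmless factor $2$ on the dense set $\cD(\mathbb{D})$, the abstract transposes coincide with the concrete inclusions claimed, with continuity constants inherited from the Lemma.

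The main obstacle I anticipate is bookkeeping rather than analysis: one must verify that the three a priori distinct duality pairings are genuinely the same bilinear form on the common dense core, so that each transpose is literally the stated inclusion and not merely an abstractly isomorphic embedding, and one must confirm the density statements before dualizing. Once the pairing identity $\langle L_e(u), w_\mathbb{S} L_e(v)\rangle_\mathbb{S} = 2\langle u,v\rangle_{L^2(\mathbb{D})}$ and its odd counterpart are in hand, the remainder is the routine ``reverse-the-arrows'' passage to duals.
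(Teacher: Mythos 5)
Your argument is correct and is exactly the route the paper takes: its proof consists of the single sentence that both chains are ``immediate consequences of the duality relation between classical Sobolev spaces and Lemma \ref{lemma:sobRev}.'' What you have done is supply the details the paper leaves implicit --- the $P^s$--$Q^{-s}$ duality via the pairing identity $\langle L_e(u), w_\mathbb{S} L_e(v)\rangle_\mathbb{S} = 2\langle u,v\rangle_{L^2(\mathbb{D})}$, the density needed to transpose the inclusions, and the check that all pairings agree on the common core --- and each of these steps is sound.
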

\begin{proof}
Both results are immediate consequences of the duality relation between classical Sobolev spaces and Lemma \ref{lemma:sobRev}. 
\end{proof}

By definition of the auxiliary spaces as well the projector $\Pi_N$, we can introduce four projectors onto the disk. Namely, 
\begin{align*} 
\Pi^{e,p}_N : \sum_{l =0}^\infty  u_l p^e_l \mapsto 
\sum_{l =0}^{I(N)}    u_l p^e_l ,\quad &
\Pi^{o,p}_N : \sum_{l =0}^\infty  u_l p^o_l \mapsto 
\sum_{l =0}^{I(N)}    u_l p^o_l ,\\
\Pi^{e,q}_N : \sum_{l =0}^\infty  u_l q^e_l \mapsto 
\sum_{l =0}^{I(N)}    u_l q^e_l ,\quad &
\Pi^{o,q}_N : \sum_{l =0}^\infty  u_l q^o_l \mapsto 
\sum_{l =0}^{I(N)}    u_l q^o_l,
\end{align*}
where $I(N)= I_e(N,N) = \frac{N(N+1)}{2} +N$. Using the norm definitions on the corresponding spaces, it holds that
\begin{align} 
\label{eq:ProjEstimations}
\| \Pi^{e,p}_N u - u \|_{P^s_e(\mathbb{D})} \leq (N+1)^{s-r} \| u \|_{P^r_e(\mathbb{D})}, \\
\| \Pi^{o,p}_N u - u \|_{P^s_o(\mathbb{D})} \leq (N+1)^{s-r} \| u \|_{P^r_o(\mathbb{D})}, \\
\| \Pi^{e,q}_N u - u \|_{Q^s_e(\mathbb{D})} \leq (N+1)^{s-r} \| u \|_{Q^r_e(\mathbb{D})}, \\
\| \Pi^{o,q}_N u - u \|_{Q^s_e(\mathbb{D})} \leq (N+1)^{s-r} \| u \|_{Q^r_o(\mathbb{D})},
\end{align}
for any reals $s,r$ such that $s< r$.

\section{Boundary Integral Formulation}
We now reduce the original problem to the screen.

\subsection{Boundary Integral Operators}
\label{sec:BIOS}
Let us recall the definitions of the single and double layer potentials: 
\begin{align*}
(\mathcal{S}_\Gamma[k]\lambda)(\vx) := \int_\Gamma G_k(\vx,\vx')\lambda(\vx')d\vx', \quad \vx\in \IR^3\setminus \overline{\Gamma}, \\
(\mathcal{D}_\Gamma[k]\nu)(\vx) := \int_\Gamma \gamma_{n,\vx'}G_k(\vx,\vx')\nu(\vx')d\vx', \quad \vx\in \IR^3\setminus \overline{\Gamma}, 
\end{align*}
respectively, where $$G_k(\vx,\vx') = \frac{e^{ik\|\vx-\vx'\|}}{4 \pi \| \vx-\vx'\|}$$ denotes the Helmholtz fundamental solution and $\gamma_{n,\vx'}$ corresponds to the Neumann trace in the specified variable. 

Following the clasical formulation of Dirichlet (resp.~Neumann) problems, we can seek for solutions of the form $u(\vx) = (\mathcal{S}_\Gamma[k]\lambda)(\vx)$ (resp. $u(\vx) = (\mathcal{D}_\Gamma[k]\nu)(\vx)$), where $\lambda$ (resp.~$\nu$) is an unknown density defined on $\Gamma$. By taking traces, one naturally defines the weakly- and hyper-singular boundary integral operators (BIOs),
\begin{align*}
(V_\Gamma [k] \lambda)(\vx)   &:= \gamma_d (\mathcal{S}_\Gamma[k]\lambda)(\vx) = \int_\Gamma G_k(\vx,\vx')\lambda(\vx')d\vx', \quad x\in \Gamma, \\
(W_\Gamma [k] \nu)(\vx)   &:= -\gamma_n (\mathcal{D}_\Gamma[k]\nu)(\vx) = - \gamma_{n}\int_\Gamma  \gamma_{n,\vx'}G_k(\vx,\vx')\nu(\vx')d\vx', \quad x\in \Gamma, 
\end{align*}
where first BIO is defined as a Lebesgue integral, while the second one is understood as a principal value \cite[Chapter 5]{mclean2000strongly}. 
\subsection{Boundary Integral Equations}
With the above definitions, Problem \ref{prob:edp} can be reduced to 

\begin{problem}[BIEs]\label{prob:BIEs}
For $k\geq0$, find $\lambda, \nu$ $\in \widetilde{H}^{-1/2}(\Gamma)\times \widetilde{H}^{1/2}(\Gamma)$ such that 
\begin{align}
\label{eq:bies}
V_\Gamma[k] \lambda  = g_d, \quad \text{(Dirichlet BIE)},\\
W_\Gamma[k] \nu  = g_n, \quad \text{(Neumann BIE)}.
\end{align}
\end{problem}

The equivalence between these BIEs and their corresponding original problems is established in \cite{Stephan86}. 

\begin{theorem}[Theorem 2.7, Lemma 2.8, in \cite{STE86}, Theorem 2.1.60  in  \cite{Sauter:2011}]
For any $k \geq 0$, $g_d \in H^{\half}(\Gamma)$ (resp.~$g_n \in H^{-\half}(\Gamma)$), there exists one solution 
$\lambda \in \widetilde{H}^{-\half}(\Gamma)$ (resp.~$\nu \in \widetilde{H}^{\half}(\Gamma)$) for Problem \ref{prob:BIEs}. Moreover, the solution operators are bounded:
\begin{align*}
\| \lambda \|_{\widetilde{H}^{-\half}(\Gamma)} \lesssim \| g_d\|_{H^{\half}(\Gamma)}, \quad
\| \nu \|_{\widetilde{H}^{\half}(\Gamma)} \lesssim \| g_n\|_{H^{-\half}(\Gamma)},  
\end{align*}
with unspecified constants depending on $\Gamma$ and $k$.  
\end{theorem}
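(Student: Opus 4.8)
The plan is to treat each equation variationally in its natural energy space and to combine continuity, ellipticity, and Fredholm theory. For the Dirichlet BIE I would work in $\widetilde{H}^{-\half}(\Gamma)$, whose dual is exactly the space $H^{\half}(\Gamma)$ in which $g_d$ lives, and set $a_V(\lambda,\mu) := \langle V_\Gamma[k]\lambda,\mu\rangle_\Gamma$; for the Neumann BIE I would work in $\widetilde{H}^{\half}(\Gamma)$, dual to $H^{-\half}(\Gamma)$, with $a_W(\nu,\mu):=\langle W_\Gamma[k]\nu,\mu\rangle_\Gamma$. Continuity of both forms is the standard mapping property of the weakly- and hyper-singular operators $V_\Gamma[k]:\widetilde{H}^{-\half}(\Gamma)\to H^{\half}(\Gamma)$ and $W_\Gamma[k]:\widetilde{H}^{\half}(\Gamma)\to H^{-\half}(\Gamma)$ (\emph{cf.}~\cite[Chap.~6--7]{mclean2000strongly}), so existence and the asserted bounds reduce to a coercivity statement.

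First I would settle the Laplace case $k=0$. Writing $u=\mathcal{S}_\Gamma[0]\lambda$ and $v=\mathcal{D}_\Gamma[0]\nu$, the jump relations identify $\lambda$ with the jump of the Neumann trace of $u$ and $\nu$ with the jump of the Dirichlet trace of $v$, and integration by parts together with the decay at infinity gives the energy identities $\langle V_\Gamma[0]\lambda,\lambda\rangle_\Gamma=\int_{\IR^3}\|\nabla u\|^2\,\d\vx$ and $\langle W_\Gamma[0]\nu,\nu\rangle_\Gamma=\int_{\IR^3}\|\nabla v\|^2\,\d\vx$. A trace inequality on $\IR^3\setminus\overline{\Gamma}$ then bounds these energies from below by $\|\lambda\|^2_{\widetilde{H}^{-\half}(\Gamma)}$ and $\|\nu\|^2_{\widetilde{H}^{\half}(\Gamma)}$, respectively; the support condition built into the $\widetilde{H}$ spaces is exactly what rules out any boundary-supported null space. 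Lax--Milgram then yields unique solvability and the stated bounds for $k=0$.

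For $k>0$ I would use a compact perturbation argument. The kernel difference $G_k-G_0=(e^{ik\|\vx-\vx'\|}-1)/(4\pi\|\vx-\vx'\|)$ is bounded and real-analytic across the diagonal, so $V_\Gamma[k]-V_\Gamma[0]$ and likewise $W_\Gamma[k]-W_\Gamma[0]$ are smoothing, hence compact, on the screen spaces. Combined with the $k=0$ ellipticity this furnishes a G\aa rding inequality for both forms, so Fredholm theory reduces existence to injectivity. Injectivity follows from the equivalence with Problem \ref{prob:edp}: if $V_\Gamma[k]\lambda=0$, then $\mathcal{S}_\Gamma[k]\lambda$ is a radiating solution of the homogeneous Helmholtz equation with vanishing two-sided Dirichlet trace, and since the complement of a screen is connected with no bounded interior there are no spurious resonances, so the potential vanishes identically and the Neumann jump forces $\lambda=0$; the hyper-singular case is analogous with the roles of the two traces exchanged. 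The bounded inverse theorem then upgrades injectivity-plus-surjectivity to the claimed a priori estimates, with constants depending on $\Gamma$ and $k$.

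The step I expect to be the main obstacle is the ellipticity in the screen setting, i.e.\ the lower bounds $\langle V_\Gamma[0]\lambda,\lambda\rangle_\Gamma\gtrsim\|\lambda\|^2_{\widetilde{H}^{-\half}(\Gamma)}$ and $\langle W_\Gamma[0]\nu,\nu\rangle_\Gamma\gtrsim\|\nu\|^2_{\widetilde{H}^{\half}(\Gamma)}$: on a closed surface these are classical, but for an open $\Gamma$ one must realize the forms as Dirichlet energies on the non-Lipschitz domain $\IR^3\setminus\overline{\Gamma}$ and control those energies from below by the $\widetilde{H}^{\pm\half}$ trace norms, carefully exploiting the support and extension-by-zero structure of the $\widetilde{H}$ spaces. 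This is precisely the content of the cited results of Stephan, so in practice I would invoke them rather than reprove the trace estimates from scratch.
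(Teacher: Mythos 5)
Your outline is correct and coincides with the standard argument: the paper does not prove this theorem itself but imports it from the cited references (Stephan, and Sauter--Schwab), and the route you describe --- continuity plus $\widetilde{H}^{\pm\half}$-ellipticity of $V_\Gamma[0]$, $W_\Gamma[0]$ via the energy identities for the layer potentials, then compact perturbation, G\aa rding, Fredholm, and injectivity from uniqueness of the radiating exterior problem (no interior resonances for a screen) --- is exactly the proof given in those sources. You correctly identify the screen-specific ellipticity estimates as the only nontrivial ingredient and, as the paper does, defer them to Stephan's results.
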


\begin{theorem}[Corollary A.4 in \cite{costabel2003}]
\label{thrm:solreg}
Given $g_d$, $g_n$ smooth functions on $\Gamma$, the pulled-back solutions multiplied by the corresponding weight functions, $(\lambda  \circ \vr) w_\mathbb{D}$ and $(\nu \circ \vr)(w_\mathbb{D})^{-1}$, are as smooth as $g_d \circ \vr$, and $g_n \circ \vr$, respectively. 
\end{theorem}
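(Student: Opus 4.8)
The plan is to reduce the claim to the model regularity result of \cite{costabel2003} and then bridge the two differences between that setting and ours: the Helmholtz kernel (rather than the Laplace one) and the general analytic parametrization $\vr$. Since smoothness is a local property on $\mathbb{D}$, I would first introduce a partition of unity splitting $\mathbb{D}$ into (i) a region compactly contained in the open disk and (ii) a collar neighborhood of the boundary circle $\|\vx\|=1$, whose image under $\vr$ is a neighborhood of the edge $\partial\Gamma$. On the interior region, $V_\Gamma[k]$ and $W_\Gamma[k]$ act as elliptic pseudodifferential operators of order $-1$ and $+1$, so interior elliptic regularity forces $\lambda\circ\vr$ and $\nu\circ\vr$ to be as smooth as the data there; because $w_\mathbb{D}=\sqrt{1-\|\vx\|^2}$ and its reciprocal are analytic and nonvanishing away from the circle, multiplication by them is harmless. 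Hence the entire content of the theorem is localized in the collar.

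Next I would eliminate the dependence on $k$. Writing $G_k-G_0=(e^{ik\|\vx-\vx'\|}-1)/(4\pi\|\vx-\vx'\|)$, the cancellation of the $\|\vx-\vx'\|^{-1}$ factor makes this difference kernel analytic across the diagonal, so $V_\Gamma[k]-V_\Gamma[0]$ and $W_\Gamma[k]-W_\Gamma[0]$ are smoothing. Recasting the equations as $V_\Gamma[0]\lambda=g_d-(V_\Gamma[k]-V_\Gamma[0])\lambda$ and similarly for $\nu$, the right-hand sides remain as regular as the original data, and the edge-singularity structure of $\lambda$ and $\nu$ is therefore governed entirely by the Laplace operators, to which the cited edge-asymptotic analysis applies.

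The core step is transferring the flat-disk result of \cite{costabel2003} to our analytically parametrized screen. Using that $\vr$ is $\rho$-analytic with full-rank gradient and that $\Gamma\subset\widetilde{\Gamma}$ with $\widetilde{\Gamma}$ a smooth closed surface, the pullbacks of the Laplace operators through $\vr$ coincide near the edge with the model Wiener--Hopf/Mellin symbols up to analytic perturbations, so the leading edge behaviors are $d^{-1/2}$ for $\lambda\circ\vr$ and $d^{1/2}$ for $\nu\circ\vr$, where $d$ denotes the distance to the circle $\|\vx\|=1$. Since in this collar $w_\mathbb{D}$ behaves like $\sqrt{2d}$, the products $(\lambda\circ\vr)\,w_\mathbb{D}$ and $(\nu\circ\vr)\,w_\mathbb{D}^{-1}$ exactly cancel the half-integer singular factors, and Corollary A.4 of \cite{costabel2003} then yields that the resulting functions inherit the full regularity of $g_d\circ\vr$ and $g_n\circ\vr$.

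I expect the main obstacle to be this last transfer step: verifying that the change of variables by the analytic $\vr$ deposits the weighted solution into precisely the function class treated in \cite{costabel2003} without degrading regularity. The delicate point is that $w_\mathbb{D}$ is tied to the Euclidean geometry of the disk, so one must check that the analytic Jacobian factors and the pulled-back surface measure do not reintroduce an incompatible power of $d$. This reduces to confirming that $\vr$ maps $\|\vx\|=1$ onto $\partial\Gamma$ transversally and, via the assumption that $J_\vr$ is finite and nonvanishing near the edge, that the edge-distances on $\mathbb{D}$ and on $\Gamma$ are comparable, so that the half-power weight matches the singularity uniformly along the boundary circle.
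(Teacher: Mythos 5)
The paper does not prove this statement at all: it is imported verbatim as Corollary A.4 of Costabel--Dauge--Duduchava \cite{costabel2003}, and the remark immediately following it warns that the result is ``far from trivial'' and that, unlike the two-dimensional arc case, it cannot be obtained by perturbing a flat model problem --- ``a careful analysis of the BIO symbols is needed.'' Your proposal is essentially an attempt to carry out exactly that perturbation strategy, and the gap sits where the paper says it sits. Asserting that the pulled-back operators coincide near the edge with the model Wiener--Hopf/Mellin symbols ``up to analytic perturbations, so the leading edge behaviors are $d^{-1/2}$ for $\lambda\circ\vr$ and $d^{1/2}$ for $\nu\circ\vr$'' only addresses the leading term. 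The theorem claims that $(\lambda\circ\vr)\,w_\mathbb{D}$ and $(\nu\circ\vr)\,w_\mathbb{D}^{-1}$ are \emph{smooth}, which requires a complete edge asymptotic expansion in which every term is an integer power of the edge distance times the single factor $d^{\mp 1/2}$, with smooth coefficients along the edge and, critically, \emph{no logarithmic terms}. Generic elliptic edge asymptotics do produce logarithms; proving their absence for these particular operators is the entire content of \cite{costabel2003} (hence its title), and it is not delivered by the symbol-matching step you sketch. In effect your argument invokes the cited corollary at precisely the point where the work has to be done, so the ``reduction'' either presupposes the full strength of the corollary on general screens (in which case nothing needed reducing) or leaves the essential transfer unproved.

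Two of the auxiliary reduction steps also need repair. The difference kernel $G_k-G_0=(e^{ikr}-1)/(4\pi r)$ with $r=\|\vx-\vx'\|$ is \emph{not} analytic across the diagonal: its real part expands in the odd powers $r, r^3,\dots$, which are only finitely differentiable in $(\vx,\vx')$. Hence $V_\Gamma[k]-V_\Gamma[0]$ gains two orders but is not smoothing, and the one-shot recast $V_\Gamma[0]\lambda=g_d-(V_\Gamma[k]-V_\Gamma[0])\lambda$ must be replaced by a bootstrap --- one that has to run in the weighted edge-asymptotic classes rather than ordinary Sobolev spaces, which again presupposes mapping properties established only in the reference. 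The transversality and Jacobian-comparability issues you flag at the end are real but secondary; the essential missing ingredient is the log-free full expansion.
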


\begin{remark}
The last theorem is far from trivial. In the two-dimensional case --open arcs-- solutions are also singular at the arc endpoints, which is proven straightforwardly for any $k$ and arc as a perturbation of the simpler case $k= 0$, $\Gamma= (-1,1)\times \{0\}$. On the other hand, for screens in three-dimensional space a careful analysis of the BIO symbols is needed (\emph{cf.}~\cite{costabel2003} and references therein). 
\end{remark} 

\section{Spectral Discretizations}
From the definition of auxiliary spaces Lemma \ref{lemma:densities}, it is natural to consider a collection of finite-dimensional spaces spanned by elements $q_l^e$ (resp.~$p_l^o$) for the discretization of the Dirichlet (resp.~Neumann) BIEs.  

For $N \in \IN$, we set the following spaces defined over the disk:
\begin{align*}
\mathbb{Q}^N_e(\mathbb{D}) &:= \spn\{q^e_l\}_{l=0}^{I(N)} = \left\lbrace 
u  : \ u = 
\sum_{l=0}^N \sum_{\substack{m=-l\\ m+l \text{ even}}}^l u^l_m q^l_m, \ u^l_m \in \mathbb{C}
\right\rbrace  \subset \widetilde{H}^{-\half}(\mathbb{D}), \\
\mathbb{P}^N_o(\mathbb{D}) &:= \spn\{p^o_l\}_{l=0}^{I(N)} = \left\lbrace 
u : \ u = 
\sum_{l=0}^{N+
1} \sum_{\substack{m=-l\\ m+l \text{ odd}}}^l u^l_m p^l_m, \ u^l_m \in \mathbb{C}
\right\rbrace  \subset \widetilde{H}^{\half}(\mathbb{D}), 
\end{align*}
where again $I(N) = \frac{N(N+1)}{2}+N$. For an arbitrary smooth screen, we define corresponding spaces through pullbacks as follows
\begin{align*}
\mathbb{Q}^N_e(\Gamma) &:= \left\lbrace u : \  = \frac{(v \circ \vr^{-1}) \| \vr^{-1}\|}{J_\vr \circ \vr^{-1}}, \quad v \in \mathbb{Q}^N_e(\mathbb{D}) \right\rbrace \subset \widetilde{H}^{-\half}(\Gamma), \\
\mathbb{P}^N_o(\Gamma) &:= \left\lbrace u : \  = v \circ \vr^{-1}, \quad v \in \mathbb{P}^N_o(\mathbb{D}) \right\rbrace \subset \widetilde{H}^{\half}(\Gamma).
\end{align*}
where the inclusions can be easily shown. These spaces are spanned by the basis functions:
\begin{align*}
q^{e,r}_n &:= \frac{(q^e_n \circ \vr^{-1}) \| \vr^{-1}\|}{J_\vr \circ \vr^{-1}}, \quad
p^{o,r}_n := p^o_n \circ \vr^{-1}, 
\end{align*}
respectively, and where $n = 0,\ldots,I(N)$.

\subsection{Discrete Problem}
\label{sec:DiscreteProblems}
Before we introduce discrete versions of Problem \ref{prob:BIEs}, we set forth the following notation. Given $N \in \mathbb{N}$, vectors on  $\mathbb{C}^{I(N)+1}$ are written in bold symbols and superindex $N$. Associated to every vector, there are two functions which are denoted with the same symbol (not in bold) and superindex $N$, and an extra the sub-index $e$ if the function is to be understood in $\mathbb{Q}^N_e(\Gamma)$, and $o$ if in $\mathbb{P}_o^N(\Gamma)$. For example, given $\vlambda^N \in \mathbb{C}^{I(N)+1}$, one can write
\begin{align*}
\lambda^N_e = \sum_{m=0}^{I(N)} \lambda^N_m q_m^{e,r},\quad 
\lambda^N_o = \sum_{m=0}^{I(N)} \lambda^N_m p_m^{o,r}.
\end{align*}
Given $N \in \IN$, the Galerkin discretization of the BIE formulation \eqref{eq:bies} reads as 
\begin{problem}
\label{prob:discreteBIEs}
Seek  $\mathbf{\vlambda}^N \in \mathbb{C}^{I(N)+1}$ (resp.~$\vnu^N \in \mathbb{C}^{I(N)+1}$) such that 
\begin{align*}
V^N_\Gamma[k] \vlambda^N = \vg_d^N, \quad \text{(discrete Dirichlet BIE)}.\\
W^N_\Gamma[k] \vnu^N = \vg_n^N,\quad \text{(discrete Neumann BIE)},
\end{align*}
where the respective discretization matrices elements are defined as 
\begin{align*}
(V^N_\Gamma[k])_{l,m} = \langle V_\Gamma[k] q^{e,r}_m, q_l^{e,r}\rangle_\Gamma, \quad 
(W^N_\Gamma[k])_{l,n} = \langle W_\Gamma[k] p^{o,r}_m, p_l^{o,r}\rangle_\Gamma,
\end{align*}
and the corresponding discrete right-hand sides are, for $l,m =0,\hdots,I(N)$,
\begin{align*}
(g_d^N)_l = \langle g_d, q^{e,r}_l \rangle_\Gamma, \quad 
(g_n^N)_l=\langle g_n, p^{o,r}_l \rangle_\Gamma.
\end{align*}
\end{problem}
The discrete approximations for $\lambda, \nu$ (solutions of Problem \ref{prob:BIEs}) are $\lambda^N_e, \nu^N_o$ respectively. 
From standard Galerkin properties we have the following result:
\begin{lemma}[Theorem 4.29 in \cite{Sauter:2011}]
\label{lemma:qopti}
There exists $N_0 \in \IN_0$ --potentially different for the weakly- and hyper-singular BIEs-- such that, for any $N \in \IN$ with $N> N_0$, the solutions $\vlambda^N$, and $\vnu^N$ of Problem \ref{prob:discreteBIEs} exist, are unique and also we obtain quasi-optimality: 
\begin{align*}
\| \lambda - \lambda^N_e \|_{\widetilde{H}^{-\half}(\Gamma)} &\lesssim \inf_{v \in \mathbb{Q}^e_N(\Gamma)}
\| \lambda - v \|_{\widetilde{H}^{-\half}(\Gamma)}, \quad
\| \nu - \nu^N_o \|_{\widetilde{H}^{\half}(\Gamma)} \lesssim \inf_{v \in \mathbb{P}^o_N(\Gamma)}
\| \nu - v \|_{\widetilde{H}^{\half}(\Gamma)}.
\end{align*}
\end{lemma}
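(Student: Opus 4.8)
The plan is to recognize both discrete equations as Galerkin discretizations of continuous variational problems governed by bounded sesquilinear forms that satisfy a G\r{a}rding inequality, and then to invoke the abstract convergence result cited in the statement. To this end I introduce the forms $a_V(\lambda,\mu):=\langle V_\Gamma[k]\lambda,\mu\rangle_\Gamma$ on $\widetilde{H}^{-\half}(\Gamma)\times\widetilde{H}^{-\half}(\Gamma)$ and $a_W(\nu,\phi):=\langle W_\Gamma[k]\nu,\phi\rangle_\Gamma$ on $\widetilde{H}^{\half}(\Gamma)\times\widetilde{H}^{\half}(\Gamma)$, whose boundedness is immediate from the mapping properties of the weakly- and hyper-singular operators recalled in Section \ref{sec:BIOS}. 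The discrete problems in Problem \ref{prob:discreteBIEs} are then precisely the Galerkin restrictions of $a_V$ to $\mathbb{Q}^N_e(\Gamma)$ and of $a_W$ to $\mathbb{P}^N_o(\Gamma)$.

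I would verify three hypotheses. \emph{(i) G\r{a}rding inequality.} For $k=0$ the operators $V_\Gamma[0]$ and $W_\Gamma[0]$ are elliptic (coercive) on the respective trace spaces; for $k>0$ the differences $V_\Gamma[k]-V_\Gamma[0]$ and $W_\Gamma[k]-W_\Gamma[0]$ have smooth kernels and hence induce compact operators into the dual spaces, so there exist $\alpha,\beta>0$ and compact operators $T_V,T_W$ with
\[
\operatorname{Re} a_V(\mu,\mu)\ge \alpha\|\mu\|^2_{\widetilde{H}^{-\half}(\Gamma)}-\operatorname{Re}\langle T_V\mu,\mu\rangle_\Gamma,
\qquad
\operatorname{Re} a_W(\phi,\phi)\ge \beta\|\phi\|^2_{\widetilde{H}^{\half}(\Gamma)}-\operatorname{Re}\langle T_W\phi,\phi\rangle_\Gamma.
\]
\emph{(ii) Injectivity.} The well-posedness theorem stated above guarantees unique continuous solutions, hence $V_\Gamma[k]$ and $W_\Gamma[k]$ are injective; together with (i) and the Fredholm alternative this yields bijectivity. \emph{(iii) Density.} Lemma \ref{lemma:densities} gives density of $\spn\{q^e_l\}$ in $\widetilde{H}^{-\half}(\mathbb{D})$ and of $\spn\{p^o_l\}$ in $\widetilde{H}^{\half}(\mathbb{D})$; pulling these back through $\vr$ and using the norm equivalence of Lemma \ref{lemma:SobolevLocalization}, the nested families $\mathbb{Q}^N_e(\Gamma)$ and $\mathbb{P}^N_o(\Gamma)$ have dense union in $\widetilde{H}^{-\half}(\Gamma)$ and $\widetilde{H}^{\half}(\Gamma)$, respectively.

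With these hypotheses in hand, the conclusion is exactly the content of the quoted theorem, whose argument I would reproduce as follows. The main obstacle --- and the crux of the whole statement --- is promoting the G\r{a}rding inequality to a \emph{uniform discrete} inf-sup (stability) bound valid for all $N>N_0$. This is the standard duality (Schatz) argument: since $T_V$ and $T_W$ are compact and the subspaces are asymptotically dense, the adjoint approximation error tends to zero, which forces the discrete forms to satisfy a discrete inf-sup condition with constant bounded away from zero once $N$ exceeds some threshold $N_0$. Uniform discrete stability then gives existence and uniqueness of $\vlambda^N,\vnu^N$ for $N>N_0$, and the stated quasi-optimality follows from the usual C\'ea-type estimate. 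I expect only the compactness claim in (i) for $k>0$ on the open screen and the transport of density through $\vr$ in (iii) to require genuine care; the remainder is the direct machinery of the cited theorem.
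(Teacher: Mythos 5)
Your proposal is correct and follows exactly the route the paper takes: the paper offers no proof of its own but simply invokes the abstract Galerkin quasi-optimality theorem of Sauter--Schwab for injective operators satisfying a G\r{a}rding inequality, and your three hypotheses (boundedness plus G\r{a}rding via compact perturbation of the coercive $k=0$ operators, injectivity from continuous well-posedness, and asymptotic density of the trial spaces transported from Lemma \ref{lemma:densities} via Lemma \ref{lemma:SobolevLocalization}) are precisely what is needed to apply it, with the Schatz duality argument supplying the uniform discrete inf-sup constant for $N>N_0$. The only point deserving an explicit word is that density of $\mathbb{Q}^N_e(\Gamma)$ also uses that the smooth, nowhere-vanishing factor $\|\vx\|/J_{\vr}(\vx)$ in its definition induces an isomorphism of $\widetilde{H}^{-\half}(\mathbb{D})$, which you implicitly acknowledge as the step requiring care.
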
 
From these last estimates we obtain the rate of error convergence. 

\begin{theorem}
\label{thrm:rateConvergence}
Given $g_d, g_n \in \cD(\Gamma)$, then for any regularity indices $s_d > -\frac{1}{4}$, $s_n > \frac{1}{2}$, it holds that
\begin{align}
\label{eq:rateConvergence}
 \| \lambda - \lambda^N_e \| _{\widetilde{H}^{-\half}(\Gamma)} &\lesssim N^{-\fourth-s_d} \| \lambda \circ \vr \|_{Q^{s_d}_e(\mathbb{D})}, \\ 
   \| \nu - \nu^N_o \| _{\widetilde{H}^{\half}(\Gamma)} &\lesssim N^{\half-s_n} \| \nu \circ \vr \|_{P^{s_n}_o(\mathbb{D})}.
\end{align} 
\end{theorem}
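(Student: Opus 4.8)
The plan is to establish both estimates through one common scheme, and to dispatch the Neumann bound first because it is free of the weighting that complicates the Dirichlet case. In each case the chain of inequalities is: quasi-optimality (Lemma~\ref{lemma:qopti}) to replace the Galerkin error by a best-approximation error over the trial space; a choice of competitor built from the truncation projectors $\Pi^{o,p}_N$, $\Pi^{e,q}_N$; transfer of the $\widetilde{H}^{\pm\half}(\Gamma)$-norm onto $\mathbb{D}$ via Lemma~\ref{lemma:SobolevLocalization}; comparison of the resulting $\widetilde{H}^{\pm\half}(\mathbb{D})$-norm with an auxiliary $P$/$Q$-norm through the continuous inclusions of Lemma~\ref{lemma:sobRev}; and the projection estimate~\eqref{eq:ProjEstimations}, whose admissibility requirement $s<r$ is exactly what forces the hypotheses $s_d>-\fourth$ and $s_n>\half$. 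Concretely, for the Neumann equation Lemma~\ref{lemma:qopti} gives $\|\nu-\nu^N_o\|_{\widetilde{H}^{\half}(\Gamma)}\lesssim\|\nu-v^\ast\|_{\widetilde{H}^{\half}(\Gamma)}$ for any $v^\ast\in\mathbb{P}^N_o(\Gamma)$; since that space pulls back verbatim to $\mathbb{P}^N_o(\mathbb{D})$, I take $v^\ast=(\Pi^{o,p}_N(\nu\circ\vr))\circ\vr^{-1}$, so that $(\nu-v^\ast)\circ\vr=\nu\circ\vr-\Pi^{o,p}_N(\nu\circ\vr)$; Lemma~\ref{lemma:SobolevLocalization}, the inclusion $P^{\half}_o(\mathbb{D})\subset\widetilde{H}^{\half}(\mathbb{D})$ from~\eqref{lemsobRev3}, and~\eqref{eq:ProjEstimations} with $s=\half<r=s_n$ then yield $(N+1)^{\half-s_n}\|\nu\circ\vr\|_{P^{s_n}_o(\mathbb{D})}$, which is the asserted bound.

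The Dirichlet estimate follows the same skeleton, now using the inclusion $Q^{-\fourth}_e(\mathbb{D})\subset\widetilde{H}^{-\half}(\mathbb{D})$ from~\eqref{lemsobRev2} and the projector $\Pi^{e,q}_N$, with one essential difference: the trial space $\mathbb{Q}^N_e(\Gamma)$ does not pull back verbatim, but with the weight $w:=\|\vx\|/J_\vr$, so every $u\in\mathbb{Q}^N_e(\Gamma)$ satisfies $u\circ\vr=w\,\tilde v$ with $\tilde v\in\mathbb{Q}^N_e(\mathbb{D})$. Writing $\hat\lambda:=(\lambda\circ\vr)/w$ and choosing the competitor attached to $\tilde v=\Pi^{e,q}_N\hat\lambda$, the pulled-back error factorises cleanly as $(\lambda-v^\ast)\circ\vr=w\,(\hat\lambda-\Pi^{e,q}_N\hat\lambda)$, which is what lets the scheme proceed.

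The main obstacle is thus to show that the weight $w$ is harmless, namely that multiplication by $w$ is bounded on $\widetilde{H}^{-\half}(\mathbb{D})$ and multiplication by $1/w$ is bounded on $Q^{s_d}_e(\mathbb{D})$, so that $\|w(\hat\lambda-\Pi^{e,q}_N\hat\lambda)\|_{\widetilde{H}^{-\half}(\mathbb{D})}\lesssim\|\hat\lambda-\Pi^{e,q}_N\hat\lambda\|_{Q^{-\fourth}_e(\mathbb{D})}$ and $\|\hat\lambda\|_{Q^{s_d}_e(\mathbb{D})}\lesssim\|\lambda\circ\vr\|_{Q^{s_d}_e(\mathbb{D})}$; once these two multiplier bounds are in hand, \eqref{eq:ProjEstimations} with $s=-\fourth<r=s_d$ closes the argument. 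The geometric hypotheses guarantee that $w$ and $1/w$ are bounded and nowhere null, but the delicate point is their behaviour as multipliers on the non-standard $P$/$Q$ scales, especially at the centre of $\mathbb{D}$ where $\|\vx\|$ is only Lipschitz while $J_\vr$ degenerates to the same order; one must first verify that $J_\vr/\|\vx\|$ extends to a genuinely smooth, nonvanishing factor before a multiplier theorem can be invoked on these scales. Finally, the smoothness of $g_d,g_n$ combined with Theorem~\ref{thrm:solreg} makes $w_\mathbb{D}(\lambda\circ\vr)$ and $\nu\circ\vr$ smooth, so that $\|\lambda\circ\vr\|_{Q^{s_d}_e(\mathbb{D})}$ and $\|\nu\circ\vr\|_{P^{s_n}_o(\mathbb{D})}$ are finite for every admissible $s_d,s_n$, which is precisely what upgrades the algebraic rates to super-algebraic convergence.
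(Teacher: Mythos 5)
Your proposal follows essentially the same route as the paper's proof: quasi-optimality (Lemma~\ref{lemma:qopti}), pullback to the disk (Lemma~\ref{lemma:SobolevLocalization}), the inclusions of Lemma~\ref{lemma:sobRev}, and the projection estimate~\eqref{eq:ProjEstimations} applied to the competitor built from $\Pi^{e,q}_N(\lambda\circ\vr/R)$ with $R=\|\vx\|/J_\vr$ (your $w$ is the paper's $R$), with the Neumann case handled by the same scheme. The multiplier-boundedness of $R$ and $R^{-1}$ on the $Q$-scale that you single out as the main obstacle is precisely the step the paper dispatches by remarking that $R$ is smooth and nowhere null (a consequence of the standing assumption that $J_\vr(\vx)/\|\vx\|$ is bounded and nowhere null), so your account is, if anything, more explicit on that point.
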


\begin{proof} 
We prove only the Dirichlet case as the Neumann follows similar arguments. 
Denote by $R$ a smooth non-zero function $R(\vx) := \dfrac{\| \vx\|}{J_\vr(\vx)}$. By using Lemmas \ref{lemma:SobolevLocalization} and \ref{lemma:qopti} one finds that
\begin{align*}
 \| \lambda - \lambda^N_e \|_{\widetilde{H}^{-\half}(\Gamma)} &\lesssim 
  \inf_{v \in \mathbb{Q}^e_N(\Gamma)} \| \lambda - v \|_{\widetilde{H}^{-\half}(\Gamma)}
  =
  \inf_{v \in \mathbb{Q}^e_N(\mathbb{D})} \| \lambda \circ \vr - Rv \|_{\widetilde{H}^{-\half}(\mathbb{D})}.
\end{align*}  
We can use the relation between Sobolev and auxiliary spaces (Lemma \ref{lemma:sobRev}) to obtain 
\begin{align*}
 \| \lambda - \lambda^N_e \|_{\widetilde{H}^{-\half}(\Gamma)} &\lesssim \inf_{v \in \mathbb{Q}^e_N(\mathbb{D})} \left\lVert \lambda \circ \vr - Rv \right\rVert_{Q^{-\fourth}_e(\mathbb{D})}  \lesssim 
 \inf_{v \in \mathbb{Q}^e_N(\mathbb{D})} \left\lVert \frac{\lambda }{R} - v \right\rVert_{Q^{-\fourth}_e(\mathbb{D})}.
\end{align*}
Since $R^{-1}$ is smooth, the results follow by selecting $v = \Pi^{e,q}_N \frac{\lambda}{R}$ and estimate \eqref{eq:ProjEstimations} with $s = -1/4, r = s_d$.
\end{proof} 

\begin{remark}
Notice that the right-hand side in \eqref{eq:rateConvergence} is indeed finite for any $s_d>-\fourth$, $s_n>\half$. This follows from the norm definitions as
$$\| \lambda \circ \vr \|_{Q^{s_d}_e(\mathbb{D})} = 
\| L_e(w_\mathbb{D} \lambda \circ \vr) \|_{H^{s_d}(\mathbb{S})},$$
and by Theorem \ref{thrm:solreg}, which ensures that $w_\mathbb{D} \lambda \circ \vr$ is smooth. Thus, implying a smooth lifting to the sphere. Using definitions of the Sobolev spaces \cite[Chapter 2]{mclean2000strongly}, any Sobolev norm is finite. The same is true for the Neumann case. 
\end{remark}

One concludes from Theorem \ref{thrm:rateConvergence} that the spectral method converges \emph{super algebraically}, i.e.~faster than any fixed negative power of $N$. 
\subsection{Matrix Computation}
\label{sec:MatrixComputations}
We now describe how matrix entries are computed. We start by detailing the approximation of weakly-singular integrals that appear in the corresponding matrix, namely
\begin{align*}
V_{l,m}[k] = \langle V_\Gamma[k] q^{e,r}_m, q_l^{e,r}\rangle_{\Gamma}.
\end{align*} 
Then, we briefly discuss how integrals for the hyper-singular BIO are obtained using the same techniques, and also how regular entries are computed. Before we proceed, we recall some notions of numerical quadrature and convergence.
\subsubsection{Numerical Quadrature}

Let $a<b$ two real numbers, and $f: (a,b) \rightarrow \mathbb{C}$. The Gauss-Legendre quadrature rule approximate the integral of $f$ as a weighted sum of point evaluations of $f$, the approximation is constructed as
\begin{align*} 
\int_{a}^b f(x) dx \approx \sum_{i=1}^{N_q} w^L_i f(x^L_i),
\end{align*} 
where $N_q$ is the order of the quadrature, and $(\{w_i^L\}_{i=1}^{N_q}$ ,$\{x_i^L\}_{i=1}^{N_q})$ are the weights and points \footnote{Notice that the points and weights depend on $a,b$, but once given for a fixed interval they can be translated using a linear change of variable. Consequently, we omit this dependence in notation. } of the quadrature respectively. 
When $f$ is smooth, the quadrature converges with a rate bounded as a function of $N_q$. In particular, by using the fact that the quadrature rule is constructed to be exact for every polynomial up to some degree and classical bounds for polynomial interpolation \cite[Chap.~7 and 8]{trefethen2013approximation}, one can establish that for $f$ with $(m+1)$ continuous derivatives\footnote{Better convergence rates can be achieved for Gaussian quadrature rules \cite[Theorem 3.6.24]{stoer1980introduction}.}:
\begin{align*}
\left| \int_{a}^b f(x) dx -\sum_{i=1}^{N_q} w^L_i f(x^L_i) \right| \lesssim N_q^{-m}.
\end{align*}
Moreover, if $f$ admits and analytic extension to a Bernstein ellipse of parameter $\rho$ in the complex plane, one can show that
\begin{align*}
\left| \int_{a}^b f(x) dx -\sum_{i=1}^{N_q} w^L_i f(x^L_i) \right| \lesssim \rho^{-N_q}.
\end{align*}
Whenever an integral of a function can be approximated with the same rates as the last two, we say that the approximation is optimal. In particular, Gauss-Legendre quadrature rules are
optimal for any smooth function integral. Jacobi quadrature rules are built as an approximation of the following family of integrals: 
\begin{align*}
\int_{a}^b f(x) (x-a)^\alpha (b-x)^\beta dx \approx \sum_{i=1}^{N_q} w^{\alpha,\beta}_i f(x^{\alpha,\beta}_i),
\end{align*} 
where, again, $N_q$ is the quadrature order,  $(\{w_i^{\alpha, \beta}\}_{i=1}^{N_q}$ ,$\{x_i^{\alpha, \beta}\}_{i=1}^{N_q})$ are the pair of weights and points, and $\alpha ,\beta > -1$.  This rule is also optimal, i.e.~that the rate of convergence is again  $N_q^{-m}$ when $f$ has $(m+1)$ continuous derivative and $\rho^{-N_q}$ when $f$ is $\rho-$analytic.
\subsubsection{Approximation of weakly-singular integrals}
\label{sec:AproxSingInt}
Given a screen $\Gamma$ para-metrized by a function 
$ \vr $, we consider the computation of integrals of the form:
$$ 
I_{m,l}[k] = \int_\Gamma \int_\Gamma \frac{\cos(k \| \vx- \vx'\|)}{4 \pi \| \vx - \vx' \|} 
q_m^{e,r}(\vx') \overline{q_l^{e,r}(\vx)} d\vx d\vx'.
$$
These integrals are associated with the real part of the weakly-singular BIO. Its imaginary part is regular since the function $\sin(k \| \vx-\vx'\|)/\| \vx -\vx' \|$ is smooth. Moreover, the cosine factor is smooth so from here onwards, we assume that $k=0$ and denote $I_{m,l}[0]$, as $I_{m,l}$. By performing a change of variable, this integral becomes
$$ 
I_{m,l} = \int_\mathbb{D} \int_\mathbb{D} \frac{1}{4 \pi \| \vr(\vx) - \vr(\vx') \|} 
q^e_m(\vx') \overline{q^e_l(\vx)} d\vx d\vx'.
$$
The first step is to take care of the kernel singularity, i.e.~when $\vx = \vx'$. To this end, we make the following parametrization: 
\begin{align}
\label{eq:polars}
\vx = r\ve_\theta, \quad 
\vx' = \vx + \lambda A(r,\beta) \ve_{\theta+\beta}, 
\end{align} 
where
 \begin{align}
\label{eq:Afun}
A(r,\beta) := \sqrt{1-r^2\sin^2\beta}- r\cos\beta,
\end{align}
represents the length of the segment whose direction is $\theta +\beta$, and goes from point $\vx$ to the boundary of the disk (see Figure \ref{fig:polar}).
\begin{figure}
\centering
\includegraphics[scale=0.1]{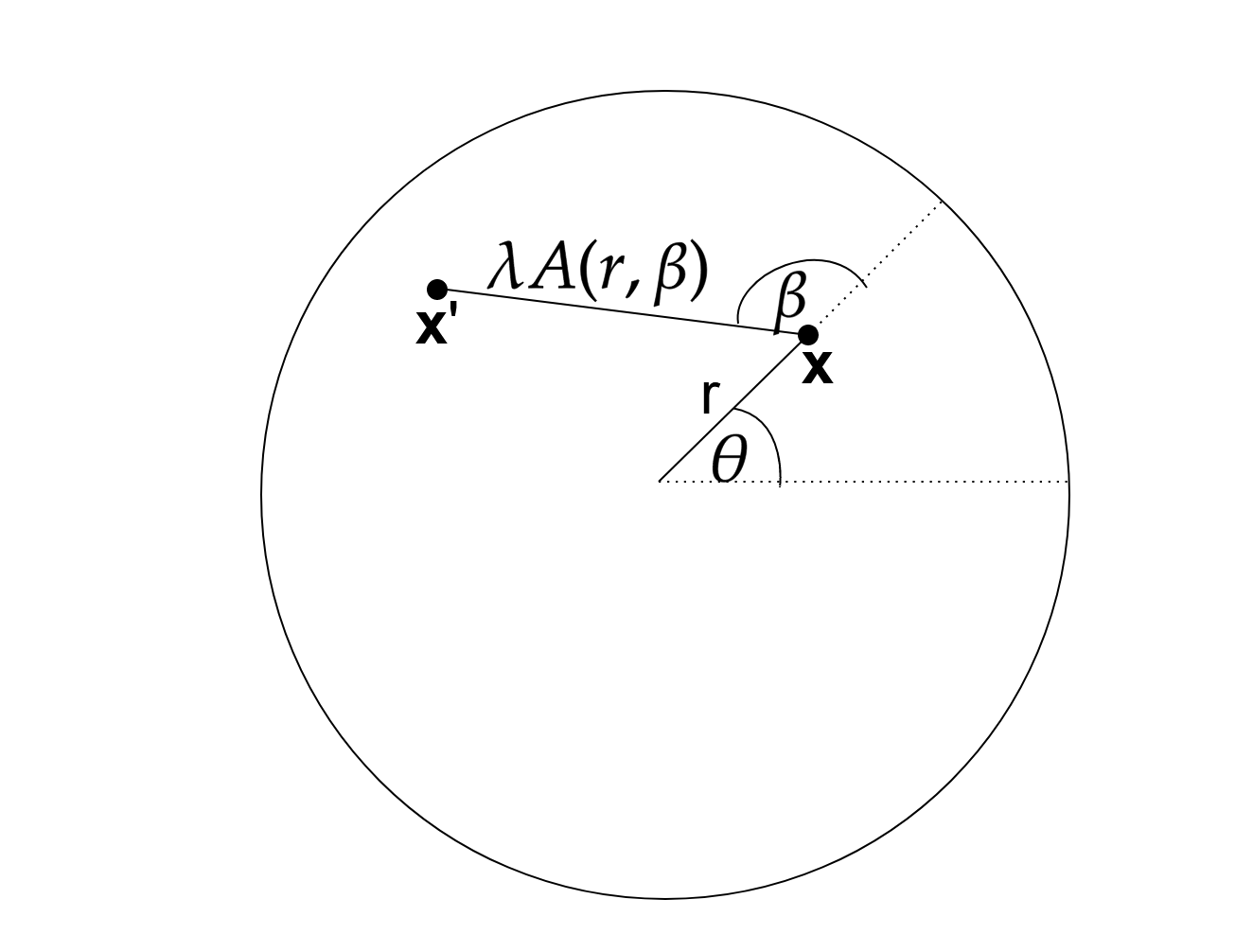}
\label{fig:polar}
\caption{Polar change of variables performed in \eqref{eq:polars}.} 
\end{figure}
The integral can be expressed as 
\begin{align*}
I_{m,l} &= \int_{-\frac{\pi}{2}}^{3\frac{\pi}{2}} \int_0^1 \int_{-\frac{\pi}{2}}^{3\frac{\pi}{2}} \int_{0}^{1} 
G_{m,l}(\theta,r,\lambda,\beta)d\lambda d\beta dr d\theta ,\\
G_{m,l}(\theta,r,\lambda,\beta)&:= 
\frac{ \lambda A^2(r,\beta) }
{ 4 \pi \|\vr(\vx) - \vr(\vx + \lambda A(r,\beta) \ve_{\theta+\beta}) \|}
\frac{r P^e_m(\vx) \overline{P^e_l(\vx')}}{\sqrt{1-r^2}\sqrt{1- \| \vx +\lambda A(r,\beta)\ve_{\theta+\beta} \|^2}
}
.
\end{align*}
Since $\vr$ is smooth and its gradient has full rank, the factor
$$
\frac{ \lambda A^2(r,\beta) }
{ 4 \pi \|\vr(\vx) - \vr(\vx + \lambda A(r,\beta) \ve_{\theta+\beta}) \|},
$$
is at least bounded. The term $(1-r^2)^{-\half}$ while singular can be tackled with the Jacobi rule. However, this is not enough since $A(r,\beta)$ and$\sqrt{1- \| \vx +\lambda A(r,\beta)\ve_{\theta+\beta} \|^2}$ also have singularities that prevent an optimal rate of convergence.  The following results characterize the behavior of these functions. 
\begin{lemma}
\label{lemma:Asings}
The function $A : [0,1] \times  [\frac{-\pi}{2},\frac{3\pi}{2}] \rightarrow [0,2]	$ has the following properties: 
\begin{enumerate}
\item[(i)]  $A(r,\beta) \geq 0$.
\item[(ii)] Partial derivatives discontinuities of $A(r,\beta)$ are located at $\{r = 1 , \beta = \frac{-\pi}{2}, \frac{\pi}{2}, \frac{3\pi}{2} \}$.
\item[(iii)] For $|\beta| \leq \frac{\pi}{2}$, $A(r,\beta) \cos \beta \leq 1-r$.  
\end{enumerate}
\end{lemma}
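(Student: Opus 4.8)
The plan is to prove the three items separately, each by direct manipulation of the closed form $A(r,\beta) = \sqrt{1-r^2\sin^2\beta} - r\cos\beta$; the geometric reading of $A$ as the length of the chord from $\vx = r\ve_\theta$ to $\partial\mathbb{D}$ along $\ve_{\theta+\beta}$ is a useful sanity check, but the arguments are purely algebraic. A preliminary observation used throughout is that on $[0,1]\times[-\pi/2,3\pi/2]$ the radicand obeys $1-r^2\sin^2\beta \geq 1-r^2 \geq 0$, so $A$ is real-valued and the square root is well defined.

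For (i) I would split on the sign of $\cos\beta$. When $\cos\beta \leq 0$ we have $r\cos\beta \leq 0 \leq \sqrt{1-r^2\sin^2\beta}$, so $A \geq 0$ at once. When $\cos\beta > 0$ both quantities compared are nonnegative, and $A \geq 0$ is equivalent, after squaring, to $1-r^2\sin^2\beta \geq r^2\cos^2\beta$, that is to $r^2 \leq 1$, which holds. For (iii), where $|\beta| \leq \pi/2$ guarantees $\cos\beta \geq 0$, I would isolate the radical: moving the $-r\cos^2\beta$ term to the right and using $\cos^2\beta = 1-\sin^2\beta$, the claim $A\cos\beta \leq 1-r$ becomes $\cos\beta\sqrt{1-r^2\sin^2\beta} \leq 1-r\sin^2\beta$. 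Both sides are nonnegative (the right-hand side because $r\sin^2\beta \leq 1$), so I may square, and the difference of the two squares simplifies --- again via $\cos^2\beta = 1-\sin^2\beta$ --- to the manifestly nonnegative quantity $\sin^2\beta(1-r)^2$. This proves (iii).

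The substance of the lemma is (ii). Since $-r\cos\beta$ is analytic on $\IR^2$, every failure of smoothness comes from $\sqrt{1-r^2\sin^2\beta}$, and as $t\mapsto\sqrt{t}$ is $C^\infty$ on $(0,\infty)$ but singular at $t=0$, the partial derivatives of $A$ are continuous exactly where the radicand is strictly positive. Solving $1-r^2\sin^2\beta=0$ with $r\leq1$ and $|\sin\beta|\leq1$ forces $r=1$ and $|\sin\beta|=1$, i.e. the three points $\beta \in \{-\pi/2,\pi/2,3\pi/2\}$ at $r=1$, exactly as claimed. To certify that these are genuine discontinuities I would compute $\partial_r A = -\cos\beta - r\sin^2\beta/\sqrt{1-r^2\sin^2\beta}$ and $\partial_\beta A = r\sin\beta - r^2\sin\beta\cos\beta/\sqrt{1-r^2\sin^2\beta}$, and note that along any approach to one of these points the numerator $r\sin^2\beta \to 1$ stays bounded away from zero while the denominator tends to $0^+$, so $\partial_r A$ is unbounded there.

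The main obstacle is precisely this last point: making ``discontinuity of the partial derivatives'' rigorous and ruling out a hidden cancellation of the blow-up. Here no cancellation occurs for $\partial_r A$, because its singular numerator tends to $1$ rather than to $0$, which closes the argument; the remaining items (i) and (iii) are routine sign analyses and a single squaring step.
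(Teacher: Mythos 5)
Your proof is correct, and for items (i) and (ii) it follows the same route as the paper: the paper also locates the derivative discontinuities at the zeros of the radicand, merely writing $1-r^2\sin^2\beta=(1-r^2)+r^2\cos^2\beta$ as a sum of two nonnegative terms so that both must vanish, which is the same conclusion you reach by solving $r=1$, $|\sin\beta|=1$ directly. You go one step further than the paper by exhibiting $\partial_r A=-\cos\beta-r\sin^2\beta/\sqrt{1-r^2\sin^2\beta}$ and checking that the singular numerator tends to $1$ rather than cancelling, which the paper leaves implicit; that is a worthwhile addition. For item (iii) your argument is genuinely different: the paper proposes either evaluating $\beta\mapsto A(r,\beta)\cos\beta$ at the endpoints $\beta=\pm\frac{\pi}{2}$ and at its interior critical points, or invoking the elementary geometric fact that $r+A(r,\beta)\cos\beta$ is the $\ve_\theta$-component of the point $\vx+A\ve_{\theta+\beta}$ lying on the unit circle, hence at most $1$. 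You instead rewrite the claim as $\cos\beta\sqrt{1-r^2\sin^2\beta}\le 1-r\sin^2\beta$, check both sides are nonnegative, and square to obtain the identity $(1-r\sin^2\beta)^2-\cos^2\beta(1-r^2\sin^2\beta)=\sin^2\beta(1-r)^2\ge 0$, which I have verified. Your algebraic route is self-contained and avoids both the calculus of critical points and the geometric appeal, at the cost of obscuring the geometric meaning; either is acceptable, and no gap remains.
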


\begin{proof}
The first item is immediate by definition. For the second one, notice that the discontinuities occur when the square-root parts vanish, i.e.
$$ (1-r^2) +r^2\cos^2\beta = 0.$$
As a sum of two non negative terms, the singularities occurs when both terms vanish, thus leading to our result directly. 
The last item is obtained by direct evaluation at extreme points $\beta = \pm \frac{\pi}{2}$ and critical values of $\beta$ such that $\dfrac{\partial A(r,\beta)\cos\beta}{\partial \beta} =0$, or by directly using elementary geometrical proprieties. 
\end{proof} 

\begin{lemma}
\label{lemma:pesosing}
Discontinuities of any partial derivative of the term
$$
\sqrt{1- \| \vx +\lambda A(r,\beta)\ve_{\theta+\beta} \|^2},
$$
occur at $\{\lambda =1\} \cup \{ r= 1, \lambda =0\} \cup 
\{ r=1, \beta = \pm \frac{\pi}{2},\frac{3\pi}{2} \}$.
\end{lemma}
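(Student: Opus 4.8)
The plan is to make the radicand completely explicit, factor it using the quadratic equation that defines $A$, and then read the non-smooth locus directly off the factorization, invoking Lemma \ref{lemma:Asings} to control $A$ itself. First I would expand the radicand: since $\vx = r\ve_\theta$ and $\ve_\theta\cdot\ve_{\theta+\beta}=\cos\beta$, one gets
$$\|\vx+\lambda A\ve_{\theta+\beta}\|^2 = r^2 + 2\lambda A r\cos\beta + \lambda^2 A^2,$$
so the term depends only on $(r,\beta,\lambda)$ and not on $\theta$. Now $A=A(r,\beta)$ is by construction the nonnegative root of $t^2+2r\cos\beta\,t-(1-r^2)=0$, whence $A^2+2r\cos\beta\,A=1-r^2$. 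Substituting this identity gives the clean factorization
$$1-\|\vx+\lambda A\ve_{\theta+\beta}\|^2 = A(1-\lambda)\big[A(1+\lambda)+2r\cos\beta\big].$$
Writing $C:=A(1+\lambda)+2r\cos\beta$, a short computation shows $C=\sqrt{1-r^2\sin^2\beta}+r\cos\beta+\lambda A\ge 0$, so all three factors are nonnegative on the parameter domain (consistent with $\vx'\in\mathbb{D}$), and therefore the term equals $\sqrt{A}\,\sqrt{1-\lambda}\,\sqrt{C}$.

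With this product form the analysis becomes local. Away from the zeros of $A$, $1-\lambda$, $C$ and away from the non-smooth set of $A$, each factor is smooth and strictly positive, so the product is smooth; it then remains to examine each degenerate locus. The factor $\sqrt{1-\lambda}$ is non-smooth precisely on $\{\lambda=1\}$. By Lemma \ref{lemma:Asings}, item (ii), $A$ is non-smooth only on $\{r=1,\beta=\pm\tfrac{\pi}{2},\tfrac{3\pi}{2}\}$, which already lies in the claimed set. For the zeros I would solve $A=0$ and $C=0$ separately: $A=0$ forces $r=1$ with $\cos\beta\ge 0$, while $C=0$ forces $r=1$ together with either $\lambda=0$ (when $\cos\beta<0$) or the corner angles (when $\cos\beta=0$). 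Taking the union and keeping only the loci where the square root genuinely destroys smoothness yields exactly $\{\lambda=1\}\cup\{r=1,\lambda=0\}\cup\{r=1,\beta=\pm\tfrac{\pi}{2},\tfrac{3\pi}{2}\}$.

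The delicate point, and the step I expect to be the main obstacle, is the behavior along the edge $r=1$. There $A$ vanishes, and I must pin down its order of vanishing — to first order in $1-r$ away from the corners — so that $\sqrt{A}$ reproduces exactly the $\sqrt{1-r}$ endpoint behavior already carried by the explicit weight $\sqrt{1-r^2}$ and handled by the Jacobi rule in the $r$ variable. This is the reason the generic edge is not recorded as a new singular location: the derivatives transverse to that weight (in $\lambda$ and $\beta$) stay bounded because the vanishing $\sqrt{A}$ sits in the numerator of $\partial_\lambda$ and $\partial_\beta$ of the term, and only the collision locus $\lambda=0$, where $C$ additionally degenerates, and the angular corners survive. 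The corners $\{r=1,\beta=\pm\tfrac{\pi}{2},\tfrac{3\pi}{2}\}$ are the hardest, since there $A$ is simultaneously non-smooth and vanishing together with $C$; here Lemma \ref{lemma:Asings} must be used in full, and the square-root singularity must be carefully disentangled from the inherited non-smoothness of $A$ to confirm non-smoothness persists.
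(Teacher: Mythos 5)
Your argument is correct and its algebra is consistent with the paper's: expanding the paper's expression \eqref{eq:pesoexp} with the identity $A^2+2r\cos\beta\,A=1-r^2$ (which encodes that $\vx+A\ve_{\theta+\beta}$ lies on the unit circle) gives exactly your three-factor form $A(1-\lambda)\bigl[A(1+\lambda)+2r\cos\beta\bigr]$. The difference is where each proof stops factoring. The paper keeps the two-factor form $(1-\lambda)\bigl[(1+\lambda)(1-r^2)-2\lambda r\cos\beta A\bigr]$ and locates the zeros of the bracket by a case split on the sign of $\cos\beta$: for $\frac{\pi}{2}\le\beta\le\frac{3\pi}{2}$ the bracket is a sum of two nonnegative terms, while for $|\beta|\le\frac{\pi}{2}$ it is bounded below via Lemma \ref{lemma:Asings}(iii) by $(1+\lambda)(1-r^2)-2\lambda r(1-r)=(1-r)\bigl(1+r+\lambda(1-r)\bigr)$. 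Your complete factorization $\sqrt{A}\,\sqrt{1-\lambda}\,\sqrt{C}$ with $C=\sqrt{1-r^2\sin^2\beta}+r\cos\beta+\lambda A\ge 0$ removes the case split and does not need item (iii) at all; the zero sets of $A$ and $C$ are read off directly, which is a genuine simplification. Both proofs share the same residual imprecision, which you flag explicitly and the paper does not: since $A$ vanishes (to first order in $1-r$) on the whole face $\{r=1,\ |\beta|<\frac{\pi}{2}\}$, the term has an unbounded $r$-derivative there for every $\lambda$ --- indeed the paper's own second case concludes only that singularities occur at ``$\lambda=1$ or $r=1$''. The stated set must therefore be read as the locus of singularities not already absorbed by the $\sqrt{1-r}$-type weight that the Jacobi rule in $r$ handles in $I^b$ and $I^e$; your closing discussion of the order of vanishing of $A$ along $r=1$ makes this precise and supplies the step the paper leaves implicit.
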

\begin{proof}
Again, critical points occur only when the term inside the square-root vanishes. This term can be expressed as 
\begin{align}
\label{eq:pesoexp}
1-\| \vx +\lambda A(r,\beta)\ve_{\theta+\beta} \|^2 = 
(1-\lambda)\left[ (1+\lambda)(1-r^2) -2\lambda r \cos(\beta) A(r,\beta) \right],
\end{align} 
for which all previously listed points are critical. Hence, we are left to check that no other critical points exist. 

First, let us consider the case $ \frac{\pi}{2} \leq \beta \leq \frac{3 \pi}{2}$ so that  $-2\lambda r \cos\beta A(r,\beta) \geq 0$. Thus, the singularities are characterized by $\lambda =1$ --because of the first factor--, and also the points where 
\begin{align*}
(1+\lambda)(1-r^2) = 0, \\
2\lambda r \cos\beta A(r,\beta) =0.
\end{align*}
From this condition and Lemma \ref{lemma:Asings} it is easy to see that no further critical points take place. Now, if $|\beta| \leq \frac{\pi}{2}$, by the third item of Lemma \ref{lemma:Asings} we have that 
\begin{align*}
( (1+\lambda)(1-r^2) -2\lambda r \cos\beta A(r,\beta) ) \geq 
(1+\lambda) (1-r^2) -2\lambda r (1-r)), 
\end{align*} 
which implies that the singularities can only occur if $\lambda = 1$ --because of the first term--, or $r=1$, and the result follows. 
\end{proof}

Based on the above considerations, we split the integrals $I_{m,l}$ according to the singularities and critical points as follows. The detailed analysis for the last three types is given in the Appendix \ref{Appendix:quads}.
\begin{itemize}
\item[(a)] The first type is
\begin{align*}
I^{a}_{m,l} := \int_{-\frac{\pi}{2}}^{\frac{3\pi}{2}} \int_{0}^{\frac{\sqrt{3}}{2}} \int_{-\frac{\pi}{2}}^{\frac{3\pi}{2}} \int_0^1  G_{m,l}(\theta,r,\lambda,\beta) d\lambda d\beta dr d \theta,
\end{align*}
for which only a singularity of the form $(1-\lambda)^{-\half}$ occurs. This can numerically be treated using the corresponding Jacobi rule when integrating in $\lambda$ and Gauss-Legendre for the integrals in $\theta,r,\beta$, resulting in an optimal approximation.
\item[(b)] The second type is
\begin{align*}
I^{b}_{m,l} := \int_{-\frac{\pi}{2}}^{\frac{3\pi}{2}} \int_{\frac{\sqrt{3}}{2}}^1 \int_{(\frac{-\pi}{3},\frac{\pi}{3}) \cup 
(\frac{2\pi}{3},\frac{4\pi}{3}) } \int_{\half}^1  G_{m,l}(\theta,r,\lambda,\beta) d\lambda d\beta dr d \theta,
\end{align*}
which has singularities of the form $(1-r)^{-\half}$, and $(1-\lambda)^{-\half}$, so we use a Jacobi rule in these two variables and Gauss-Legendre for $\theta$ and $\beta$. 
\item[(c)] The third one has critical points occurring as a combination of variables:
\begin{align}
\label{eq:I1integral}
I^{c}_{m,l}:= \int_{-\frac{\pi}{2}}^{\frac{3 \pi}{2}} \int_{\frac{\sqrt{3}}{2}}^1 \int_{(\frac{-\pi}{2},\frac{-\pi}{3}) \cup 
(\frac{\pi}{3},\frac{\pi}{2})\cup (\frac{\pi}{2},\frac{2\pi}{3})
\cup (\frac{4\pi}{3},\frac{3\pi}{2})   } \int_{0}^{\half}  G_{m,l}(\theta,r,\lambda,\beta) d\lambda d\beta dr d \theta,
\end{align}
Here, critical points lie in $(\lambda,r,\beta) = (0,1,\beta)$, and $(\lambda,r,\beta) = (\lambda,1,\beta_0)$, $\beta_0 \in \{\pm \frac{\pi}{2}, \frac{3\pi}{2} \}$. To tackle this, we will use two polar change of variables (see \ref{sec:Ic}).

\item[(d)] The fourth one is of the form 
\begin{align}
\label{eq:I2integral}
I^d_{m,l} := \int_{-\frac{\pi}{2}}^{\frac{3\pi}{2}} \int_{\frac{\sqrt{3}}{2}}^1 \int_{(\frac{-\pi}{3},\frac{\pi}{3}) \cup 
(\frac{2\pi}{3},\frac{4\pi}{3})} \int_{0}^{\half}  G_{m,l}(\theta,r,\lambda,\beta) d\lambda d\beta dr d \theta,
\end{align} 
being sightly simpler that the previous case and only requiring one polar change of variable in $\lambda$ and $r$ variables. 
\item[(e)] Finally, the last integral type is given by
\begin{align}
\label{eq:I3integral}
I^e_{m,l} := \int_{-\frac{\pi}{2}}^{\frac{3\pi}{2}} \int_{\frac{\sqrt{3}}{2}}^1 \int_{(\frac{-\pi}{2},\frac{-\pi}{3}) \cup 
(\frac{\pi}{3},\frac{\pi}{2})\cup (\frac{\pi}{2},\frac{2\pi}{3})
\cup (\frac{4\pi}{3},\frac{3\pi}{2})   } \int_{\half}^1  G_{m,l}(\theta,r,\lambda,\beta) d\lambda d\beta dr d \theta,
\end{align}
which needs one polar change of variable in $(r,\cos\beta)$ and application of a Jacobi rule for the integral in the $\lambda$ variable.
\end{itemize}

\subsubsection{Approximation of smooth integrals}
Smooth integrals can be of two forms: 
\begin{align*}
I^f_{m,l}&:= \int_\Gamma \int_\Gamma {G_{reg}}(\vx,\vx') q^{e,r}_m(\vx') \overline{q_l^{e,r}(\vx)} d\vx d\vx',\quad
I^g_{m,l}:= \int_\Gamma g(\vx) \overline{q^{e,r}_l(\vx)} d\vx,
\end{align*}
where ${G_{reg}}$, and $g$ are smooth functions. The former comes from the imaginary part of the fundamental solution, whereas the latter from testing the right-hand side. We focus only in the second case, as the first one is just a tensorisation. 
By using the screen parametrization, we obtain
\begin{align*}
I^g_{m,l} =  \int_{-\frac{\pi}{2}}^{\frac{3\pi}{2}}\int_0^1 g(\vr(\vx))  \frac{\overline{p^e_l(\vx)}}{\sqrt{1-r^2}} r dr d\theta,
\end{align*}
where $\vx = r\ve_\theta$. Using the Gauss-Legendre rule when integrating in $\theta$ and Jacobi for $r$ we obtain optimal rates of convergence. 

\subsubsection{Approximation of hyper-singular Integrals}
\label{sec:hypmods}
As it was pointed out, the discretization basis for the hyper-singular operator BIO ($p^{o,r}_m$) vanishes on the boundary $\partial \Gamma$, and consequently, the entries of the corresponding matrix can be computed using the integration-by-parts formula \cite[Corollary 3.3.24]{Sauter:2011}:
\begin{align}
\label{eq:Wsplit}
 \begin{split}
\langle W_\Gamma[k] p^{o,r}_m,p^{o,r}_l \rangle  = \int_\Gamma \int_\Gamma \frac{e^{ik \|\vx -\vx' \|}}{4 \pi \| \vx -\vx'\|}curl_\Gamma p^{o,r}_m (\vx') \cdot \overline{curl_\Gamma p^{o,r}_l (\vx)} d\vx' d\vx \\-k^2
\int_\Gamma \int_\Gamma \frac{e^{ik \|\vx -\vx' \|}}{4 \pi \| \vx -\vx'\|} \widehat{\vn}(\vx) \cdot \widehat{\vn}(\vx') p^{o,r}_m (\vx')\overline{p^{o,r}_l (\vx)} d\vx' d\vx,
\end{split}
\end{align} 
where $\widehat{\vn}$ denote the unitary normal vector to $\Gamma$ (with a fixed orientation) and $curl_\Gamma f  = \widehat{\vn} \times \nabla f$, whenever $f$ can be extended to a neighborhood of $\Gamma$.
We start by considering the second integral on the right-hand side, we reduce the computations to $ \mathbb{D}$ and obtain 
\begin{align*}
k^2 
\int_\mathbb{D} \int_\mathbb{D} \frac{e^{ik \|\vr(\vx) -\vr(\vx') \|}}{4 \pi \| \vr(\vx) -\vr(\vx')\|} \widehat{\vn}(\vr(\vx)) \cdot \widehat{\vn}(\vr(\vx')) p^{o}_m (\vx')\overline{p^{o}_l (\vx)} \frac{J_\vr(\vx)}{\|\vx\|}\frac{J_\vr(\vx')}{\|\vx'\|}d\vx' d\vx,
\end{align*}
Since functions $p^{o}_l$ can be characterized as the product between a smooth function and the weight function $w_\mathbb{D}$ the same change of variables used for the weakly-singular case works for this integral\footnote{It is necessary to change the parameters of the Jacobi quadrature rule, as now the singularity is of the form $\sqrt{1-x^2}$, instead of $1/\sqrt{1-x^2}$.}. 
For the first integral in the right-hand side in \eqref{eq:Wsplit}, we compute the surface $curl$ operators. Using the parametrization of $\Gamma$, the explicit expression
$$ (curl_\Gamma f) (\vr) = \frac{1}{J_\vr }\left( \partial_{x_2} \vr \partial_{x_1} (f \circ r) - \partial_{x_1} \vr \partial_{x_2} (f \circ \vr)\right) $$
arises, where $\partial_{x_1}, \partial_{x_2}$ denote the partial derivatives with respect to the arguments of the parametrization $\vr$. In our implementation, $\vr$ is given in polar coordinates and so $x_1$ and $x_2$ are the radial and angular variables, respectively. The function $f\circ \vr $ corresponds to $p^o_m$ for some $m$,and thus, we require their partial derivatives. Moreover, using the two-indices representation we can write $p^o_m = p^\cl_\cm$, for a pair of integers $\cl,\cm$ such that $\cm+\cl$ is odd. The angular derivative is given by 
$$\partial_\theta p^{\cl}_\cm = \partial_\theta \left( C^\cl_\cm \mathrm{P}^\cl_{|\cm|}(\sqrt{1-r^2})e^{icm\theta}\right) = i\cm C^\cl_\cm  \mathrm{P}^\cl_{|\cm|} (\sqrt{1-r^2})e^{i \cm\theta} = 
i \cm p^\cl_\cm,$$
where the first equality is the explicit definition of the projected basis \eqref{eq:projectedexplicit}. We can express this in terms of the basis used for the discretization of the weakly-singular BIO as we have the following recursive relation (see \cite[15.87]{ARFKEN2013715}):
\begin{align*}
\mathrm{P}^\cl_{\cm}(x) = \frac{-\sqrt{1-x^2}}{2\cm x }\left(\mathrm{P}^\cl_{\cm+1}(x) +(\cl+\cm)(\cl-\cm+1)\mathrm{P}^\cl_{\cm-1}(x) \right),  \quad \cm \neq 0
\end{align*}
so we conclude that 
\begin{align}
\label{eq:derivativeTheta}
\partial_\theta p^\cl_\cm =  \frac{-ir}{2}\left(a^\cl_\cm q^\cl_{\cm+1}+b^\cl_\cm q^\cl_{\cm-1} \right),\end{align}
where 
\begin{align}
\label{eq:devconstants}
a^\cl_\cm := e^{-i\theta}\begin{cases}
\sqrt{(\cl-|\cm|)(\cl+|\cm|+1)}, \quad \cm \geq 0\\
-\sqrt{(\cl+|\cm|)(\cl-|\cm|+1)}, \quad   \cm < 0 \end{cases} \\
b^\cl_\cm :=  e^{i \theta}\begin{cases}
\sqrt{(\cl+|\cm|)(\cl-|\cm|+1)}, \quad  \cm \geq 0 \\
-\sqrt{(\cl-|\cm|)(\cl+|\cm|+1)}, \quad  \cm < 0 \end{cases}
\end{align}
Notice that,  for $\cm=0$, the derivative is zero and this is also true for expression \eqref{eq:derivativeTheta}. Since $\cm+\cl$ is odd, we have that $q^\cl_{\cm+1}$ and $q^\cl_{\cm-1}$ are even functions. For the derivative with respect to $r$, we need the derivative of the associated Legendre functions, given by the following recursion (see \cite[15.91]{ARFKEN2013715}):
\begin{align*}
\frac{d \mathrm{P}^\cl_{\cm}(x)}{dx} = \frac{1}{2 \sqrt{1-x^2}}\left(-\mathrm{P}^\cl_{\cm+1}(x)+
(\cl+\cm)(\cl-\cm+1)\mathrm{P}^\cl_{\cm-1}(x) \right) ,
\end{align*}
Hence, we obtain 
$$\partial_r p^\cl_\cm = \frac{1}{2}
\left(a^\cl_\cm q^\cl_{\cm+1}-b^\cl_\cm q^\cl_{\cm-1}  \right),$$
where $a^\cl_\cm, b^\cl_\cm$ are defined as in \eqref{eq:devconstants}.  Again, we have expressed the derivative in terms of the basis of the weakly-singular case. We conclude that for the computation of the hyper-singular BIO only minor modifications respect to the weakly-singular one are needed. These modifications are the change of the parameter of the respective Jacobi rule, the inclusion of the product of the normal vectors, and  the extra smooth factors $e^{\pm{\imath} \theta}, \partial_{x_j} \vr \cdot \partial_{x_j'}\vr$ ($j,j'= 1,2$) that have to be included in the kernel function. 
We have omitted the details of smooth integrals associated with the hyper-singular BIO as they do no present any extra challenge.
\subsection{Numerical Implementation}
\label{sec:NumImplementaiton}
Throughout this section we denote by $\widehat{N} = I(N)+1 = N+ \frac{N(N+1)}{2}+1$, the number of degrees of freedom when using the spaces $\mathbb{Q}^N_e(\Gamma)$, $\mathbb{P}^N_o(\Gamma)$ for the discretization of the underlying integral equations. 
Every integral needed to assembly the matrix discretization of the weakly-singular BIO ($I^a$, $I^b$, $I^c$, $I^d$, $I^e$, $I^f$ ) is a four-dimensional integral. The total number of these integral computations can be reduced using the matrix symmetries\footnote{The matrix associated with the real part of the fundamental solution ($I^\alpha, \alpha \in \{a,b,c,d,e\}$) is symmetric, while the imaginary one ($I^f$) is anti-symmetric.}. Consequently, only $\frac{\widehat{N}(\widehat{N}+1)}{2}$ combinations are needed instead of $\widehat{N}^2$.
Furthermore, since $p_{-m}^l = (-1)^m \overline{p_m^l}$ the actual number of interactions needed to compute the weakly-singular BIO is $\frac{1}{4} \left(\widehat{N} +\frac{N+2}{2} \right) \left(\frac{3}{2}\widehat{N} -\frac{N-2}{4} \right)$, assuming $N$ even\footnote{If $N$ is odd, the computational cost is $ \frac{1}{4} \left(\widehat{N} +\frac{N+1}{2} \right) \left(\frac{3}{2}\widehat{N} -\frac{N-3}{4} \right)$.}.

A four-dimensional integral computed by tensorized 1D-quadrature rules, with parameters
 $N_q^1,N_q^2$, $N_q^3,N_q^4$, has a computational cost of $\mathcal{O}(\Pi_{j=1}^4
N_q^j)$ operations and evaluations. To compute integrals arising from the weakly singular BIO $I^\alpha \in \{ b,c,d,e\}$, we denote by $N_q^\theta$ the number of points for the $\theta$ variable, and  $N_q^\alpha$ the number of points for the other three variables depending on $\alpha$. For  $\alpha=a$, we use $N_q^\theta$ for variables $\theta$ and $\beta$, and $N^a_q$ for the rest. For the hyper-singular case, same rules apply.
  
For the smooth integrals $\alpha \in \{ f,g\}$ we could use $N_q^\theta$ for the $\theta$ and $\theta'$ variables, and $N^\alpha_q, \alpha \in \{f,g\}$ for $\vr$, and $\vr'$. However, in practice it is better to reformulate the integrals onto the sphere, where the basis correspond to spherical harmonics, and approximate the integrals using the spherical harmonics transforms. In particular we use the implementation detailed in \cite{sthns}.
\section{Full Discretizaton Error Analysis}
The rate of convergence of the spectral Galerkin discretization method was already established in Theorem \ref{thrm:rateConvergence}. Yet, this does not illustrate the performance of the fully discrete method as extra error terms appears as consequence of the quadrature approximation of integral terms. Thus, we first measure the perturbation in error convergence rates due to quadrature error. 

\subsection{Quadrature Error}
\label{sec:quaderror}
For the sake of brevity, we denote $Q^\alpha$ the quadrature approximation of $I^\alpha$, $\alpha \in \{a,b,c,d,e,f,g\}$, defined in Section \ref{sec:AproxSingInt}. Since we assume that the screen is parametrized by a $\rho-$analytic function, and the approximation is optimal, we have that 
\begin{align}
\label{eq:quaderror0} 
| I^\alpha - Q^\alpha | \lesssim  \rho^{-N_q^\theta}+ \rho^{-N_q^\alpha}. 
\end{align}
While this bound is precise in terms of how the quadrature error decreases with increasing number of quadrature points, the unspecified constant depends on the trial and test basis indices. Hence, since the rate of convergence depends on the number of trial functions, we need a more detailed quadrature error analysis considering the exact index of the trial and test basis. For this, let us consider the canonical integral 
\begin{align*}
I^l_m = \int_0^1 \int_{-\frac{\pi}{2}}^{\frac{3\pi}{2}} g(\vx) p^l_m(\vx) r d\theta dr,
\end{align*}
where $\vx = r \mathbf{e}_\theta$,  $g$ is $\rho-$analytic in $(r,\theta)$ and $l+m$ is even. It is enough to consider this case, as all integrals discussed in Section \ref{sec:AproxSingInt} can be reduced to this form by means of analytic change of variables to tensorization of integral. Denote by $Q^l_m$ the quadrature approximation of $I^l_m$ obtained by a Gauss-Legendre rule in both variables, with $N^\theta_q$ points in the $\theta$ variable, and $N_q$ points in the $r$ variable. 

We denote by $\mathcal{E}_\rho[a,b]$ the region enclosed by the Bernstein ellipse in the complex plane with foci in $a,b$ and parameter $\rho$. Now, we recall the classical error bound for analytic integrands. 

\begin{theorem}[Theorem 5.3.13 in \cite{Sauter:2011}]
\label{thrm:quadbound}
If $m+l$ is even, for $g$ $\rho$-analytic in $[0,1]$ in the radial variable and $\rho$-analytic in $[-\frac{\pi}{2},\frac{3\pi}{2}]$ for the angular one, it holds that
\begin{align*}
\begin{split}
|I^l_m - Q^l_m | \lesssim (2\rho)^{-2N_q^\theta} \max_{r \in [0,1]} \max_{z \in \partial \mathcal{E}_\rho[\frac{-\pi}{2},\frac{3\pi}{2}]} | g(r,z) p^l_m(r,z)|   \\+
(2\rho)^{-2N_q} \max_{\theta \in [\frac{-\pi}{2},\frac{3\pi}{2}]} \max_{z \in \partial \mathcal{E}_\rho[0,1]}| g(z,\theta) p^l_m(z,\theta)| 
\end{split}
\end{align*}
where the unspecified constant does not depend on the integrand of $I^l_m$.
\end{theorem}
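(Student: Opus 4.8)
The plan is to reduce the two–dimensional quadrature error to two one–dimensional Gauss--Legendre errors via an iterated (telescoping) decomposition of the tensor-product rule, and then to invoke the classical contour-integral estimate for analytic integrands in each variable separately. Write $F(r,\theta):=g(r\ve_\theta)\,p^l_m(r\ve_\theta)$, let $\{(w^\theta_i,\theta_i)\}$ and $\{(w^r_j,r_j)\}$ be the Gauss--Legendre nodes and weights on $[-\frac{\pi}{2},\frac{3\pi}{2}]$ and $[0,1]$, and denote by $\mathcal{I}_\theta,\mathcal{I}_r$ the exact integrations and by $\mathcal{Q}_\theta,\mathcal{Q}_r$ the corresponding quadratures (the polar Jacobian $r$ being absorbed into the radial integrand). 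Adding and subtracting $\sum_j w^r_j r_j\,\mathcal{I}_\theta[F(r_j,\cdot)]$ gives the starting identity
\begin{align*}
I^l_m - Q^l_m = \big(\mathcal{I}_r-\mathcal{Q}_r\big)\!\left[\,r\,\mathcal{I}_\theta[F]\,\right] + \sum_j w^r_j\, r_j\,\big(\mathcal{I}_\theta-\mathcal{Q}_\theta\big)\!\left[F(r_j,\cdot)\right].
\end{align*}

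First I would verify that, for $m+l$ even, $F$ is $\rho$-analytic separately in $r\in[0,1]$ and in $\theta\in[-\frac{\pi}{2},\frac{3\pi}{2}]$. Analyticity in $\theta$ is immediate from the explicit formula $p^l_m(r,\theta)=C^l_m\,\mathrm{P}^l_{|m|}(\sqrt{1-r^2})\,e^{im\theta}$ in \eqref{eq:projectedexplicit} together with the hypothesis on $g$. The radial variable is the delicate one: the factors $(1-x^2)^{|m|/2}$ and the argument $\sqrt{1-r^2}$ inside $\mathrm{P}^l_{|m|}$ look non-analytic, but on $[0,1]$ one has $(1-x^2)^{|m|/2}\big|_{x=\sqrt{1-r^2}}=r^{|m|}$, and, since $m+l$ is even, the $|m|$-th derivative of the degree-$l$ Legendre polynomial $\mathrm{P}_l$ is an even polynomial, hence a polynomial in $r^2$. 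Thus $p^l_m(\cdot,\theta)$ coincides on $[0,1]$ with an entire function of $r$, and $F$ inherits the $\rho$-analyticity of $g$ in each variable.

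With analyticity established, each term is a genuine one–dimensional Gauss--Legendre error. For the second term I would fix the real node $r_j$ and apply the classical estimate in $\theta$, obtaining a bound of order $(2\rho)^{-2N^\theta_q}\max_{z\in\partial\mathcal{E}_\rho[-\pi/2,3\pi/2]}|F(r_j,z)|$; summing against the \emph{positive} weights and using $\sum_j w^r_j r_j=\mathcal{I}_r[r]=\tfrac12$ (exact for $N_q\ge 1$) together with $r_j\in[0,1]$ yields the first term on the right-hand side of the claim. For the first term, the inner integral $r\,\mathcal{I}_\theta[F](r)$ is analytic in $r$, so the one–dimensional estimate in $r$ produces a bound of order $(2\rho)^{-2N_q}\max_{z\in\partial\mathcal{E}_\rho[0,1]}|z\,\mathcal{I}_\theta[F](z)|$; bounding $|z|$ by a $\rho$-dependent constant on the ellipse and the $\theta$-integral by $2\pi\max_\theta|F(z,\theta)|$ gives the second term of the claim. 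Because the constant in the one–dimensional analytic quadrature bound depends only on $\rho$ and on the interval, the resulting constant is independent of $g$ and of the indices $l,m$, as asserted.

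The hard part will not be the individual one–dimensional bounds but the \emph{stability} of the partially discretized operator: one must control the intermediate quantities at \emph{complex} arguments — the analytic continuation $z\mapsto\mathcal{I}_\theta[F](z)$ on $\partial\mathcal{E}_\rho[0,1]$ and the weighted node-sum $\sum_j w^r_j r_j\,|(\mathcal{I}_\theta-\mathcal{Q}_\theta)[F(r_j,\cdot)]|$ — uniformly in terms of the ellipse-maximum of $F$. This is precisely where positivity of the Gauss weights and exact integration of low-degree polynomials enter, and it is the step ensuring the constant does not deteriorate with $N_q,N^\theta_q$ or with the basis indices. The remaining one–dimensional core is the classical analytic quadrature estimate, which is the content of the cited \cite[Theorem 5.3.13]{Sauter:2011}.
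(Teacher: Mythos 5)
Your argument is correct, and it is essentially the standard proof behind the cited result: the paper itself offers no proof of Theorem~\ref{thrm:quadbound}, quoting it directly from \cite[Theorem 5.3.13]{Sauter:2011}, so your telescoping of the tensor-product rule into two one-dimensional Gauss--Legendre errors plus the classical Bernstein-ellipse estimate is exactly the reconstruction one would expect. The one ingredient genuinely specific to this paper --- that for $m+l$ even the factor $\mathrm{P}^l_{|m|}(\sqrt{1-r^2})$ combines with $(1-x^2)^{|m|/2}\big|_{x=\sqrt{1-r^2}}=r^{|m|}$ and the parity of $\tfrac{d^{|m|}}{dx^{|m|}}\mathrm{P}_l$ to make $p^l_m$ a polynomial in $r$, hence entire --- you verify correctly; this is precisely why the hypothesis ``$m+l$ even'' appears in the statement. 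Your only overstatement is calling the stability of the half-discretized term the ``hard part'': positivity of the Gauss weights and $\sum_j w^r_j r_j\le 1$ make that step elementary, and the constant's independence of $l,m,g$ follows immediately since the one-dimensional contour estimate depends only on $\rho$ and the interval.
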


Since $g$ is assumed to be known, we can further simplify the error bound as 
\begin{align*}
\begin{split}
|I^l_m - Q^l_m | \lesssim (2\rho)^{-2N_q^\theta} \max_{r \in [0,1]} \max_{z \in \partial \mathcal{E}_\rho[\frac{-\pi}{2},\frac{3\pi}{2}]} | p^l_m(r,z)|   \\+
(2\rho)^{-2N_q} \max_{\theta \in [\frac{-\pi}{2},\frac{3\pi}{2}]} \max_{z \in \partial \mathcal{E}_\rho[0,1]}|  p^l_m(z,\theta)| ,
\end{split}
\end{align*}
with a constant that now depends on $g$.

\begin{corollary}
Under hypothesis of Theorem \ref{thrm:quadbound}, for the integral
\begin{align*}
\widetilde{I}^l_m :=  \int_0^1 \int_{-\frac{\pi}{2}}^{\frac{3\pi}{2}} g(\vx) q^l_m(\vx) r d\theta dr,
\end{align*}
it holds that 
\begin{align*}
\begin{split}
|\widetilde{I}^l_m - \widetilde{Q}^l_m | \lesssim (2\rho)^{-2N_q^\theta} \max_{r \in [0,1]} \max_{z \in \partial \mathcal{E}_\rho[\frac{-\pi}{2},\frac{3\pi}{2}]} | p^l_m(r,z)|   \\+
(2\rho)^{-2N_q} \max_{\theta \in [\frac{-\pi}{2},\frac{3\pi}{2}]} \max_{z \in \partial \mathcal{E}_\rho[0,1]}|  p^l_m(z,\theta)|,
\end{split}
\end{align*}
where $\widetilde{Q}^l_m$ denotes the quadrature approximation using a Gauss-Legendre rule with $N_q^\theta$ points in $\theta$, and a Jacobi rule with $N_q$ points in the $r$ variable. 
\end{corollary}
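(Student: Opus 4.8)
The plan is to deduce this bound directly from Theorem \ref{thrm:quadbound} by transferring the radial singularity of $q^l_m$ out of the integrand and into the Jacobi quadrature weights, so that the error analysis sees only an analytic integrand. Recalling that $q^l_m = p^l_m/\sqrt{1-r^2}$, I would first rewrite
\begin{align*}
\widetilde{I}^l_m = \int_0^1 \int_{-\frac{\pi}{2}}^{\frac{3\pi}{2}} \frac{g(\vx)\, p^l_m(\vx)}{\sqrt{1-r^2}}\, r \, d\theta\, dr,
\end{align*}
where $\vx = r\ve_\theta$. Since $m+l$ is even, the factor $g\, p^l_m\, r$ is $\rho$-analytic in both variables --- the analyticity of $p^l_m$ for $m+l$ even being the one already invoked in Theorem \ref{thrm:quadbound}, and that of $g$ assumed --- whereas the entire radial singularity is now carried by the explicit weight $(1-r^2)^{-\half}$.

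Next, I would treat the two variables separately, exactly as in the proof of Theorem \ref{thrm:quadbound}. The angular quadrature is again Gauss-Legendre and is applied to the same integrand, so its contribution to the error is bounded verbatim, producing the first term on the right-hand side. For the radial variable, $\widetilde{Q}^l_m$ is by construction the Gaussian rule associated with the weight $(1-r^2)^{-\half}$; applied to $\widetilde{I}^l_m$ it samples only the analytic factor $g\, p^l_m\, r$ at the Jacobi nodes. Being Gaussian, this rule shares with Gauss-Legendre the maximal degree of polynomial exactness $2N_q-1$, so the contour-integral estimate underlying Theorem \ref{thrm:quadbound} (Theorem 5.3.13 in \cite{Sauter:2011}) applies with $g\, p^l_m\, r$ in place of the Gauss-Legendre integrand. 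This yields a radial error term proportional to $(2\rho)^{-2N_q}\max_{\theta}\max_{z\in\partial\mathcal{E}_\rho[0,1]}|g(z,\theta)\, p^l_m(z,\theta)\, z|$; absorbing the bounded analytic factors $g$ and $z$ (the latter bounded on $\mathcal{E}_\rho[0,1]$) into the implied constant leaves exactly the stated $\max|p^l_m|$. Summing the angular and radial contributions gives the claim.

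The main obstacle will be to justify that replacing Gauss-Legendre by Gauss-Jacobi in the radial variable preserves both the geometric factor $(2\rho)^{-2N_q}$ and the same Bernstein ellipse $\mathcal{E}_\rho[0,1]$. I expect this to follow because Gauss-Jacobi quadrature, like Gauss-Legendre, integrates exactly all polynomials of degree up to $2N_q-1$, so its error admits a Cauchy-type contour representation over $\partial\mathcal{E}_\rho[0,1]$ of the same structure as the one used for Theorem \ref{thrm:quadbound}; the only change is that the singular weight $(1-r^2)^{-\half}$ is moved out of the integrand and into the positive, bounded quadrature weights. Consequently no additional factor depending on $l$ or $m$ is introduced, and the bound is naturally expressed through $p^l_m$ rather than $q^l_m$, precisely because the singular factor has been absorbed by the Jacobi rule.
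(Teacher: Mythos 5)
Your proposal is correct and follows exactly the reasoning the paper leaves implicit (the corollary is stated without proof there): write $q^l_m = p^l_m/\sqrt{1-r^2}$, move the radial singularity into the Jacobi weight, and reapply the contour-integral error bound of Theorem \ref{thrm:quadbound} to the remaining $\rho$-analytic integrand $g\,p^l_m\,r$, using that Gauss--Jacobi rules enjoy the same degree of exactness and hence the same $(2\rho)^{-2N_q}$ decay on the same Bernstein ellipse. Nothing further is needed.
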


We now proceed to estimate maxima of analytic extensions for the functions $p^l_m$. 
Remember that the explicit definition \eqref{eq:projectedexplicit} (see Section \ref{sec:SH}):
\begin{align*}
p^l_m(r,\theta) = C^l_{m} \mathrm{P}^l_{|m|}(\sqrt{1-r^2})e^{im\theta},
\end{align*} 
where $|C^l_{m}| = \sqrt{\frac{(2l+1)(l-|m|)!}{2\pi (l+|m|)!}}$, with $\mathrm{P}^l_{|m|}(\sqrt{1-r^2})$ smooth in the $r$ variable. By using \cite[Theorem 3]{LOHOFER1991226}, one deduces that
\begin{align*}
\max_{r \in [0,1]} \max_{z \in \partial \mathcal{E}_\rho[\frac{-\pi}{2},\frac{3\pi}{2}]} | p^l_m(r,z)| \lesssim \sqrt{2l+1} e^{|m|\frac{\pi}{2}(\rho-\rho^{-1})} \lesssim 
\sqrt{2l+1} e^{l\pi\rho} ,
\end{align*}
where the last inequality follows for $|m| \leq l$ and $\rho >1$. On the other hand, the second term can be bounded as 
\begin{align*}
\max_{\theta \in [\frac{-\pi}{2},\frac{3\pi}{2}]} \max_{z \in \partial \mathcal{E}_\rho[0,1]}|  p^l_m(z,\theta)| \leq |C^l_{m} | 
\max_{z \in \partial \mathcal{E}_\rho[0,1]}|  \mathrm{P}^l_{|m|}(\sqrt{1-z^2}) |.
\end{align*}
We can use the Rodr\'iguez formula to express the associated Legendre function in terms of Legendre polynomials:
\begin{align*}
|\mathrm{P}^l_{|m|}(\sqrt{1-z^2})| = |z^m|  \left\vert\left(\frac{d^m}{dx^m}\mathrm{P}^l(x)\right) |_{x = \sqrt{1-z^2}} \right\vert,
\end{align*}
where $\mathrm{P}^l$ denotes the $l$th Legendre polynomial. Obviously, $|z|  < \frac{\rho+\rho^{-1}}{4}$, for every $z \in \partial \mathcal{E}_\rho[0,1]$. Moreover,
\begin{align*}
\max_{z \in \partial \mathcal{E}_\rho[0,1]} \left\vert\left(\frac{d^m}{dx^m} \mathrm{P}^l(x)\right) \Big|_{x = \sqrt{1-z^2}} \right\vert = \max_{z \in A_\rho}  \left\vert\left(\frac{d^m}{dx^m} \mathrm{P}^l(x)\right) \Big|_{x = z}\right\vert,
\end{align*}
where $A_{\rho}$ is the image of $\partial \mathcal{E}_\rho[0,1]$ under the transformation $(1-x^2)^{\half}$, where the square root has to be understood as the pre-image of the square function. Since $\mathrm{P}^l$ are polynomials and using the maximum modulus principle, there exists $\widehat{\rho} >1$ such that 
\begin{align*}
\max_{z \in A_\rho}  \left\vert\left(\frac{d^m}{dx^m} \mathrm{P}^l(x)\right) \Big|_{x = z}\right\vert \leq \max_{z \in \partial \mathcal{E}_{\widehat{\rho}}[-1,1]}  \left\vert\left(\frac{d^m}{dx^m} \mathrm{P}^l(x)\right) \Big|_{x = z}\right\vert.
\end{align*}
Furthermore, using the Cauchy integral formula we have that
\begin{align*}
\frac{d^m}{dz^m} \mathrm{P}^l(z) = \frac{m!}{2 i\pi } \int_{\partial \mathcal{E}_{2\widehat{\rho}}[-1,1]} \frac{\mathrm{P}^l(x)}{(z-x)^{m+1}}dx, \quad \forall z \in \partial \mathcal{E}_{\widehat{\rho}}[-1,1].
\end{align*}
Hence, by using \cite{Wang2018}[Theorem 4.1] we have 
\begin{align*}
\max_{z \in \partial \mathcal{E}_{\widehat{\rho}}[-1,1]}  \left\vert\left(\frac{d^m}{dx^m} \mathrm{P}^l(x)\right) |_{x = z}\right\vert \leq \frac{m!}{2 \pi } L(\partial \mathcal{E}_{2\widehat{\rho}}[-1,1]) \mathrm{P}^l(\widehat{\rho} + (4 \widehat{\rho})^{-1})\max_{x \in \partial \mathcal{E}_{2\widehat{\rho}}[-1,1]} \frac{1}{|z-x|^{m+1}}, 
\end{align*}
 where $L(\partial \mathcal{E}_{2\widehat{\rho}}[-1,1])$ is the length of the corresponding ellipse, and as such, it can be estimated as $L(\partial \mathcal{E}_{2\widehat{\rho}}[-1,1]) \lesssim \widehat{\rho}$. Also, notice that the minimum distance between  $\partial \mathcal{E}_{2\widehat{\rho}}[-1,1]$ and $\partial \mathcal{E}_{\widehat{\rho}}[-1,1]$ is larger\footnote{This can be shown using elementary geometrical computations.} than $\widehat{\rho}$. Thus, one has
\begin{align*}
\max_{z \in \partial \mathcal{E}_{\widehat{\rho}}[-1,1]}  \left\vert\left(\frac{d^m}{dx^m} \mathrm{P}^l(x)\right) |_{x = z}\right\vert \lesssim 
\frac{m!}{\widehat{\rho}^{m}} \mathrm{P}^l(\widehat{\rho} + (4 \widehat{\rho})^{-1}) \lesssim \frac{m!}{2^l}\widehat{\rho}^{l-m},
\end{align*}
wherein the rightmost inequality follows for $l$ is large as the polynomial is dominated by the monomial of greatest degree. Finally, combining all the computations for $l$ large enough, we have that
\begin{align*}
\max_{\theta \in [\frac{-\pi}{2},\frac{3\pi}{2}]} \max_{z \in \partial \mathcal{E}_\rho[0,1]}|  p^l_m(z,\theta)| \lesssim |C^l_{|m|} | 
m! \widehat{\rho}^{l}  \lesssim \sqrt{\frac{(2l+1)(l!)^2}{(2l)!}} \widehat{\rho}^l  \lesssim \sqrt{2l+1} \widehat{\rho}^l 
\end{align*}
and the quadrature error is then bounded as 
\begin{align*}
|{I}^l_m - {Q}^l_m | \lesssim \sqrt{2l+1} \left[ (2\rho)^{-2N_q^\theta}  e^{\pi l \rho}   + (2\rho)^{-2N_q} \widehat{\rho}^l\right].
\end{align*}
This bound be further simplified to
\begin{align*}
|{I}^l_m - {Q}^l_m |  \lesssim \sqrt{2l+1} \left[ (2\rho)^{-2N_q^\theta}     + (2\rho)^{-2N_q} \right] \widetilde{\rho}^l 
\end{align*}
where $\widetilde{\rho}>1$.
We can summarize quadrature error bound in the following result: 
\begin{lemma}
\label{lemma:quaderror}
There is $N_0 \in \mathbb{N}$ such that given integers $m,l$, $|m| <l $ and $l>N_0$, and $g :[0,1] \times [\frac{-\pi}{2},\frac{3\pi}{2}] \rightarrow \mathbb{C}$ $\rho-$analytic in both variables. For the integral
$$I^l_m := \int_0^1\int_{\frac{-\pi}{2}}^{\frac{3\pi}{2}} g(\vx) p^l_m(\vx)r d\theta dr,$$
we have the error bound 
\begin{align*}
|{I}^l_m - {Q}^l_m |  \lesssim \sqrt{2l+1} \left( (2\rho)^{-2N_q^\theta}     + (2\rho)^{-2N_q} \right) \widetilde{\rho}^l 
\end{align*}
where $Q^l_m$ denotes the approximation by Gauss-Legendre of order $N_q,N_q^\theta$, for both variables accordingly, $\widetilde{\rho}>1$, and the unspecified constant does not depend on $l,m$.
\end{lemma}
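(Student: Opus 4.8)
The plan is to feed the pointwise bounds on the analytic continuation of $p^l_m$ into the abstract quadrature estimate of Theorem~\ref{thrm:quadbound} and then track the dependence on $l$ and $m$. First I would apply Theorem~\ref{thrm:quadbound} directly to $I^l_m$, which bounds $|I^l_m - Q^l_m|$ by the sum of $(2\rho)^{-2N_q^\theta}$ and $(2\rho)^{-2N_q}$, each weighted by the maximum modulus of $g\,p^l_m$ over the Bernstein ellipse in the angular, respectively radial, variable. Since $g$ is assumed $\rho$-analytic on all of $[0,1]\times[\frac{-\pi}{2},\frac{3\pi}{2}]$, its continuation is bounded on both ellipses by a constant depending only on $g$ and $\rho$, hence independent of $l$ and $m$; absorbing this factor into the implied constant reduces the problem to estimating $\max|p^l_m|$ over each ellipse.

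For the angular factor I would use the explicit formula \eqref{eq:projectedexplicit}, namely $p^l_m(r,\theta)=C^l_m\,\mathrm{P}^l_{|m|}(\sqrt{1-r^2})\,e^{im\theta}$. Continuing only in $\theta$, the modulus of $e^{imz}$ is controlled by $e^{|m|\,b_\rho}$, where $b_\rho=\frac{\pi}{2}(\rho-\rho^{-1})$ is the semi-minor axis of the ellipse with foci $-\frac{\pi}{2},\frac{3\pi}{2}$, while the associated-Legendre factor is bounded uniformly in $r$ by $\sqrt{2l+1}$ via \cite[Theorem~3]{LOHOFER1991226}. Using $|m|\le l$ and $\rho>1$ then gives the angular bound $\sqrt{2l+1}\,e^{l\pi\rho}$.

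The delicate term is the radial one, where the continuation enters through $\sqrt{1-z^2}$ inside the Legendre function. Here I would write $\mathrm{P}^l_{|m|}(\sqrt{1-z^2})=z^m\,(\tfrac{d^m}{dx^m}\mathrm{P}^l)(\sqrt{1-z^2})$ via the Rodrigues-type formula, transfer the maximum off the awkward image set $A_\rho$ onto a standard Bernstein ellipse $\partial\mathcal{E}_{\widehat\rho}[-1,1]$ by the maximum-modulus principle, and then estimate the $m$-th derivative by Cauchy's integral formula on a doubled ellipse, obtaining $\max|\tfrac{d^m}{dx^m}\mathrm{P}^l|\lesssim m!\,\widehat\rho^{-m}\,\mathrm{P}^l(\widehat\rho+(4\widehat\rho)^{-1})$ with the growth of $\mathrm{P}^l$ itself supplied by \cite[Theorem~4.1]{Wang2018}. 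The main obstacle is the bookkeeping of factorials: combining $|C^l_m|=\sqrt{\tfrac{(2l+1)(l-|m|)!}{2\pi(l+|m|)!}}$, the $m!$ from differentiation, and the ellipse bound on $\mathrm{P}^l$, one must check by Stirling-type asymptotics that all the $m$-dependent factorials cancel and the net radial bound collapses to $\sqrt{2l+1}\,\widehat\rho^l$, uniformly in $|m|<l$ for $l$ larger than some $N_0$.

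Finally I would substitute the two maxima back into Theorem~\ref{thrm:quadbound}, yielding
\begin{align*}
|I^l_m - Q^l_m| \lesssim \sqrt{2l+1}\left[(2\rho)^{-2N_q^\theta} e^{\pi l \rho} + (2\rho)^{-2N_q}\,\widehat\rho^{\,l}\right],
\end{align*}
and then choose any $\widetilde\rho>1$ with $\widetilde\rho\ge\max\{e^{\pi\rho},\widehat\rho\}$ to absorb both geometric factors into a single $\widetilde\rho^{\,l}$, which is exactly the claimed estimate. I expect the radial factorial cancellation to be the only step requiring genuine care; the remaining manipulations are routine applications of results already quoted in the excerpt.
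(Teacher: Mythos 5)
Your proposal follows essentially the same route as the paper's own derivation: apply Theorem~\ref{thrm:quadbound}, absorb the $\rho$-analytic factor $g$ into the constant, bound the angular continuation of $p^l_m$ via \cite[Theorem~3]{LOHOFER1991226} to get $\sqrt{2l+1}\,e^{l\pi\rho}$, handle the radial continuation through the Rodrigues formula, the maximum-modulus transfer to a Bernstein ellipse, the Cauchy integral formula on a doubled ellipse and \cite[Theorem~4.1]{Wang2018}, check that the $m$-dependent factorials combine with $|C^l_m|$ to give $\sqrt{2l+1}\,\widehat\rho^{\,l}$, and finally absorb both geometric factors into a single $\widetilde\rho^{\,l}$. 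This matches the paper's argument step for step, including the one point you correctly flag as delicate (the factorial bookkeeping in the radial bound).
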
 

\begin{corollary}
\label{cor:quadtenzerror}
Under the hypothesis and notations of Lemma \ref{lemma:quaderror}, given another pair of integers $l',m'$ such that $|m'| <l' $ and $l'>N_0$, and $G(\vx,\vx') : \left([0,1] \times [\frac{-\pi}{2},\frac{3\pi}{2}] \right)^2 \rightarrow \mathbb{C}$, for the integral
\begin{align*}
I^{l,l'}_{m,m'} :=  \int_0^1 \int_{-\frac{\pi}{2}}^{\frac{3\pi}{2}} \int_0^1 \int_{-\frac{\pi}{2}}^{\frac{3\pi}{2}} G(\vx,\vx') p^l_m(\vx) p^{l'}_{m'}(\vx') r r' d\theta dr d\theta' dr',
\end{align*}
it holds that
\begin{align*}
|I^{l,l'}_{m,m'} - Q^{l,l'}_{m,m'} |  \lesssim 
\sqrt{(2l+1)(2l'+1)}\left( (2\rho)^{-2N_q^\theta}     + (2\rho)^{-2N_q} \right) \widetilde{\rho}^l\widetilde{\rho}'^{l'}
\end{align*}
where $Q^{l,l'}_{m,m'}$ denotes the Gauss-Legendre quadrature of orders $N_q$ in $r,r'$ and $N_q^\theta$ for $\theta,\theta'$, $\widetilde{\rho}> 1$ and $\widetilde{\rho}' > 1$, and the unspecified constant does not depend of $l,m,l',m'$.
\end{corollary}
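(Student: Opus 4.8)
The plan is to reduce the four-dimensional estimate to two applications of Lemma \ref{lemma:quaderror} by a telescoping argument in the two pairs of variables, exploiting the fact that the projected basis is only polynomially large on the real integration domain and grows like $\widetilde{\rho}^l$ only once a variable is complexified onto its Bernstein ellipse. Writing $F(\vx,\vx') := G(\vx,\vx')\,p^l_m(\vx)\,p^{l'}_{m'}(\vx')$, I would denote by $\mathcal{I}_\vx,\mathcal{I}_{\vx'}$ exact integration (against the measures $r\,d\theta\,dr$, $r'\,d\theta'\,dr'$) and by $\mathcal{Q}_\vx,\mathcal{Q}_{\vx'}$ the two-dimensional Gauss--Legendre rules in the $\vx$- and $\vx'$-blocks, so that $I^{l,l'}_{m,m'}=\mathcal{I}_\vx\mathcal{I}_{\vx'}[F]$ and $Q^{l,l'}_{m,m'}=\mathcal{Q}_\vx\mathcal{Q}_{\vx'}[F]$. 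The starting point is the identity
\begin{align*}
I^{l,l'}_{m,m'} - Q^{l,l'}_{m,m'} = (\mathcal{I}_\vx - \mathcal{Q}_\vx)\,\mathcal{I}_{\vx'}[F] + \mathcal{Q}_\vx\,(\mathcal{I}_{\vx'} - \mathcal{Q}_{\vx'})[F],
\end{align*}
which splits the error into a term where the $\vx'$-integration is exact and a term where the outer $\vx$-integration is replaced by quadrature.

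For the first term I would integrate $\vx'$ out and set $h(\vx) := \mathcal{I}_{\vx'}[G(\vx,\cdot)\,p^{l'}_{m'}]$; since $G$ is $\rho$-analytic in $\vx$ uniformly over the compact real $\vx'$-domain, differentiation under the integral sign shows $h$ is again $\rho$-analytic, so Lemma \ref{lemma:quaderror} applies to $(\mathcal{I}_\vx-\mathcal{Q}_\vx)[h\,p^l_m]$ and produces the factor $\sqrt{2l+1}\big((2\rho)^{-2N_q^\theta}+(2\rho)^{-2N_q}\big)\widetilde{\rho}^l$. The only delicate point is the dependence of the unspecified constant on $(l',m')$: returning to Theorem \ref{thrm:quadbound}, this constant is controlled by $\max|h|$ over the relevant Bernstein-ellipse region, and because $p^{l'}_{m'}$ is evaluated there only on the \emph{real} $\vx'$-domain, the identity $p^{l'}_{m'}=\sqrt{2}\,\Pi_\mathbb{S}(Y^{l'}_{m'})$ together with $\sup_\mathbb{S}|Y^{l'}_{m'}|\lesssim\sqrt{2l'+1}$ (equivalently, the $\rho\to 1$ limit of the estimates preceding Lemma \ref{lemma:quaderror}) gives $|h(\vx)|\lesssim\sqrt{2l'+1}\,\sup|G|$, contributing only a polynomial factor in $l'$.

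The second term is treated symmetrically. For each fixed real $\vx$, Lemma \ref{lemma:quaderror} applied in the $\vx'$-variables to $g=G(\vx,\cdot)$ bounds the inner error $e(\vx) := (\mathcal{I}_{\vx'}-\mathcal{Q}_{\vx'})[G(\vx,\cdot)\,p^{l'}_{m'}]$ by $\sqrt{2l'+1}\big((2\rho)^{-2N_q^\theta}+(2\rho)^{-2N_q}\big)\widetilde{\rho}'^{l'}\sup|G|$, uniformly in $\vx$. It then remains to absorb the outer quadrature $\mathcal{Q}_\vx[e\,p^l_m]$; here I would use that Gauss--Legendre weights are positive and sum to the finite measure of the domain, so that $\mathcal{Q}_\vx$ is stable and $|\mathcal{Q}_\vx[e\,p^l_m]|\lesssim \max_i|e(\vx_i)|\,\max_i|p^l_m(\vx_i)|$ over the real nodes $\vx_i$. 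The real-domain bound $|p^l_m(\vx_i)|\lesssim\sqrt{2l+1}$ again contributes only $\sqrt{2l+1}$, so this term carries the factor $\sqrt{(2l+1)(2l'+1)}\big((2\rho)^{-2N_q^\theta}+(2\rho)^{-2N_q}\big)\widetilde{\rho}'^{l'}$.

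Adding the two contributions yields the claimed prefactor but with the sum $\widetilde{\rho}^l+\widetilde{\rho}'^{l'}$ in place of the product; since $\widetilde{\rho},\widetilde{\rho}'>1$ one has $\widetilde{\rho}^l+\widetilde{\rho}'^{l'}\le 2\,\widetilde{\rho}^l\widetilde{\rho}'^{l'}$, which gives the stated estimate after renaming constants. I expect the main obstacle to be exactly the bookkeeping of how the unspecified constants depend on the \emph{other} index pair in each block; the resolution is the structural observation that the exponential growth $\widetilde{\rho}^l$ of the projected basis is activated only when a variable is pushed onto its Bernstein ellipse, whereas on real nodes and in the exactly-integrated block the basis is merely $O(\sqrt{2l+1})$, so each block contributes its exponential factor precisely once.
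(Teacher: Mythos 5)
The paper states this corollary without proof, leaving the tensorization of Lemma \ref{lemma:quaderror} implicit, and your telescoping decomposition of the error into the $\vx$-block and $\vx'$-block contributions is exactly the expected argument and is correct. In particular, you correctly isolate and resolve the one genuinely delicate point---that the constant produced by applying Lemma \ref{lemma:quaderror} in one block depends on the other index pair only through the real-domain bound $|p^{l'}_{m'}|\lesssim\sqrt{2l'+1}$ (so each exponential factor $\widetilde{\rho}^{l}$, $\widetilde{\rho}'^{l'}$ is activated exactly once, by the complexified block)---and the final step $\widetilde{\rho}^{l}+\widetilde{\rho}'^{l'}\le 2\,\widetilde{\rho}^{l}\widetilde{\rho}'^{l'}$ is valid since both factors are at least $1$.
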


\subsection{Fully Discrete Error Analysis:}
Recall Problem \ref{prob:discreteBIEs}, where the unknowns are vectors of dimension $I(N) = \frac{N(N+1)}{2}+N$. Let us  now consider the same problem where the corresponding matrices and righ-hand sides are approximated with the quadrature method detailed in Section \ref{sec:MatrixComputations}.

\begin{problem}
\label{prob:FullDiscrete}
Find $\vlambda^{N,q},\vnu^{N,q}  \in \mathbb{C}^{I(N)+1}$ such that 
\begin{align}
V^{N,q}[k] \vlambda^{N,q} = \vg_d^{N,q}, \quad \text{(Fully discrete Dirichlet BIE)}, \\
W^{N,q}[k] \vnu^{N,q} = \vg_n^{N,q}, \quad \text{(Fully discrete Neumann BIE)},
\end{align}
where $V^{N,q}[k]$ (resp.~$W^{N,q}[k],\vg_d^{N,q}, \vg_n^{N,q}$) is the quadrature approximation to $V^N[k]$ (resp.~$W^{N}[k],\vg_d^{N}, \vg_n^{N}$) constructed as described in Section \ref{sec:MatrixComputations}. 
\end{problem}
The approximations obtained from the fully discrete problems are written
\begin{align*}
\lambda^{N,q}_e = \sum_{m=0}^{I(N)} \lambda^{N,q}_m q_m^{e,r},\quad
\nu^{N,q}_o = \sum_{m=0}^{I(N)} \nu^{N,q}_m p_m^{o,r}.
\end{align*}
We refer to Problem \ref{prob:FullDiscrete} as fully discrete problems. For their analysis, we detail the Dirichlet case as the Neumann case follows similar ideas with results for both is reported at the end of the section.

We first estimate the quadrature error between matrices $V^N[k]-V^{N,q}[k]$.  Recall the notation introduced at beginning of Section \ref{sec:DiscreteProblems}. For simplicity, the number quadrature points is determined by only two variables $N_q^\theta$ and $N_q$. Following the notation in Section \ref{sec:NumImplementaiton}, this corresponds to the simpler case $N_q^\alpha = N_q$ for every $\alpha \in \{a,b,c,d,e,f\}$. For different values of $N_q^\alpha$, the results are essentially the similar but we forgo this analysis for the sake of brevity.
\begin{lemma}
\label{lemma:quadError}
There is $N_0 \in \mathbb{N}$, such that for $N>N_0$, given vectors $\boldsymbol{\mu}^N$, $\boldsymbol{\psi}^N\in \mathbb{C}^{I(N)+1}$, we have that 
\begin{align*}
\begin{split}
\left|\boldsymbol{\psi}^N \cdot \left(V^N[k]-V^{N,q}[k] \right) \boldsymbol{\mu}^N \right| \lesssim \widetilde{\rho}^{2 N}N^4 \left[(2\rho)^{-2N_q}+(2\rho)^{-2N^\theta_q}\right]\\
\times \| \mu_e^N \|_{\widetilde{H}^{-\half}(\Gamma)} \| \psi_e^N \|_{\widetilde{H}^{-\half}(\Gamma)}, 
\end{split}
\end{align*}
where, as before, $\rho>1,\widetilde{\rho} >1$.
\end{lemma}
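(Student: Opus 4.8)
The plan is to reduce the bilinear quadrature error to the canonical per-entry estimate of Corollary \ref{cor:quadtenzerror} and then sum over all basis indices, tracking how the exponential-in-$l$ growth interacts with the Sobolev norms. First I would expand the bilinear form in the discretization basis, writing
\begin{align*}
\boldsymbol{\psi}^N \cdot \left(V^N[k]-V^{N,q}[k]\right)\boldsymbol{\mu}^N = \sum_{l,m=0}^{I(N)} \overline{\psi^N_l}\,\mu^N_m\left(V_{l,m}[k]-V^q_{l,m}[k]\right),
\end{align*}
where each entry $V_{l,m}[k]$ decomposes into the split integrals $I^\alpha_{m,l}$, $\alpha\in\{a,b,c,d,e,f\}$, from Section \ref{sec:AproxSingInt}. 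Each of these, after the analytic changes of variables described there, is a tensorized integral of the canonical form covered by Corollary \ref{cor:quadtenzerror}, so for each entry I obtain
\begin{align*}
\left|V_{l,m}[k]-V^q_{l,m}[k]\right| \lesssim \sqrt{(2\cl+1)(2\cl'+1)}\left((2\rho)^{-2N_q^\theta}+(2\rho)^{-2N_q}\right)\widetilde{\rho}^{\,\cl+\cl'},
\end{align*}
where $\cl,\cl'\leq N$ are the degree indices corresponding to the one-dimensional labels $l,m$ (recall $I(N)=I_e(N,N)$ bounds the degree by $N$).

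Next I would insert this per-entry bound and factor the double sum. The point is that the geometric factor $\widetilde{\rho}^{\,\cl+\cl'}$ and the $\sqrt{2\cl+1}$ factors depend only on the degree, so I can absorb them into weighted coefficient vectors. Writing the quadrature prefactor as $Q:=(2\rho)^{-2N_q^\theta}+(2\rho)^{-2N_q}$, I bound
\begin{align*}
\left|\boldsymbol{\psi}^N\cdot\left(V^N[k]-V^{N,q}[k]\right)\boldsymbol{\mu}^N\right| \lesssim Q\left(\sum_{l}|\psi^N_l|\sqrt{2\cl+1}\,\widetilde{\rho}^{\,\cl}\right)\left(\sum_{m}|\mu^N_m|\sqrt{2\cl'+1}\,\widetilde{\rho}^{\,\cl'}\right).
\end{align*}
Since every degree index is at most $N$, I crudely dominate $\sqrt{2\cl+1}\,\widetilde{\rho}^{\,\cl}\leq\sqrt{2N+1}\,\widetilde{\rho}^{\,N}$, pulling the worst-case growth $\sqrt{2N+1}\,\widetilde{\rho}^{\,N}$ out of each sum. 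The residual sums $\sum_l|\psi^N_l|$ and $\sum_m|\mu^N_m|$ are $\ell^1$ norms of the coefficient vectors over $I(N)+1=O(N^2)$ entries, which by Cauchy--Schwarz are controlled by $\sqrt{I(N)+1}\lesssim N$ times the $\ell^2$ norm of the coefficients.

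The final step is to convert the Euclidean coefficient norms into the Sobolev norms $\|\mu^N_e\|_{\widetilde{H}^{-1/2}(\Gamma)}$ and $\|\psi^N_e\|_{\widetilde{H}^{-1/2}(\Gamma)}$. Here I would use Lemma \ref{lemma:SobolevLocalization} to pass to the disk, then the relation $\widetilde{H}^{-1/2}(\mathbb{D})\supset Q^{-1/4}_e(\mathbb{D})$ (and the reverse inclusion) from Lemma \ref{lemma:sobRev}, recalling that the $Q^s_e(\mathbb{D})$ norm of a finite expansion $\sum u^l_m q^l_m$ is, up to the polynomial weights $(l+1)^{2s}$, the Euclidean norm of its coefficients. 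On the finite-dimensional space spanned by degrees up to $N$, all these norm equivalences cost at most a further polynomial factor in $N$, which I fold into the $N^4$ already accumulated (the $N$ from $\ell^1\!\to\!\ell^2$, combined with the two $\sqrt{2N+1}$ factors and slack). Collecting the prefactors yields $\widetilde{\rho}^{\,2N}N^4\,Q$ times the two Sobolev norms, as claimed. The main obstacle I anticipate is bookkeeping the exponent: one must verify that the constants $\widetilde{\rho}$ arising per variable can be unified into a single base (replacing distinct $\widetilde{\rho},\widetilde{\rho}'$ by their maximum) so that the product collapses to $\widetilde{\rho}^{\,2N}$, and that the polynomial-in-$N$ overhead from counting $O(N^2)$ terms and from the Sobolev norm equivalences genuinely stays within the stated $N^4$ rather than leaking into the exponential factor.
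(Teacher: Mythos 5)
Your proposal is correct and follows essentially the same route as the paper: expand the bilinear form, apply Corollary \ref{cor:quadtenzerror} entrywise, unify $\widetilde{\rho},\widetilde{\rho}'$ into a single base, factor the sum, and convert coefficient norms to $\widetilde{H}^{-\half}(\Gamma)$ norms via Lemmas \ref{lemma:sobRev} and \ref{lemma:SobolevLocalization}. The only (harmless) difference is bookkeeping: the paper performs a single weighted Cauchy--Schwarz that lands directly on the $Q_e^{-\half}(\mathbb{D})$ norm, while you supremize the degree-dependent factors and pass through plain $\ell^1/\ell^2$ norms; both assemble the same $N^4$ prefactor.
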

\begin{proof}
Expanding matrix products yields
\begin{align*}
\begin{split}
&\left|\boldsymbol{\psi}^N \cdot \left(V^N[k]-V^{N,q}[k] \right) \boldsymbol{\mu}^N \right|
= \\
&\left\vert \sum_{l'=0}^N \sum_{\substack{m'=-l'\\ m'+l' \text{ even}}}^{l'} \overline{ \psi^N_{I_e(l',m')}} 
\sum_{l=0}^N \sum_{\substack{m=-l\\ m+l \text{ even}}}^{l} \left(V^N[k]-V^{N,q}[k] \right)_{I_e(l',m'),I_e(l,m)} \mu^N_{I_e(l,m)}
\right\vert,
\end{split} 
\end{align*}
where the index $I_e(l,m) = \frac{l(l+1)+(l+m)}{2}$ was defined in Section \ref{sec:SH}. Now, $V^N[k]$ (see Section \ref{sec:DiscreteProblems}) is a matrix whose entries can be written as the sum of integrals $I^a,I^b,I^c,I^d,I^e,I^f$, and $V^{N,q}[k]$ is the sum corresponding to quadratures approximations  $Q^a,Q^b, Q^c,Q^d, Q^e,Q^f$ as described in Section \ref{sec:AproxSingInt}. By construction, we can use\footnote{Each of the integrals $I^\alpha, \alpha \in \{ a,b,c,d,e,f\}$ have different integration domain but they could be fixed using smooth window functions that extend the integrands to the required domain so the application of the Corollary directly. } Corollary \ref{cor:quadtenzerror} and obtain  
\begin{align*}
\begin{split}
&\left|\boldsymbol{\psi}^N \cdot \left(V^N[k]-V^{N,q}[k] \right) \boldsymbol{\mu}^N \right|
\lesssim \left[(2\rho)^{-2N_q^\theta}+(2\rho)^{-2N_q}\right]\\&{\times} \left\vert\sum_{l'=0}^N \sum_{\substack{m'=-l'\\ m'+l' \text{ even}}}^{l'} \sum_{l=0}^N \sum_{\substack{m=-l\\ m+l \text{ even}}}^{l}  \sqrt{(2l+1)(2l'+1)}
 \left(\widetilde{\rho}^{'l'} \widetilde{\rho}^l  \right)  \overline{\psi^N_{I_e(l',m')}}  \mu^N_{I_e(l,m)}  \right\vert
\end{split}.
\end{align*}
Redefining $\widetilde{\rho}:= \max \{\widetilde{\rho}', \widetilde{\rho} \}$  leads to
\begin{align*}
\begin{split}
&|\boldsymbol{\psi}^N \cdot \left(V^N[k]-V^{N,q}[k] \right) \boldsymbol{\mu}^N |
\lesssim  \widetilde{\rho}^{2N} \left[(2\rho)^{-2N_q^\theta}+(2\rho)^{-2N_q}) \right]\\
&\times\left\vert \sum_{l'=0}^N \sum_{\substack{m'=-l'\\ m'+l' \text{ even}}}^{l'} \sum_{l=0}^N \sum_{\substack{m=-l\\ m+l \text{ even}}}^{l} \sqrt{(2l+1)(2l'+1)} \overline{\psi^N_{I_e(l',m')} } \mu^N_{I_e(l,m)}   \right\vert.
\end{split}
\end{align*}
Applying the Cauchy-Schwartz inequality one obtains
\begin{align*}
\begin{split}
&|\boldsymbol{\psi}^N \cdot \left(V^N[k]-V^{N,q}[k] \right) \boldsymbol{\mu}^N |
\lesssim   \widetilde{\rho}^{2N} \left[(2\rho)^{-2N_q^\theta}+(2\rho)^{-2N_q}) \right]\\
&\times N^4
\left(\sum_{l'=0}^N \sum_{\substack{m'=-l'\\ m'+l' \text{ even}}}^{l'} \frac{(\psi^N_{I_e(l',m')})^2}{l'+1}\right)^{\half} \left( \sum_{l=0}^N \sum_{\substack{m=-l\\ m+l \text{ even}}}^{l}  \frac{(\mu^N_{I_e(l,m)})^2}{l+1}   \right)^{\half}
\end{split},
\end{align*}
and by the auxiliary spaces norms definitions, it holds that
\begin{align*}
\begin{split}
|\boldsymbol{\psi}^N \cdot \left(V^N[k]-V^{N,q}[k] \right) \boldsymbol{\mu}^N |
&\lesssim   \widetilde{\rho}^{2N} \left[(2\rho)^{-2N_q^\theta}+(2\rho)^{-2N_q}) \right]\\
&\times N^4
\| \psi^N \circ \vr \|_{Q_e^{-\half}(\mathbb{D})} \| \mu^N \circ \vr \|_{Q_e^{-\half}(\mathbb{D})}.
\end{split}
\end{align*}
The results follows directly from Lemmas \ref{lemma:sobRev} and \ref{lemma:SobolevLocalization}.
\end{proof}

We notice that for any fixed value of $N$ the error term 
\begin{align*}
\widetilde{\rho}^{2N} \left[(2\rho)^{-2N_q^\theta}+(2\rho)^{-2N_q}) \right] N^4
\end{align*}
goes to zero as the quadrature order increases. Thus, we can use the standard Strang's Lemma to bound the fully discrete error. 

\begin{theorem}[Theorem 4.2.11 in \cite{Sauter:2011}]
\label{theo:dirichconverg}
There exists $N_0>0$ such that for every $N>N_0$, there is a $N_{q,0}$ that depends on $N$ such that, if $N_q^\theta$, $N_q$ are both greater than $N_{q,0}$, the fully discrete Dirichlet problem \ref{prob:FullDiscrete} has a unique solution, and the following error estimate follows for $s > -\fourth$:
\begin{align}
\label{eq:direstimation}
\begin{split}
\| \lambda - \lambda_e^{N,q} \|_{\widetilde{H}^{-\half}(\Gamma)} &\lesssim N^{-\fourth-s}\| \lambda \circ \vr \|_{Q^s_e(\mathbb{D})}\\ &+\widetilde{\rho}^{2N} \left[(2\rho)^{-2N_q^\theta}+(2\rho)^{-2N_q}) \right] N^4\|\lambda\|_{\widetilde{H}^{-\half}(\Gamma)}\\
&+\widetilde{\rho}^{N} \left[(2\rho)^{-2N_q^\theta}+(2\rho)^{-2N_q}) \right] N^2,
\end{split}
\end{align}
where $\lambda$ denotes the continuous solution of the Dirichlet BIE \eqref{eq:bies}.
\end{theorem}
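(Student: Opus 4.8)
The plan is to invoke the first Strang Lemma (Theorem 4.2.11 in \cite{Sauter:2011}) with the exact sesquilinear form $a(\mu,\psi) = \langle V_\Gamma[k]\mu,\psi\rangle_\Gamma$ on $\mathbb{Q}^N_e(\Gamma)\times\mathbb{Q}^N_e(\Gamma)$ and its quadrature-perturbed counterpart $a^q$ encoded by $V^{N,q}[k]$, together with the exact linear functional $\psi\mapsto\langle g_d,\psi\rangle_\Gamma$ and its quadrature version recorded in $\vg_d^{N,q}$. Strang's Lemma bounds $\|\lambda-\lambda_e^{N,q}\|_{\widetilde{H}^{-\half}(\Gamma)}$ by three contributions: the best-approximation error over $\mathbb{Q}^N_e(\Gamma)$, the consistency error of the perturbed form evaluated at a fixed discrete approximant, and the consistency error of the perturbed right-hand side. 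I would then match these three contributions to the three terms on the right of \eqref{eq:direstimation}.

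First I would establish well-posedness of Problem \ref{prob:FullDiscrete} by a perturbation argument. By Lemma \ref{lemma:qopti}, for $N>N_0$ the exact discrete operator $V^N[k]$ is invertible with a (possibly $N$-dependent) discrete inf-sup constant. Lemma \ref{lemma:quadError} bounds the perturbation $V^N[k]-V^{N,q}[k]$, measured against the $\widetilde{H}^{-\half}(\Gamma)$ discrete norms, by $\widetilde{\rho}^{2N}N^4[(2\rho)^{-2N_q^\theta}+(2\rho)^{-2N_q}]$. For fixed $N$ this quantity tends to zero as $N_q^\theta,N_q\to\infty$, so one may choose $N_{q,0}$ (depending on $N$) making the perturbation strictly smaller than the discrete stability constant; a Neumann-series argument then yields existence, uniqueness, and a uniform bound on the perturbed discrete inverse. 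This is exactly why $N_{q,0}$ is allowed to depend on $N$.

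With stability in hand I would apply Strang's Lemma and identify each term. The best-approximation term is controlled exactly as in Theorem \ref{thrm:rateConvergence}, by choosing the same discrete approximant $v\in\mathbb{Q}^N_e(\Gamma)$ used there, producing $N^{-\fourth-s}\|\lambda\circ\vr\|_{Q^s_e(\mathbb{D})}$. The form-consistency term is $\sup_{\psi}|a(v,\psi)-a^q(v,\psi)|/\|\psi\|_{\widetilde{H}^{-\half}(\Gamma)}$ with $v$ that same approximant; since $\|v\|_{\widetilde{H}^{-\half}(\Gamma)}\lesssim\|\lambda\|_{\widetilde{H}^{-\half}(\Gamma)}$, Lemma \ref{lemma:quadError} yields the second term $\widetilde{\rho}^{2N}[(2\rho)^{-2N_q^\theta}+(2\rho)^{-2N_q}]N^4\|\lambda\|_{\widetilde{H}^{-\half}(\Gamma)}$. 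For the right-hand side consistency, each entry $(\vg_d^N-\vg_d^{N,q})_l$ is a single pulled-back smooth integral of canonical type, so Lemma \ref{lemma:quaderror} bounds it by $\sqrt{2l+1}[(2\rho)^{-2N_q^\theta}+(2\rho)^{-2N_q}]\widetilde{\rho}^l$; pairing against $\psi^N$ and applying Cauchy--Schwarz together with the $Q_e^{-\half}(\mathbb{D})$ norm definition (as in the proof of Lemma \ref{lemma:quadError}, but now with a single index) produces the factor $N^2$ and the single power $\widetilde{\rho}^N$, i.e.\ the third term.

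The main obstacle is the stability step: the exact discrete inf-sup constant may deteriorate as $N$ grows, so the perturbation bound $\widetilde{\rho}^{2N}N^4[(2\rho)^{-2N_q^\theta}+(2\rho)^{-2N_q}]$ must be beaten by taking the quadrature orders sufficiently large \emph{for each} $N$, which is precisely the meaning of $N_{q,0}=N_{q,0}(N)$; keeping track of the competing exponential factors $\widetilde{\rho}^{2N}$ and $(2\rho)^{-2N_q}$ is the delicate bookkeeping. The Neumann case follows identically, replacing $V_\Gamma[k]$, $\mathbb{Q}^N_e(\Gamma)$, $q^{e,r}_l$, and Lemma \ref{lemma:quadError} by their hyper-singular analogues built from $W_\Gamma[k]$, $\mathbb{P}^N_o(\Gamma)$, and $p^{o,r}_l$, which accounts for the shifted exponents in the corresponding estimate.
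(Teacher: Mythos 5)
Your proposal is correct and follows essentially the same route as the paper: both invoke Strang's Lemma (Theorem 4.2.11 in \cite{Sauter:2011}) for well-posedness and the three-term splitting, control the form-consistency term via Lemma \ref{lemma:quadError}, bound the right-hand-side consistency by the single-integral analogue of that lemma (yielding the factors $\widetilde{\rho}^{N}N^2$), and close the best-approximation term with Theorem \ref{thrm:rateConvergence}. Your added detail on the perturbation/Neumann-series argument for stability is simply an expansion of what the paper delegates to the cited reference.
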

\begin{proof} 
Existence and uniqueness are obtained following \cite[Theorem 4.2.11]{Sauter:2011}. Moreover from the same reference, it holds that
\begin{align*}
\begin{split}
\| \lambda - \lambda^{N,q}_e \|_{\widetilde{H}^{-\half}(\Gamma)} \lesssim \inf_{v^N \in \mathbb{Q}^e_N(\Gamma)} \left( \|\lambda - v^N\|_{\widetilde{H}^{-\half}(\Gamma)} +c(N_q,N) \|v^N \|_{\widetilde{H}^{-\half}(\Gamma)} \right)\\+\sup_{v^N \in \mathbb{Q}^e_N(\Gamma) } \frac{\boldsymbol{v}^N \cdot (\vg_d^N- \vg_d^{N,q})}{\| v^N \|_{\widetilde{H}^{-\half}(\Gamma)}},
\end{split}
\end{align*}
where $c(N_q,N) {:=} \widetilde{\rho}^{2N} \left[(2\rho)^{-2N_q^\theta}+(2\rho)^{-2N_q}) \right] N^4$. The second term on the right-hand side can be estimated following the same ideas of Lemma \ref{lemma:quadError}, and we obtain:
\begin{align*}
\sup_{v^N \in \mathbb{Q}^e_N(\Gamma) } \frac{\boldsymbol{v}^N \cdot (\vg_d^N- \vg_d^{N,q})}{\| v^N \|_{\widetilde{H}^{-\half}(\Gamma)}} \lesssim \widetilde{\rho}^N \left[(2\rho)^{-2N_q^\theta}+(2\rho)^{-2N_q}) \right] N^2.
\end{align*}
With this, the result is obtained following Theorem \ref{thrm:rateConvergence}.
\end{proof}

\begin{remark}
In the right-hand side of \eqref{eq:direstimation}, the last term grows as $\widetilde{\rho}^N N^2$ due a to single numeric quadrature whereas the second term does at rate of $\widetilde{\rho}^{2N} N^4$ as it arises from the tensorization of two quadrature rules.
\end{remark}

In a similar fashion, we obtain the equivalent result for the Neumann problem. 
\begin{theorem}
\label{theo:neumannconverg}
There exists $N_0>0$ such that for every $N>N_0$, there is a $N_{q,0}$ that depends on $N$ such that, if $N_q^\theta$, $N_q$ is both greater than $N_{q,0}$, the fully discrete Neumann Problem \ref{prob:FullDiscrete} has a unique solution, and the following error estimate follows for $s > \half$: 
\begin{align}
\label{eq:neuestimation}
\begin{split}
\| \nu - \nu^{N,q}_o \|_{\widetilde{H}^{\half}(\Gamma)} \lesssim & \ N^{\half-s}\| \nu \circ \vr \|_{P^s_o(\mathbb{D})}+\widetilde{\rho}^{2N} \left[(2\rho)^{-2N_q^\theta}+(2\rho)^{-2N_q}) \right] N^2\|\nu\|_{\widetilde{H}^{\half}(\Gamma)}\\ 
&+\widetilde{\rho}^{N} \left[(2\rho)^{-2N_q^\theta}+(2\rho)^{-2N_q}) \right] N.
\end{split}
\end{align}
\end{theorem}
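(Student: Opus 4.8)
The plan is to mirror the Dirichlet argument of Theorem~\ref{theo:dirichconverg}: invoke Strang's Lemma (Theorem 4.2.11 in \cite{Sauter:2011}) for the perturbed Galerkin scheme, which --- once the matrix quadrature error is made small enough by taking $N_q^\theta, N_q$ large --- yields existence, uniqueness and the abstract estimate
\begin{align*}
\| \nu - \nu^{N,q}_o \|_{\widetilde{H}^{\half}(\Gamma)} &\lesssim \inf_{v^N \in \mathbb{P}^o_N(\Gamma)} \left( \| \nu - v^N \|_{\widetilde{H}^{\half}(\Gamma)} + c'(N_q,N) \| v^N \|_{\widetilde{H}^{\half}(\Gamma)} \right) \\
&\quad + \sup_{v^N \in \mathbb{P}^o_N(\Gamma)} \frac{\boldsymbol{v}^N \cdot ( \vg_n^N - \vg_n^{N,q})}{\| v^N \|_{\widetilde{H}^{\half}(\Gamma)}},
\end{align*}
where $c'(N_q,N)$ is a bound on the consistency error of the hyper-singular matrix, i.e.~the Neumann analogue of Lemma~\ref{lemma:quadError}. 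I would first dispatch the infimum term via Theorem~\ref{thrm:rateConvergence}, choosing $v^N$ as the pulled-back projection $\Pi^{o,p}_N(\nu \circ \vr)$; a triangle inequality with the same approximation estimate controls $\| v^N \|_{\widetilde{H}^{\half}(\Gamma)} \lesssim \| \nu \|_{\widetilde{H}^{\half}(\Gamma)}$, so this contributes the first two summands of \eqref{eq:neuestimation}.

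The core of the work is the Neumann version of Lemma~\ref{lemma:quadError}, namely an index-explicit bound on $|\boldsymbol{\psi}^N \cdot ( W^N[k] - W^{N,q}[k]) \boldsymbol{\mu}^N|$, and I would start from the integration-by-parts splitting \eqref{eq:Wsplit}. The zeroth-order ($k^2$) term is structurally identical to the weakly-singular integrals, only tested against the smooth-times-weight basis $p^o$ rather than $q^e$, so Corollary~\ref{cor:quadtenzerror} applies directly and produces factors $\sqrt{(2l+1)(2l'+1)}\,\widetilde{\rho}^l\widetilde{\rho}'^{l'}$. The $\mathrm{curl}_\Gamma$--$\mathrm{curl}_\Gamma$ term requires the derivative identities \eqref{eq:derivativeTheta} and its radial counterpart, which re-express $\mathrm{curl}_\Gamma p^o_m$ as a finite linear combination of the $q^l_{m\pm 1}$ with coefficients $a^l_m,b^l_m$ from \eqref{eq:devconstants}; since $|a^l_m|,|b^l_m| \lesssim l$, each surface curl injects an extra algebraic factor in the index. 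Applying Corollary~\ref{cor:quadtenzerror} to each resulting $q$-type tensor integral, summing over the (finitely many) shifted indices, and then performing Cauchy--Schwarz exactly as in Lemma~\ref{lemma:quadError} yields a bound of the form $\widetilde{\rho}^{2N}\big[(2\rho)^{-2N_q^\theta}+(2\rho)^{-2N_q}\big]$ times an algebraic power of $N$ times $\| \psi^N \circ \vr \|$ and $\| \mu^N \circ \vr \|$ measured in suitable $P^s_o$-type norms; the inclusions \eqref{lemsobRev3} together with Lemma~\ref{lemma:SobolevLocalization} then convert these into $\widetilde{H}^{\half}(\Gamma)$-norms, fixing $c'(N_q,N) = \widetilde{\rho}^{2N}\big[(2\rho)^{-2N_q^\theta}+(2\rho)^{-2N_q}\big] N^2$.

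For the right-hand side term I would bound each perturbed entry $\langle g_n, p^{o,r}_l \rangle_\Gamma$ by Lemma~\ref{lemma:quaderror}, giving per-index factors $\sqrt{2l+1}\,\widetilde{\rho}^l$; a single Cauchy--Schwarz step against the $P^s_o$-weights and the inclusion \eqref{lemsobRev3} then produce the last summand $\widetilde{\rho}^{N}\big[(2\rho)^{-2N_q^\theta}+(2\rho)^{-2N_q}\big] N$, which carries one fewer power of $N$ and $\widetilde{\rho}^N$ in place of $\widetilde{\rho}^{2N}$ precisely because only one quadrature rule, rather than a tensorised pair, is involved. Assembling the three contributions gives \eqref{eq:neuestimation}.

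I expect the curl--curl term to be the principal obstacle. Unlike the Dirichlet case, the surface curl simultaneously lowers the regularity of the trial and test functions and introduces the growing coefficients $a^l_m,b^l_m$, so the index bookkeeping that turns Corollary~\ref{cor:quadtenzerror} into the stated polynomial factor $N^2$ is considerably more delicate: one must verify that the extra algebraic growth coming from the curl is offset by the higher-regularity weights $(l+1)^{2s}$ attached to $\widetilde{H}^{\half}$ through $P^s_o$, rather than to $\widetilde{H}^{-\half}$ as in the weakly-singular analysis. A secondary technical point, already implicit in Lemma~\ref{lemma:quadError}, is that the sub-integrals $I^\alpha$ have mismatched integration domains, so smooth window/cut-off functions are needed to extend the integrands before Corollary~\ref{cor:quadtenzerror} can be applied termwise.
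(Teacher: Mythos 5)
Your outline coincides with the paper's proof: the paper does not write out the Neumann case at all, stating only that it follows ``in a similar fashion'' to Theorem \ref{theo:dirichconverg}, and your plan is exactly that transplantation --- Strang's Lemma, the best-approximation bound from Theorem \ref{thrm:rateConvergence}, a Neumann analogue of Lemma \ref{lemma:quadError} built from \eqref{eq:Wsplit} and Corollary \ref{cor:quadtenzerror}, and a single-quadrature bound for the right-hand side. The curl--curl bookkeeping you flag (the coefficients $a^\cl_\cm$, $b^\cl_\cm$ growing like $l$, and which auxiliary-space weight is actually available through \eqref{lemsobRev3}) is precisely the point the paper leaves implicit, so your caution there is warranted rather than a defect of your argument.
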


\begin{remark}
The differences between the second (resp.~third) term in the right hand-sides of \eqref{eq:direstimation} and \eqref{eq:neuestimation} is caused for the norm used to measure the error in each case. 
\end{remark}

\begin{remark}
\label{rem:linearNq}
From the fully discrete error analysis, one concludes that the number of quadrature points should be a linear function of the parameter $N$ so as to obtain good approximations of the BIOs. 
\end{remark}

\section{Numerical Results}
\label{sec:numres3d}
In what follows, we conduct a series of numerical experiments to verify our claims, showcase insights and show limitations of the provided results. These computational results were carried out on a desktop PC I7-4790k with 8Gb of RAM.
\subsection{Quadrature Results}
\begin{figure}
  \subfigure[
	$I^a,I^b,I^c$ 
  ]{   
    \includegraphics[width=.47\textwidth]{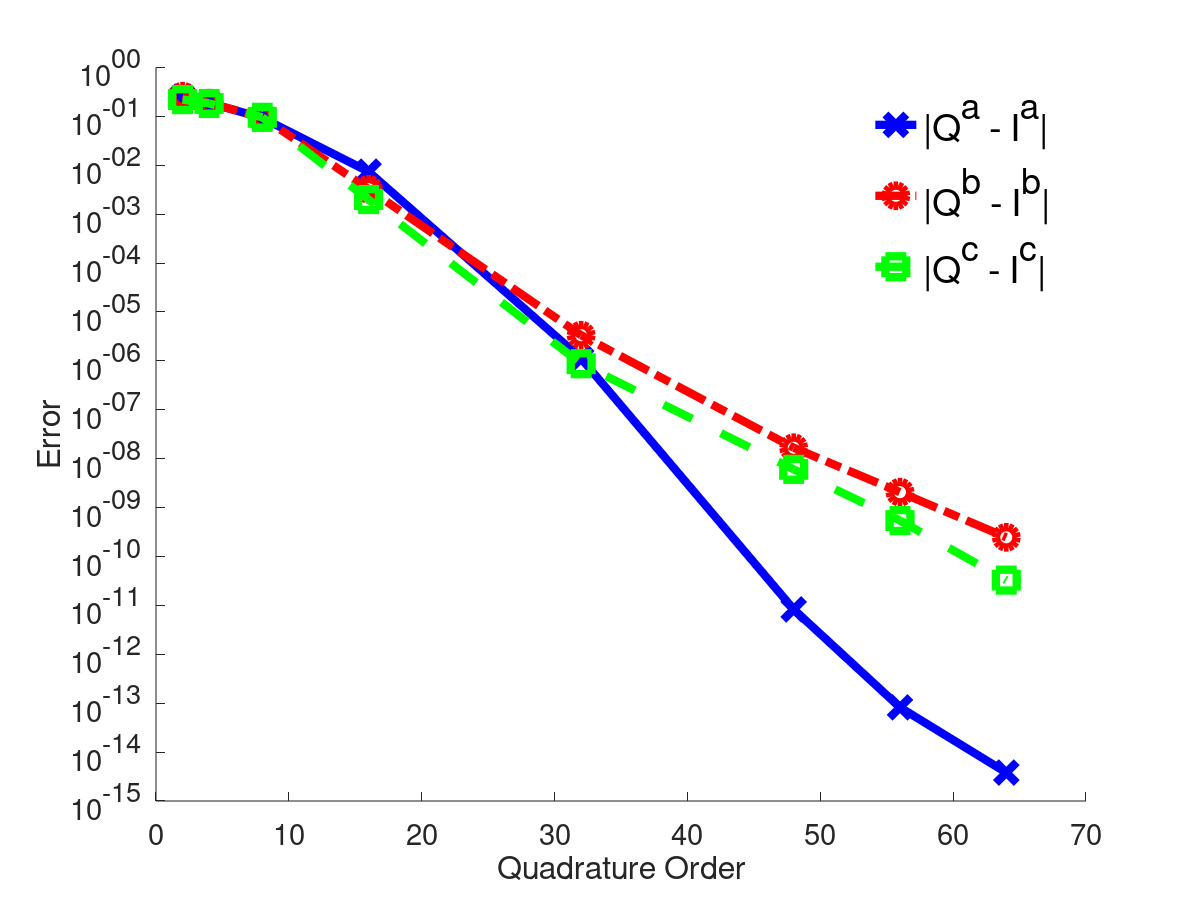}
  }
  \subfigure[
    $I^d,I^e$
  ]{
  \includegraphics[width=.47\textwidth]{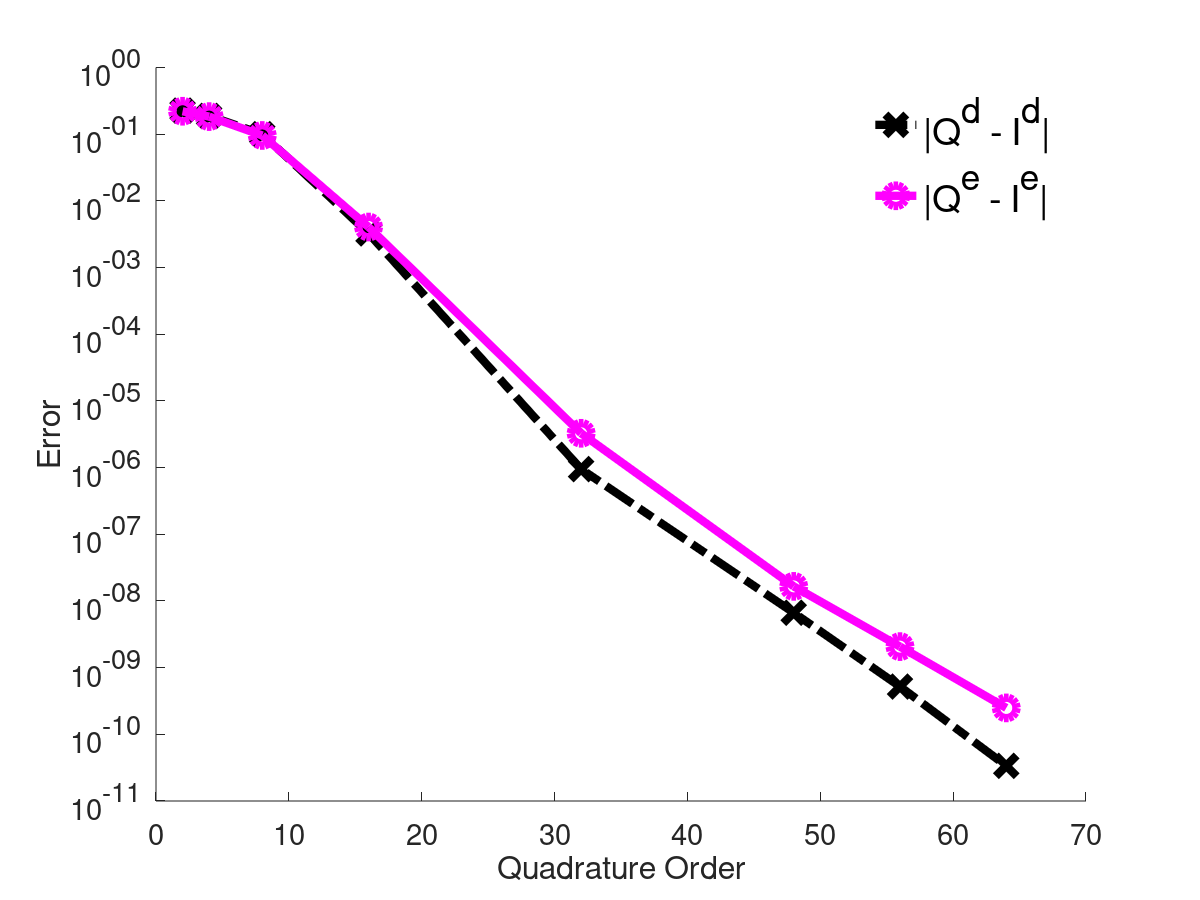}
  }
  \caption{ 
Quadrature errors for $k = 2.8$, computed against an overkill with $N_q = 76$. The error is
computed by taking the 2-norm ($\|A\|_2 =  \sup_{\vv \neq 0}\frac{\vv \cdot A \vv}{\|\vv\|}$)  we fix $N = 8$, (45 degrees of freedom).
}
\label{fig:quad1}
\end{figure} 
Before studying the accuracy of the full spectral Galerkin method, we consider the performance of the quadrature procedure detailed in Section \ref{sec:AproxSingInt}. To this end, we consider the screen $\Gamma$ given by the following parametrization: 
$$\vr(r,\theta) = r( \cos  \theta, 2.8\sin\theta,-0.56 r).$$
We compute the integrals $I^\alpha$, $\alpha \in \{a,b,c,d,e\}$ for an increasing number of quadrature points. In particular, we only select the variable $N_q^\alpha$ and fix $N_q^\theta = N_q^\alpha+ 12$. Results reported in Figure \ref{fig:quad1} show that quadrature errors decay linearly in the log-linear scale --exponential decay-- as described in \eqref{eq:quaderror0}.

As a second test, we consider the same screen and wavenumber for an increasing number of functions and quadrature points. As before, we only modify the variable $N_q^\alpha$, $\alpha \in{a,b,c,d,e,f}$ and fix $N_q^\theta = N_q^\alpha +12$. The results are presented in Table \ref{tab:errorA}. In contrast to the previous experiment, we show the error for the consolidated variable $V^N[k] - V^{N,q}[k]$ and observe that the quadrature error is almost constant when the quadrature points increase linearly with $N$ as stated in Remark \ref{rem:linearNq}.

\begin{table}
\centering
\caption{Quadrature error for the weakly-singular operator with $k = 2.8$ computed against an overkill with $N_q = 92, N=24$. The rule for increasing the quadrature points with $N$ is  $N_q(N) = 1.75 N +50$. The error again is computed as the 2-norm of the approximation of the weakly-singular operator matrix.}
\begin{tabular}{l|llllll}
$N$    & 2        & 6         & 14       & 16       & 18  & 20       \\ 
\hline
$N_q$ & 18       & 25       & 39       & 42       & 46    & 49    \\ 
\hline
Error    & 6.20e-15 & 4.89e-15  & 8.88e-15 & 9.60e-15 & 7.14e-15 & 8.39e-15
\end{tabular}
\label{tab:errorA}
\end{table}

   
\subsection{Code Validation}
We now show that our method is correctly implemented for the case of the Laplace Dirichlet and Neumann problem for the disk $\mathbb{D}$. Recall the closed form of the matrix entries in \eqref{eq:OpEigens}. We consider $g_d$ (resp.~$g_n$) as the Dirichlet (resp.~Neumann) traces of a plane wave: 
\begin{align*}
\exp( \imath k_0 \vx \cdot  \mathbf{d} )
\end{align*}
where $\mathbf{d}$ is an unitary vector which can be characterized in terms of two angles:
$$
\mathbf{d} = (\cos\theta_0 \cos\varphi_0, \sin\theta_0\cos\varphi_0,\sin\varphi_0),
$$
and $k_0$ is the wavenumber of the plane wave. In Figure \ref{fig:errorA} we show the error of the approximated solution with respect to the semi-analytic solution --the one obtained using  \eqref{eq:OpEigens} for the matrix computations and a fixed value of $N$. Notice that the super-algebraic convergence rate is achieved, i.e.~linear behavior in log-linear scale.

\begin{figure}
  \subfigure[
	$\Gamma_1$ 
  ]{   
   \includegraphics[scale=0.19]{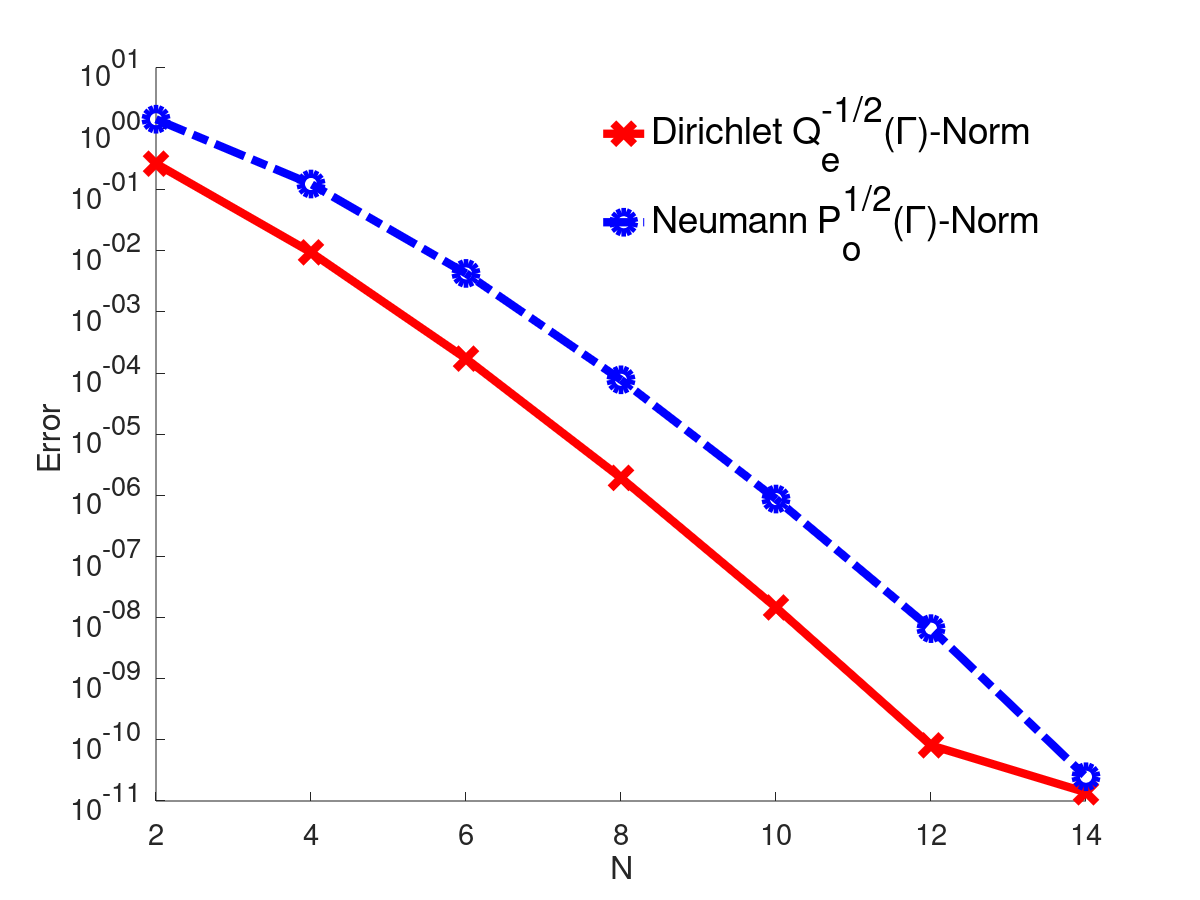}
   }
  \subfigure[
    right hand-side
  ]{
  \includegraphics[width=.47\textwidth]{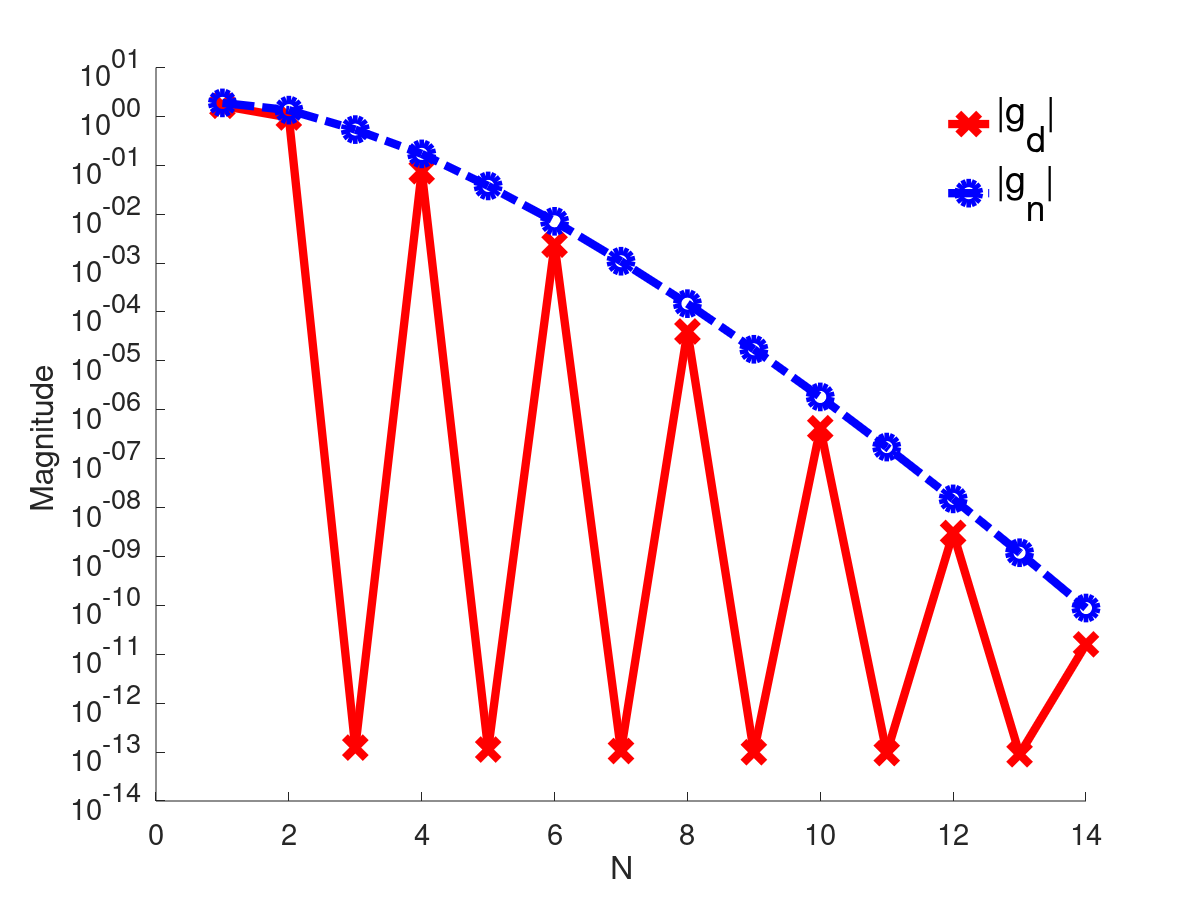}
  }
\label{fig:errorA}
\caption{(a) Convergence curves for the Laplace problems on the disk. Parameters for the plane wave are $k_0 = 2.8$, $\theta_0 = \pi/3$, $\varphi_0 = \frac{\pi}{4}$. Running times are for $N = 14$ are 181s, 730s for Dirichlet and Neumann cases, respectively.
(b) Maximum value of absolute values in the right-hand side for the corresponding $N$. }
\end{figure}
As in the two-dimensional case \cite{ElPinto}, Figure \ref{fig:errorA} suggests that one could deduce the decaying behavior of solution coefficients from the right-hand side coefficients. For the Laplace case on a disk, this comes directly from \eqref{eq:OpEigens} but for more general cases, to the best of our knowledge, this has not been done. One could prove results in this context by establishing a complete theory of pseudo-differential operators on screens acting in the auxiliary spaces, in a similar fashion \cite{Averseng2019} presented for the two-dimensional case.

\subsection{More complex screens}
We fix the wavenumbers $k = 2.8$, and  for right-hand side we use a plane wave with $k_0 = k$, $\theta_0 = \frac{\pi}{3}$, $\varphi_0 = \frac{\pi}{4}$. Let us first consider two distinct geometries: 
$\Gamma_1$  truncated elliptic screen given by 
$$\vr_1(r,\theta) = r( \cos  \theta, 2.8\sin\theta,0)$$
and  $\Gamma_2$ a truncated paraboloid, 
$$\vr_2(r,\theta) = r( \cos  \theta, 2.8\sin\theta,-0.56 r).$$

Results for $\Gamma_1, \Gamma_2 $ are presented in  Figure \ref{fig:Convg1}, wherein we obtain super-algebraic convergence --linear convergence in log-linear scale-- as stated in Theorems \ref{theo:dirichconverg}, \ref{theo:neumannconverg}. 
\begin{figure}
  \subfigure[
	$\Gamma_1$ 
  ]{   
    \includegraphics[width=.47\textwidth]{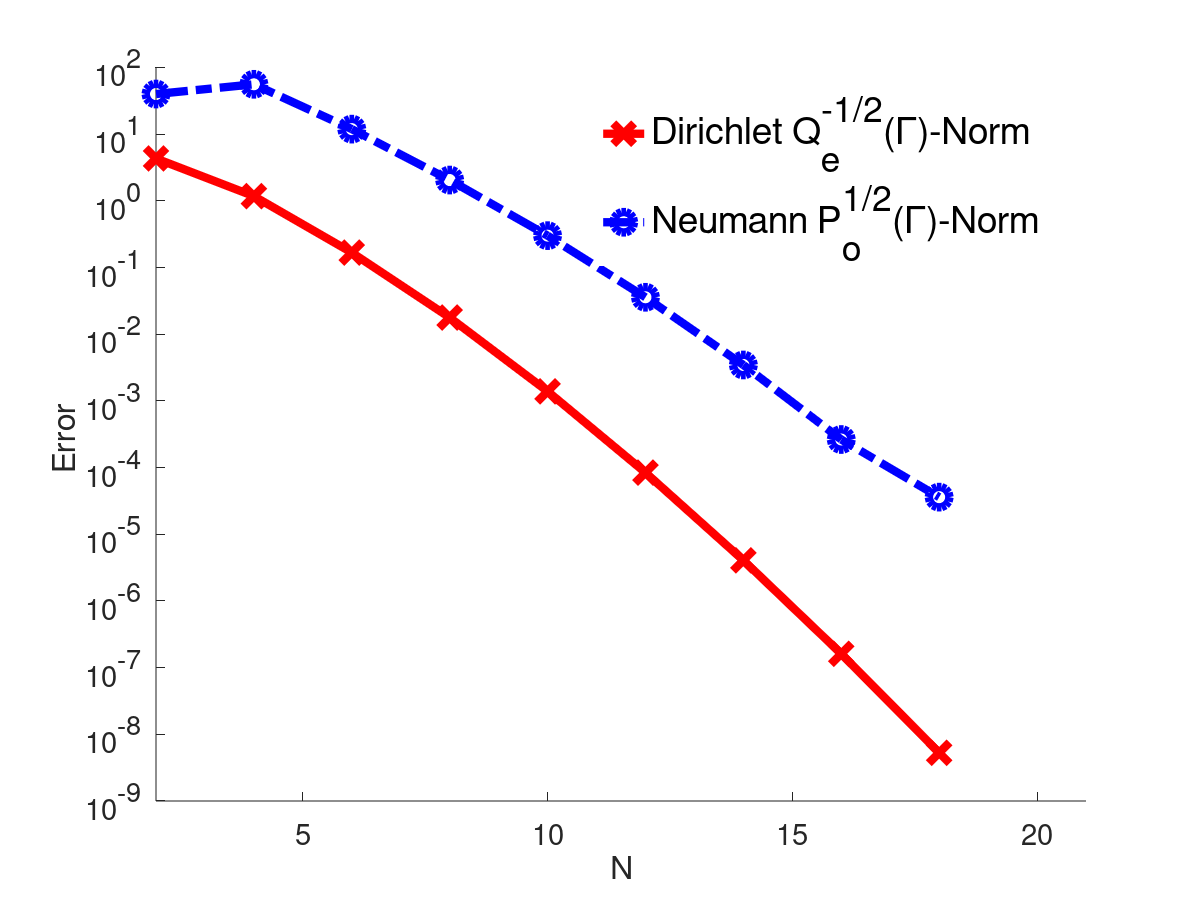}
  }
  \subfigure[
    $\Gamma_2$
  ]{
  \includegraphics[width=.47\textwidth]{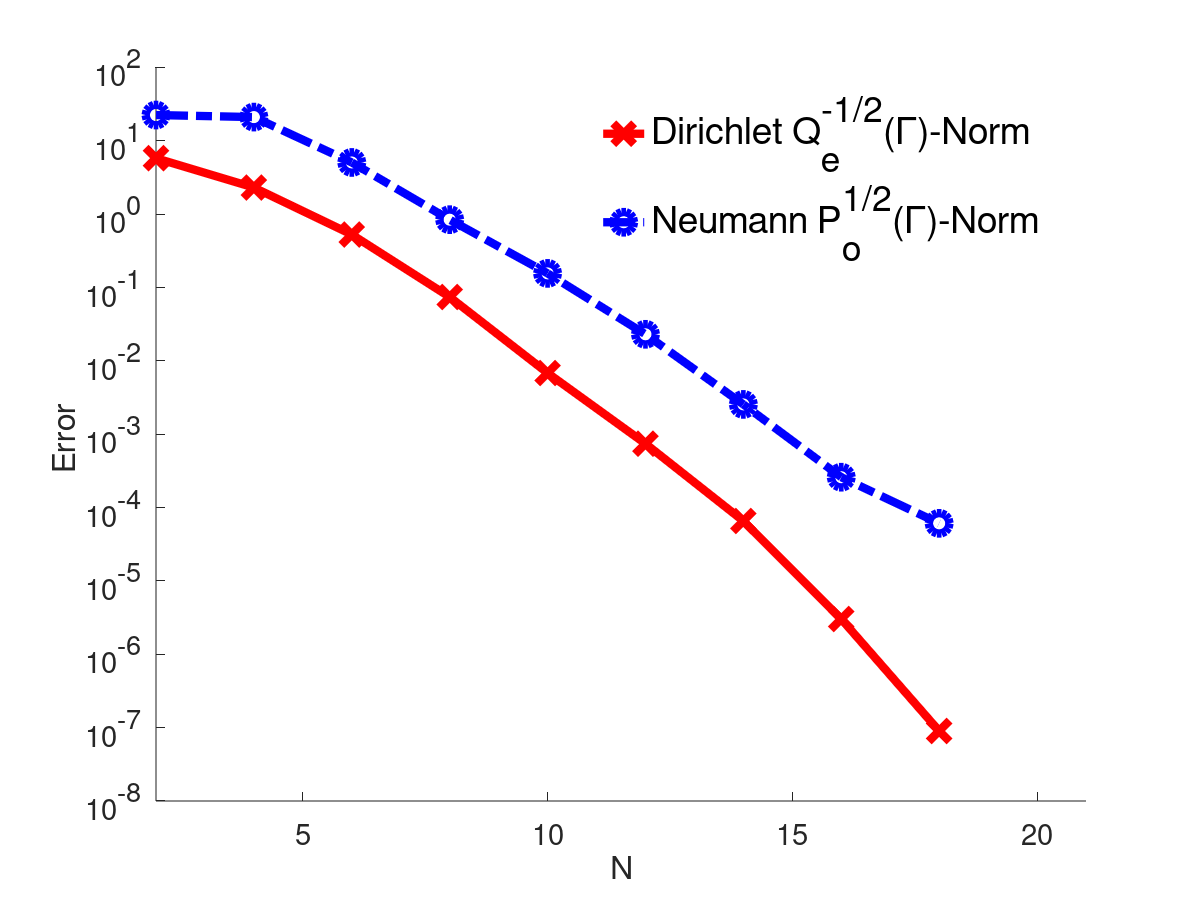}
  }
  \caption{ 
Error in the $Q^{-\half}_e$-Norm for the Dirichlet problems, and $P_o^{\half}$ for the Neumann. Overkill solutions are computed using $N= 20$. Run times for Dirichlet and Neumann problem (with $N=20$) are 749s, 2087s respectively. 
}
\label{fig:Convg1}
\end{figure}

Next, we show the impact of critical points through the following screens. $\Gamma_3$, described by $$\vr_3(r,\theta) = r(1-0.2r^3\cos(3\theta))(\cos\theta,\sin\theta,0),$$
and $\Gamma_4$, with parametrization $$\vr_4(r,\theta) = r(1-0.3r^3\cos(3\theta))(\cos\theta,\sin\theta,0).$$

The case $k=2.8$ is depicted in Figure \ref{fig:Convg2}. While these last two screens are similar, one can see that the error convergence for $\Gamma_4$ is worse than for the other cases. This is explained by the Jacobians' behavior: for $\Gamma_3$, one has 
$$|r(1- 0.2r^3 \cos(3\theta))(1-0.8 r^3 \cos\theta)|,$$
which is no-where null, whereas for $\Gamma_4$ 
$$|r(1- 0.3r^3 \cos(3\theta))(1-1.2 r^3 \cos\theta)|,$$ 
is zero on the curve $1-1.2 r^3 \cos\theta =0$.
\begin{figure}
  \subfigure[
	$\Gamma_3$ 
  ]{   
    \includegraphics[width=.47\textwidth]{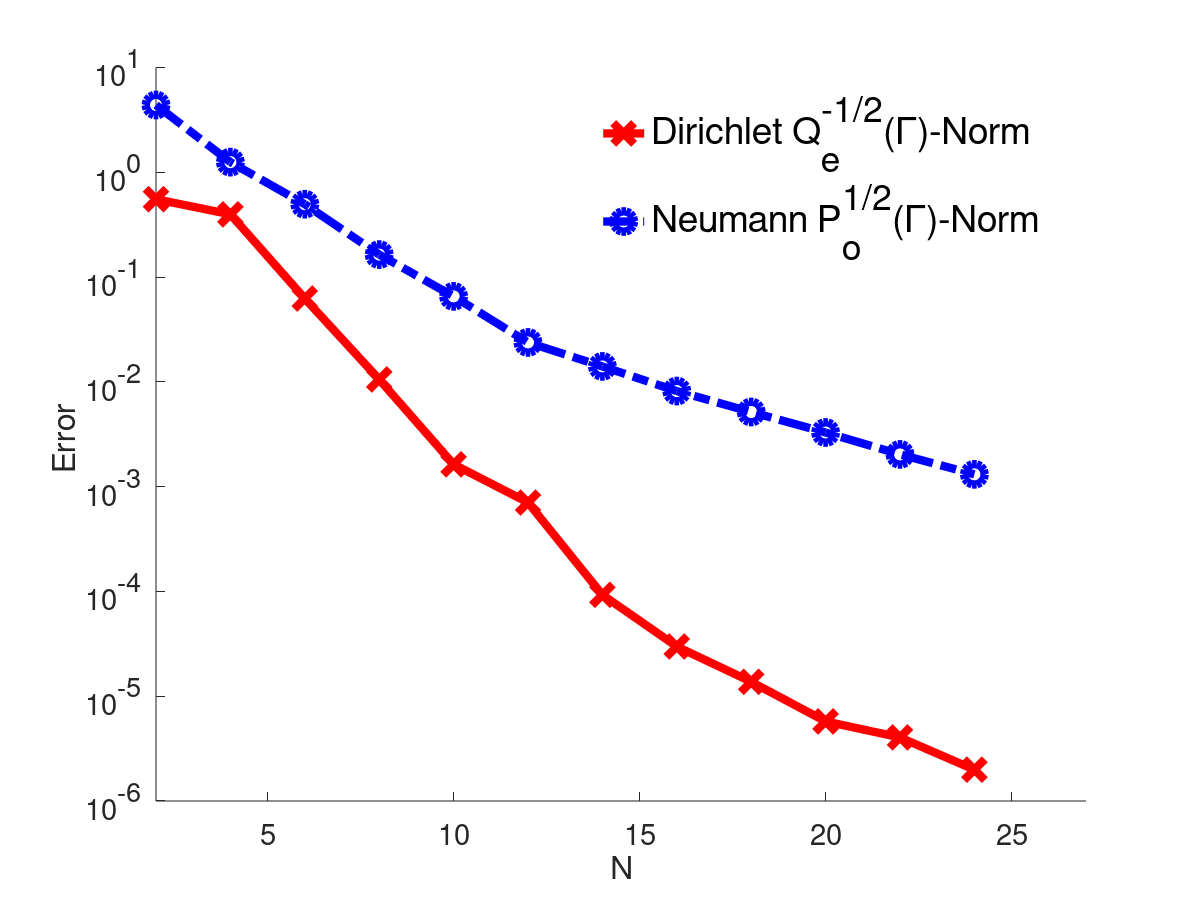}
  }
  \subfigure[
    $\Gamma_4$
  ]{
  \includegraphics[width=.47\textwidth]{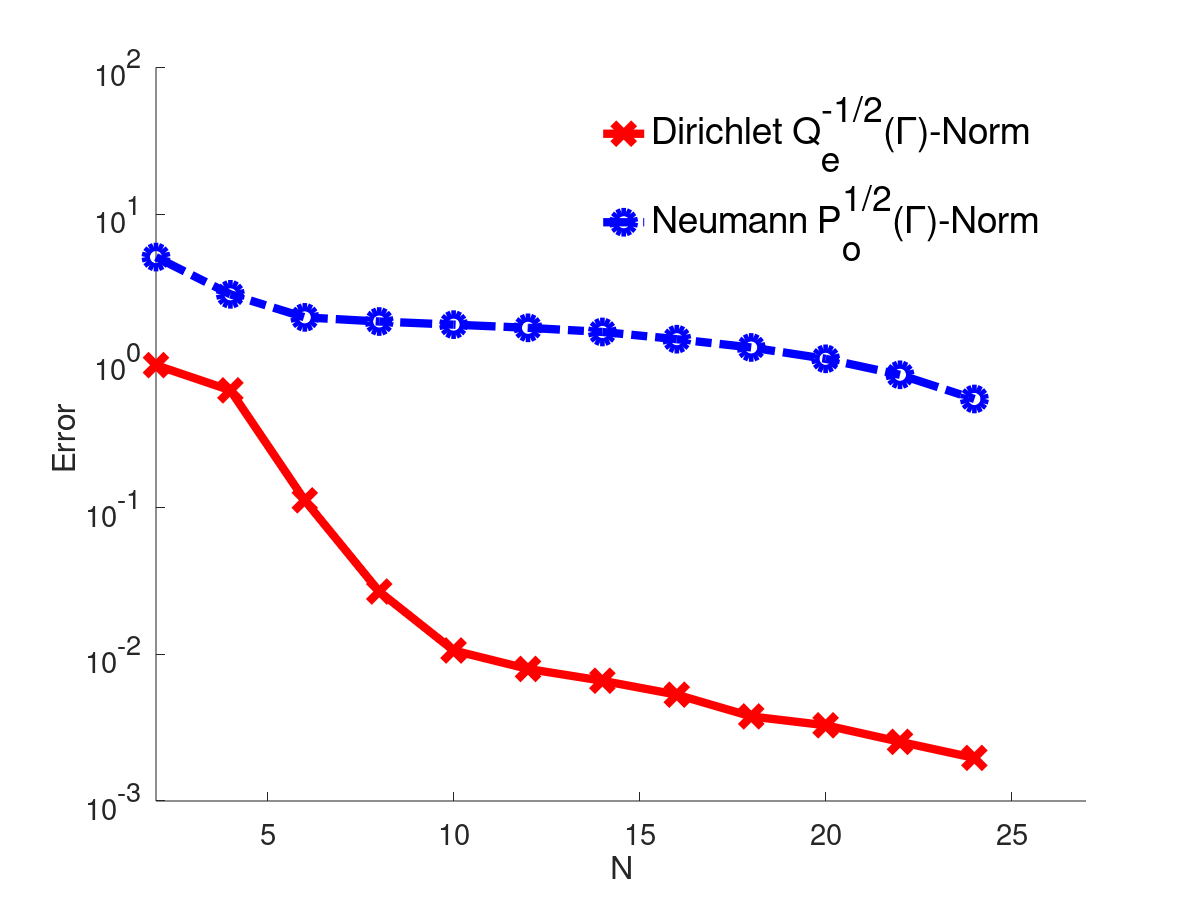}
  }
  \caption{ 
Error in the $Q^{-\half}_e$-Norm for the Dirichlet problems, and $P_o^{\half}$ for the Neumann. Overkill solutions are computed using $N= 28$. Run times for Dirichlet and Neumann problem (with $N=28$) are 2800s, 6198s respectively. 
}
\label{fig:Convg2}
\end{figure}

Finally, and for illustration purposes, we consider a highly complex screen, $\Gamma_5$, along with error convergence and volume solution plots (see Figures \ref{fig:Convg3}(a)--(d)). The screen is given by the parametrization:
\begin{align*}
\vr_5(r,\theta) = (x_5(r,\theta),y_5(r,\theta),z_5(r,\theta)),
\end{align*}
where $(x_5(r,\theta),y_5(r,\theta)) := r(1-0.2r^3\cos(3\theta)) (\cos \theta , \sin \theta )$ and $ z_5(r,\theta) :=
0.2 r \cos(7 y_5(r,\theta) )$.
\begin{figure}
  \subfigure[
	$\Gamma_5$ 
  ]{   
    \includegraphics[width=.47\textwidth]{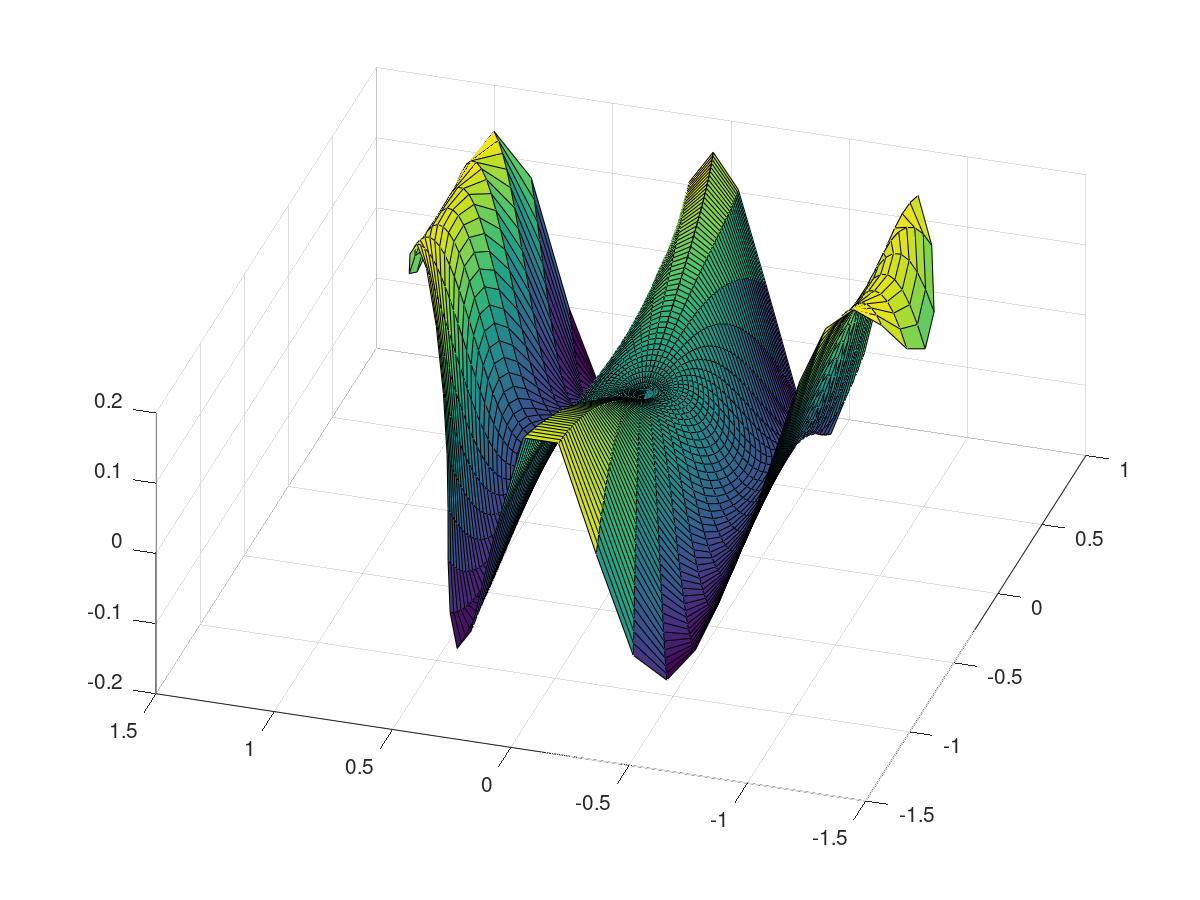}
  }
    \subfigure[
	$\Gamma_5$ 
  ]{   
    \includegraphics[width=.47\textwidth]{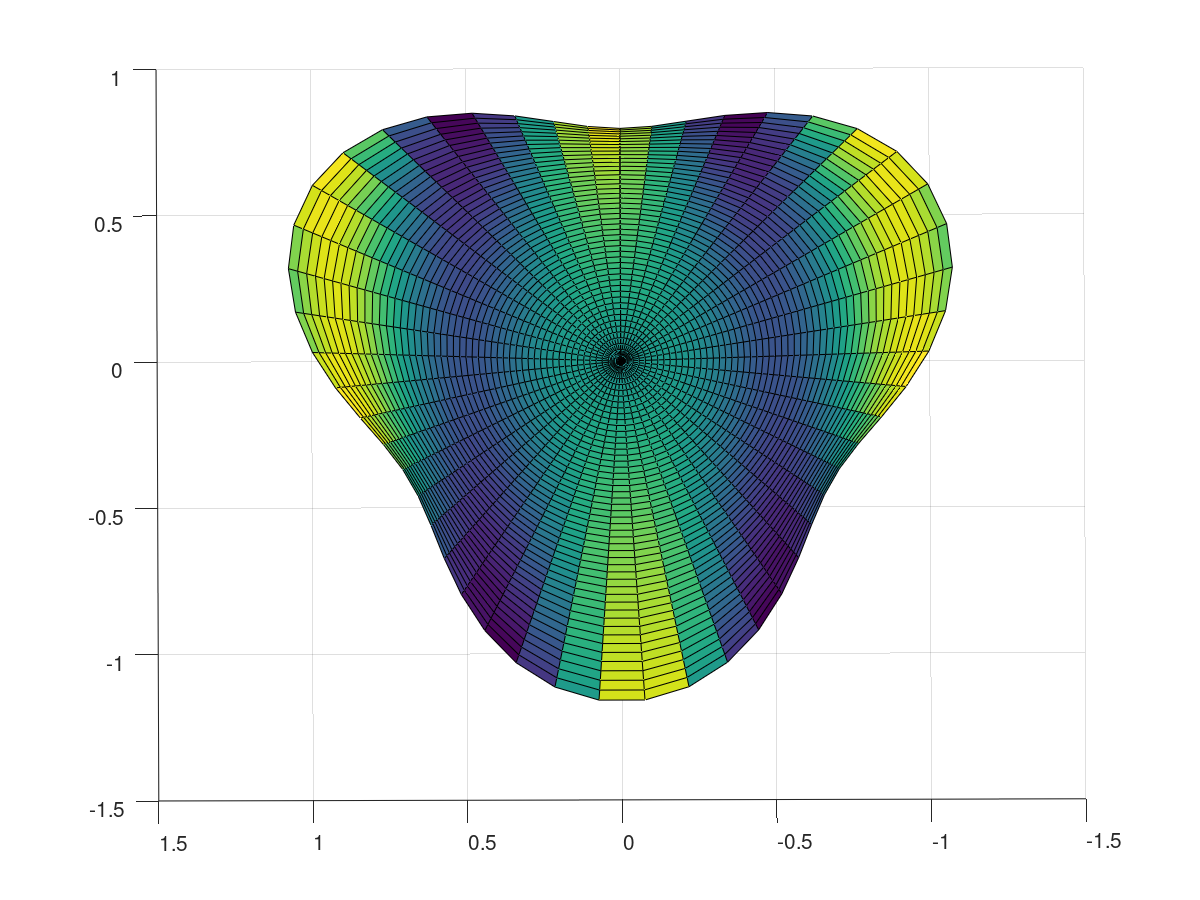}
  }
  \subfigure[
	Convergence 
  ]{   
    \includegraphics[width=.47\textwidth]{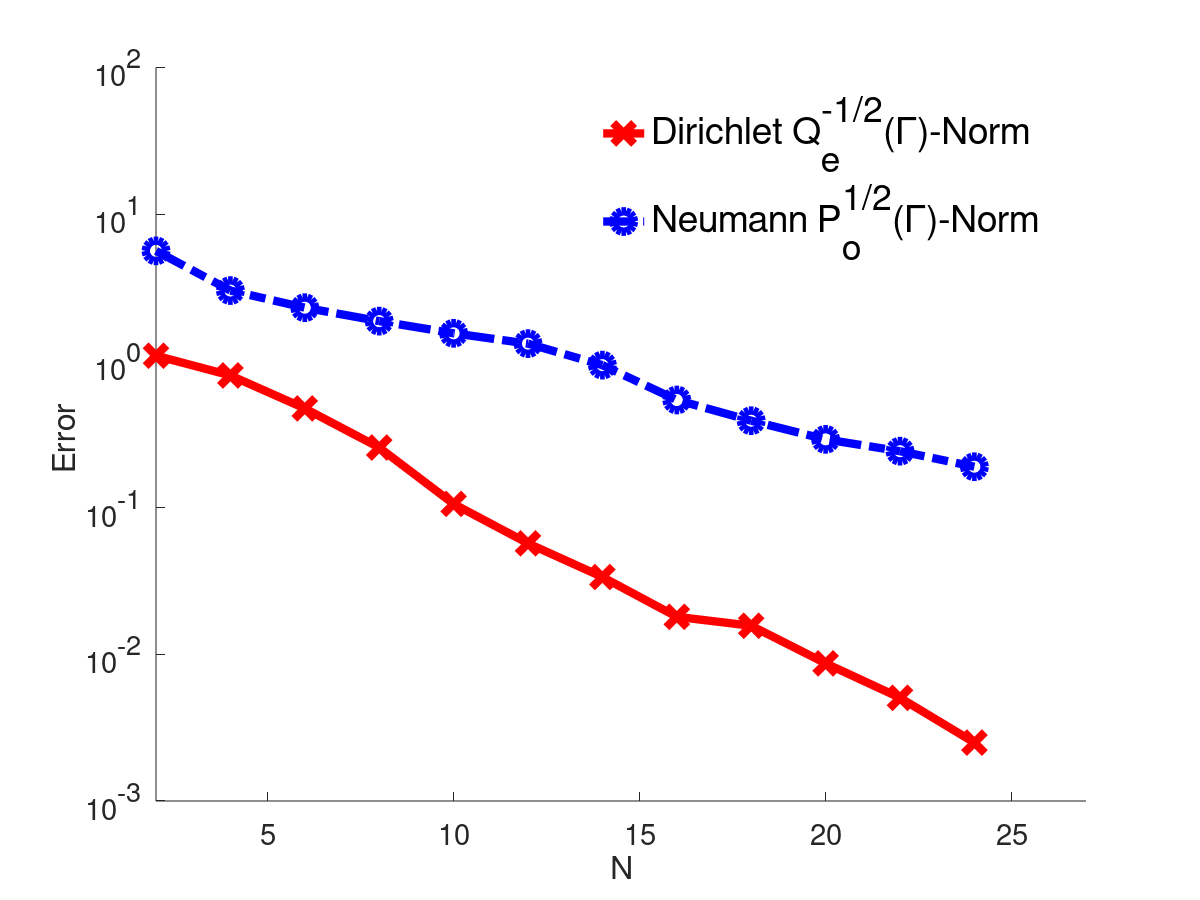}
  }
  \subfigure[
    Volume solution
  ]{
  \includegraphics[width=.47\textwidth]{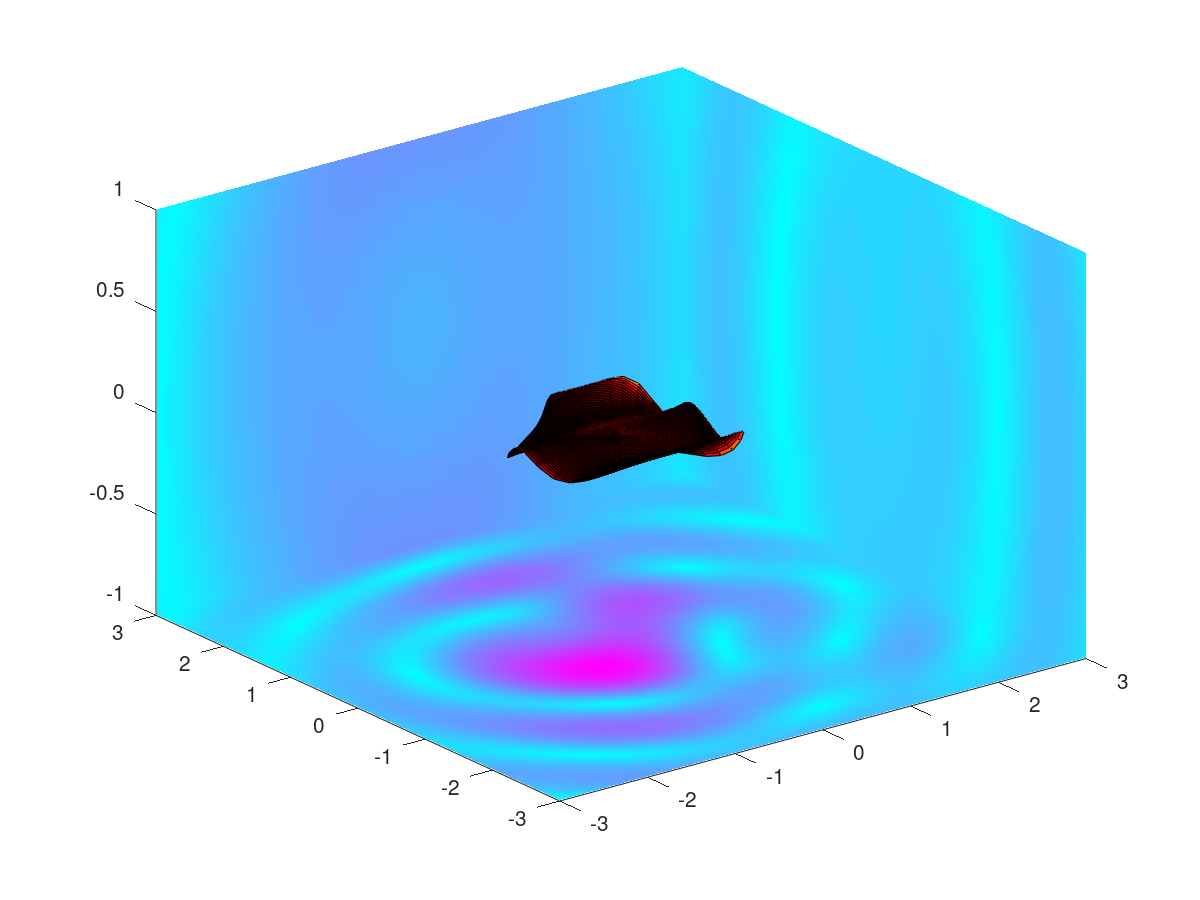}
  }
  \caption{ 
(a)-(b) Screen $\Gamma_5$ profiles. (c) Convergence plots for Dirichlet and Neumann problems computed against an overkill solution computed with $N=28$, for $k= 2.8$, $k_0 = k$, $\theta_0 = \frac{\pi}{2}$, and $\phi_0 = 0$. (d) Plot of the volume solution for the Dirichlet problem.
}
\label{fig:Convg3}
\end{figure}

\section{Concluding Remarks}
\label{sec:conclusion}

The present work presents a high-order discretization method for open screens based on weighted projections of the spherical harmonics functions. We have proved that the method converges super-algebraically and also described implementation details. As an efficient solver for the forward problem, future efforts are directed towards using our method to solve shape optimization or inverse problems.

While the analysis presented here describes how approximation errors behave as a function of the number of degrees of freedom, it does not provide any insight on how to choose this number in terms of the problem parameters, in particular, the wavenumber $k$. We refer to \cite{Kexplicit:2012} for a general review on this topic and \cite{cw:2015} for results related to screens. 


\appendix
\section{Proof of Lemma \ref{lemma:sobRev}}
\label{proof:lemma:sobRev}
Recall the weakly- and hyper-singular BIOs defined in Section \ref{sec:BIOS}. For $k=0$ and $\Gamma = \mathbb{D}$, we write
\begin{align*}
V_\mathbb{D} u(\vx) &:= \int_\mathbb{D} \frac{1}{4 \pi \| \vx - \vx' \|_2} u (\vx') d \vx', \\ 
W_\mathbb{D} u (\vx) &:= - \gamma_{n,\vx} \int_\mathbb{D} 
\gamma_{n,\vx'} \frac{1}{4 \pi \| \vx-\vx'\|_2} u(\vx') d\vx',
\end{align*}
These operators have two key properties. First, they are continuous and elliptic so they can be used to define equivalent norms. In fact, by \cite[Theorem 3.5.9] {Sauter:2011} we have that
\begin{align}
\label{eq:OpNormsEq}
\| u \|^2_{\widetilde{H}^{-\half}(\mathbb{D})} \cong \langle V_\mathbb{D} u , u \rangle_\mathbb{D},  \quad 
\| u \|^2_{\widetilde{H}^{\half}(\mathbb{D})} \cong \langle W_\mathbb{D} u , u \rangle_\mathbb{D}.
\end{align}
Secondly, we have a characterization of the eigenvalues of these two operators\footnote{See \cite{wolfe1971} for the original proof of the weakly-singular BIO and \cite[Theorem 2.7.1]{RAN17} for the hyper-singular case}:
\begin{align}
\label{eq:OpEigens}
V_\mathbb{D} q^l_m = \frac{1}{4}\Lambda_{l,m} p^l_m, \quad l+m \text{ even},\quad
W_\mathbb{D} p^l_m = \frac{1}{\Lambda_{l,m}} q^l_m, \quad l+m \text{ odd},
\end{align}
where 
\begin{align*}
\Lambda_{l,m} =  \frac{\Gamma \left( \frac{l+|m|+1}{2}\right)\Gamma \left( \frac{l-|m|+1}{2}\right)}{ \Gamma\left( \frac{l+|m|+2}{2}\right)\Gamma \left( \frac{l-|m|+2}{2}\right)},
\end{align*} 
here $\Gamma$ denotes the Gamma function and not an screen. Using Gautschi's inequality \cite{Gautschi59}, it holds that 
\begin{align}
\label{eq:LambdaEst}
\frac{1}{l+1} \lesssim \Lambda_{l,m} \lesssim 
\frac{1}{\sqrt{l+1}}.
\end{align} 

With these elements, we proceed with the proof of Lemma \ref{lemma:sobRev}. Pick any smooth function $u$ on $\mathbb{D}$ and consider its  even lifting to $\mathbb{S}$. Since spherical harmonics are dense on smooth functions defined on the sphere, we can approximate the lifting $u$ by even spherical harmonics. Thus, $u$ can be expanded as $$u= \sum_{l =0}^\infty \sum_{\substack{m=-l\\ m+l \text{ even}}}^l \langle u , q_m^l\rangle p_m^l,$$
now \eqref{lemsobRev1} follows from the orthogonality relation \eqref{eq:diskOrt}.  To prove \eqref{lemsobRev2}, we use the density of functions $q_l^e$ in $\widetilde{H}^{-\half}(\mathbb{D})$, i.e.
$$ u = \sum_{l=0}^\infty \sum_{\substack{m=-l\\ m+l \text{ even}}}^l u^l_m q^l_m,$$
wherein, by orthogonality it holds that $u^l_m = \langle u , p^l_m\rangle$. Hence, computing the norm of $u$ using the equivalence \eqref{eq:OpNormsEq}, the relation  \eqref{eq:OpEigens} and the estimate \eqref{eq:LambdaEst} yields
\begin{align*}
\|u \|_{Q_e^{-\half}(\mathbb{D})} \lesssim \|u\|_{\widetilde{H}^{-\half}(\mathbb{D})} \lesssim \|u \|_{Q_e^{-\fourth}(\mathbb{D})},
\end{align*}
which implies the result. Similar ideas are used to show \eqref{lemsobRev3}. \qed

\section{Singular Integrals Analysis}
\label{Appendix:quads}
We consider the integrals $I^c,I^d,I^e$  defined in Section \ref{sec:AproxSingInt}. These integrals have in common that the singularities occur in specifics points in three- or two-dimensional spaces, i.e.~when three or two variables take a specific value. In contrast,  $I^a,I^b$, singularities occur when one variable takes a specific value regardless of the other. 
\subsection{General idea}
Let us start with the simpler case of an integral which has a singularity in 2D: 
\begin{align*}
I := \int_0^1 \int_0^1 \frac{1}{\sqrt{x+y}} dy dx,
\end{align*}
the integrand has a singularity at $(x,y) = (0,0)$. Performing the polar change of variables $x= \rho \cos\alpha$, $y= \rho \sin\alpha$, we have that 
\begin{align*}
\begin{split}
I = \int_0^{\frac{\pi}{4}} \int_0^{\frac{1}{\cos\alpha}} \frac{\sqrt{\rho}}{\sqrt{\cos\alpha+\sin\alpha}} d\rho d\alpha +
\int_{\frac{\pi}{4}}^{\frac{\pi}{2}} \int_0^{\frac{1}{\sin\alpha}} \frac{\sqrt{\rho}}{\sqrt{\cos\alpha+\sin\alpha}} d\rho d\alpha.
\end{split}
\end{align*}
Moreover, one can fix the integration domain for the $\rho$ variable by doing a linear change of variable
\begin{align*}
\begin{split}
I = \int_0^{\frac{\pi}{4}} \int_0^{1} \frac{\sqrt{t}}{(\cos\alpha)^{3/2}\sqrt{\cos\alpha+\sin\alpha}} dt d\alpha +\\
\int_{\frac{\pi}{4}}^{\frac{\pi}{2}}  \int_0^{1} \frac{\sqrt{t}}{(\sin \alpha)^{3/2}\sqrt{\cos\alpha+\sin\alpha}} dt d\alpha. 
\end{split}
\end{align*}
These last two integrals can be straightforwardly computed by using a Jacobi rule in $t$ and Gauss-Legendre in $\alpha$, resulting in an optimal convergence rate --exponential in this particular case.  The idea is in fact very simple: use polar coordinates with the origin in the point where the singularity occurs, transferring the multidimensional singularity to the radial coordinate only. The rest of this section gives the detail on each change of variable that is needed and also proving that the resulting integrals are computed with optimal rates. 
\subsection{Integral $I^c$}
\label{sec:Ic}
Consider the integral of the form 
\begin{align}
\label{eq:integralJ1}
\begin{split}
J^{c} :=  \int_{\frac{\sqrt{3}}{2}}^1 \int_{\frac{-\pi}{2}}^{\frac{-\pi}{3}}  \int_{0}^{\half}  \frac{ \lambda A^2(r,\beta) }
{ 4 \pi \|\vr(\vx) - \vr(\vx + \lambda A(r,\beta) \ve_{\theta+\beta}) \|}\\
\frac{r d\lambda d\beta dr  }{\sqrt{1-r^2}\sqrt{1- \| \vx +\lambda A(r,\beta)\ve_{\theta+\beta} \|^2}
}, 
\end{split}
\end{align}
where in comparison to \eqref{eq:I1integral}, we restrict to the first interval for the $\beta$ variable as the other cases follows similarly, and we have omitted the integral in the $\theta$ variable as it is not relevant to the singularity analysis and can be treated with Gauss-Legendre quadrature having not effect on the rate of convergence. Since trial and test polynomials are smooth functions they have also been neglected in the ensuing singularity analysis. Consider the following change of variables: $u^2 = 1-r^2$, $\cos\beta = v$. Define $  d:= 4 \pi \|\vr(\vx) - \vr(\vx + \lambda A(r,\beta) \ve_{\theta+\beta}) \|$ and use expansion \eqref{eq:pesoexp} so that \eqref{eq:integralJ1} becomes
\begin{align*}
J^{c}  = \int_0^{\half} \int_{0}^{\half}  \int_{0}^{\half}  \frac{ \lambda d^{-1}
A^2 du dv d\lambda}{\sqrt{1-\lambda}\sqrt{1-v^2} \sqrt{u^2(1+\lambda)-
2\lambda \sqrt{1-u^2} v A}}, 
\end{align*}
wherein by definition of $A(r,\beta)$ (see \eqref{eq:Afun}), we have that 
\begin{align*}
A = \left( \sqrt{u^2+(1-u^2)v^2} -\sqrt{1-u^2} v\right). 
\end{align*}
Apply the first polar change of variables $u = \rho \cos\alpha$, $v = \rho \sin\alpha$ so that
\begin{align*}
A = \rho\left(  \sqrt{1-\rho^2 \cos^2\alpha \sin^2\alpha} 
-\sin\alpha\sqrt{1-\rho^2\cos^2\alpha}\right) =: \rho \widetilde{A}, 
\end{align*}
and we obtain two integrals 
\begin{align}
J^c_{1} :=  \int_0^{\half} \frac{\lambda}{\sqrt{1-\lambda}} 
\int_{0}^{\frac{\pi}{4}} \int_0^{\frac{1}{2 \cos\alpha}}  \frac{(1-\rho^2\sin^2\alpha)^{-1}\rho^2 d^{-1} \widetilde{A}^2 d\rho d\alpha d\lambda}{  \sqrt{\cos^2\alpha(1+\lambda)-2\lambda \sqrt{1-\rho^2 \cos^2\alpha} \sin\alpha\widetilde{A}}}, \\
J^c_{2} :=  \int_0^{\half} \frac{\lambda}{\sqrt{1-\lambda}} 
\int_{\frac{\pi}{4}}^{\frac{\pi}{2}} \int_0^{\frac{1}{2 \sin\alpha}}  \frac{(1-\rho^2\sin^2\alpha)^{-1}\rho^2 d^{-1} \widetilde{A}^2 d\rho d\alpha d\lambda}{  \sqrt{\cos^2\alpha(1+\lambda)-2\lambda \sqrt{1-\rho^2 \cos^2\alpha} \sin\alpha\widetilde{A}}} .
\end{align}
We apply a linear change of variables to fix the integration domain of the $\rho$ variable. For the first integral, it holds that
\begin{align*}
J^c_{1} =  \int_0^{\half} \frac{\lambda}{\sqrt{1-\lambda}} 
\int_{0}^{\frac{\pi}{4}} \int_0^{\half}  \frac{t^2 d^{-1} \widetilde{A}^2 \cos\alpha^{-3} dt d\alpha d\lambda}{ \sqrt{1-t^2\tan^2\alpha} \sqrt{\cos^2\alpha(1+\lambda)-2\lambda \sqrt{1-t^2} \sin\alpha\widetilde{A}}}, 
\end{align*}
wherein $\widetilde{A} = \left(  \sqrt{1-t^2 \sin^2\alpha} -\sin\alpha\sqrt{1-t^2}\right)$. One can see that this integral converges at the optimal rate when we use the Gauss-Legendre rule for all the variables as neither of the terms inside the square roots vanish in the integration domain. For the second integral we have 
\begin{align*}
J^c_{2} =  \int_0^{\half} \frac{\lambda}{\sqrt{1-\lambda}} 
\int_{\frac{\pi}{4}}^{\frac{\pi}{2}} \int_0^{\half}  \frac{t^2 d^{-1} \widetilde{A}^2  \sin\alpha^{-3} dt d\alpha d\lambda}{ \sqrt{1-t^2} \sqrt{\cos^2\alpha(1+\lambda)-2\lambda \sqrt{1-t^2 \cot^2\alpha} \sin\alpha\widetilde{A}}}, 
\end{align*}
and where $\widetilde{A} = \left(  \sqrt{1-t^2 \cos^2\alpha} -\sin\alpha\sqrt{1-t^2\cot^2\alpha}\right)$. In contrast to $J^c_{1}$, one can easily verify that the integrated has one singularity in $(\lambda,\alpha) = (0,\frac{\pi}{2})$, so further transformations are needed. In particular, we use 
$z = \cos\alpha$, $x^2 = \lambda$. Thus, we have that
\begin{align*}
J^c_{2} =  \int_0^{1/\sqrt{2}} \frac{2x^3}{\sqrt{1-x^2}} 
\int_{0}^{1/\sqrt{2}} \int_0^{\half}  \frac{t^2 d^{-1} \widetilde{A}^2  (1-z^2)^{-2} dt dz dx}{ \sqrt{1-t^2} \sqrt{z^2(1+x^2)-2x^2 \sqrt{1-z^2-t^2z^2} \widetilde{A}}}, 
\end{align*}
with $\widetilde{A} = \left(  \sqrt{1-t^2 z^2} -
\sqrt{1-z^2-t^2 z^2}\right).$ Once again we make a polar change of variables $x = \sigma \cos (\phi),$ $z = \sigma \sin (\phi)$,  and we obtain two integrals 
\begin{align*}
\begin{split}
J^c_{2,1} &:=  \int_0^{\half}
\int_{0}^{\frac{\pi}{4}} \int_0^{1/(\sqrt{2}\cos\phi)}    \frac{2 (\sigma \cos\phi)^3 t^2 d^{-1} \widetilde{A}^2  (1-(\sigma \sin\phi)^2)^{-2}} 
{\sqrt{1-(\sigma \cos\phi)^2}
 \sqrt{1-t^2} 
}\\
&\times\frac{d\sigma d\phi dt}{ \sqrt{ \sin^2\phi(1+(\sigma \cos\phi)^2)-2\cos^2\phi \sqrt{1-(\sigma \sin\phi)^2(1+t^2)} \widetilde{A}}}
 , \\
\end{split}
\end{align*}
\begin{align*}
\begin{split}
J^c_{2,2}&:=  \int_0^{\half}
\int_{\frac{\pi}{4}}^{\frac{\pi}{2}} \int_0^{1/(\sqrt{2}\sin\phi)}    \frac{2 (\sigma \cos\phi)^3 t^2 d^{-1} \widetilde{A}^2  (1-(\sigma \sin\phi)^2)^{-2}} 
{\sqrt{1-(\sigma \cos\phi)^2}
 \sqrt{1-t^2} 
}\\
&\times\frac{d\sigma d\phi dt}{ \sqrt{ \sin^2\phi(1+(\sigma \cos\phi)^2)-2\cos^2\phi \sqrt{1-(\sigma \sin\phi)^2(1+t^2)} \widetilde{A}}}
 , \\
\end{split}
\end{align*}
where $\widetilde{A} = \left(  \sqrt{1-t^2 (\sigma \sin\phi)^2} -
\sqrt{1-(\sigma \sin\phi)^2(1+t^2)}\right)$. Finally, we take the corresponding change of variables needed to fix the integration domain for the $\sigma$ variable, and we obtain 
\begin{align*}
\begin{split}
J^c_{2,1} &=  \int_0^{\half}
\int_{0}^{\frac{\pi}{4}} \int_0^{1/\sqrt{2}}    \frac{2 s^3 t^2 d^{-1} \widetilde{A}^2  (1-(s \tan\phi)^2)^{-2}} 
{\cos\phi\sqrt{1-s^2}
 \sqrt{1-t^2} 
}\\
&\times\frac{ds d\phi dt}{ \sqrt{ \sin^2\phi(1+s^2)-2\cos^2\phi \sqrt{1-(s \tan^2\phi)(1+t^2)} \widetilde{A}}}
 , \\
\end{split}
\end{align*}
with $\widetilde{A} = \left(  \sqrt{1-t^2 (s \tan\phi))^2} -\sqrt{1-(s \tan\phi)^2(1+t^2)}\right).$
It is easy to see that the integrand has not singularities and as so it can be integrated with optimum rate using a tenzorisation of the Gauss-Legendre rule. For the second integral we have 
\begin{align*}
\begin{split}
J^c_{2,2} &=  \int_0^{\half}
\int_{\frac{\pi}{4}}^{\frac{\pi}{2}} \int_0^{1/\sqrt{2}}    \frac{2 (s \cot\phi)^3 t^2 d^{-1} \widetilde{A}^2  (1-s^2)^{-2}} 
{\sin\phi\sqrt{1-(s \cot\phi)^2}
 \sqrt{1-t^2} 
}\\
&\times\frac{ds d\phi dt}{ \sqrt{ \sin^2\phi(1+(s \cot\phi)^2)-2\cos^2\phi \sqrt{1-s^2(1+t^2)} \widetilde{A}}}
 , \\
\end{split}
\end{align*}
where $\widetilde{A} = \left(  \sqrt{1-t^2 s^2} -
\sqrt{1-s^2(1+t^2)}\right)$. Once again the integrand does not has singularities and the optimum rate of convergence is retrieved. 

\subsection{Integral $I^d$}
As in the previous case we simplify the Integral $I^d$ in \eqref{eq:I2integral} to the following integral 
\begin{align}
\label{eq:integralJ2}
J^{d} :=  \int_{\frac{\sqrt{3}}{2}}^1 \int_{\frac{-\pi}{3}}^{\frac{\pi}{3}}  \int_{0}^{\half}  \frac{ \lambda A^2(r,\beta) }
{ 4 \pi \|\vr(\vx) - \vr(\vx + \lambda A(r,\beta) \ve_{\theta+\beta}) \|}
\frac{(1-r^2)^{-\half}r d\lambda d\beta dr  }{\sqrt{1- \| \vx +\lambda A(r,\beta)\ve_{\theta+\beta} \|^2}
}, 
\end{align}
where we have only take the first interval for the $\beta$ variable as the other case is similar. We have to make a transformation in $\lambda,r$: start by fixing the singularity $(\lambda,r) = (0,1)$ to the origin of the new variables $r^2 = 1- u^2$, $v^2 = \lambda$, by doing so we obtain 
\begin{align*}
J^{d} :=  \int_{0}^{\half} \int_{\frac{-\pi}{3}}^{\frac{\pi}{3}}  \int_{0}^{1/\sqrt{2}}  \frac{ 2 v^3 d^{-1} A^2
dv d\beta dr  }{\sqrt{1-v^2}\sqrt{(u^2(1+v^2)
-2v^2\sqrt{1-u^2}\cos\beta A}}, 
\end{align*}
where $A = \sqrt{u^2 +(1-u^2)\cos^2\beta}-\sqrt{1-u^2}\cos\beta$. Now, we make the polar change of variables $u = \rho \cos\alpha$, $v = \rho \sin\alpha$, which leads to
\begin{align*}
\begin{split}
J^d_{1} &:= \int_{\frac{-\pi}{3}}^{\frac{\pi}{3}}  \int_{0}^{\arctan(\sqrt{2})}  \int_{0}^{1/(2\cos\alpha)} 
 \frac{ d^{-1} 2 (\rho \sin\alpha)^3 A^2
  }{\sqrt{1-(\rho \sin\alpha)^2}}\\
  &\times \frac{ d\rho d\alpha d\beta }
  {\sqrt{\cos^2\alpha(1+(\rho \sin\alpha)^2)
-2\sin^2\alpha\sqrt{1-(\rho \cos\alpha)^2}\cos\beta A}},
\end{split}
\end{align*}
\begin{align*}
\begin{split}
 J^d_{2} &:= \int_{\frac{-\pi}{3}}^{\frac{\pi}{3}}  \int_{\arctan(\sqrt{2})}^{\frac{\pi}{2}}  \int_{0}^{1/(\sqrt{2}\sin\alpha)} 
 \frac{ d^{-1} 2 (\rho \sin\alpha)^3 A^2
  }{\sqrt{1-(\rho \sin\alpha)^2}}\\
 &\times \frac{ d\rho d\alpha d\beta }
  {\sqrt{\cos^2\alpha(1+(\rho \sin\alpha)^2)
-2\sin^2\alpha\sqrt{1-(\rho \cos\alpha)^2}\cos\beta A}},
\end{split}
\end{align*}
with $A = \sqrt{ (\rho \cos\alpha)^2+(1-(\rho \cos\alpha)^2)\cos^2\beta}-\sqrt{1-(\rho \cos\alpha)^2}\cos\beta.$

Finally we apply the corresponding linear transformation to fix the $\rho$ integration domain, 
\begin{align*}
\begin{split}
J^d_{1} &:= \int_{\frac{-\pi}{3}}^{\frac{\pi}{3}}  \int_{0}^{\arctan(\sqrt{2})}  \int_{0}^{\half}  \frac{ d^{-1} 2 (t \tan\alpha)^3 A^2
   }{\cos\alpha\sqrt{1-(t \tan\alpha)^2}}\\
   &\times \frac{dt d\alpha d\beta }{\sqrt{\cos^2\alpha(1+(t \tan\alpha)^2)
-2\sin^2\alpha\sqrt{1-t^2}\cos\beta A}},
\end{split}
\end{align*}
where $A = \sqrt{ t^2+(1-t^2)\cos^2\beta}-\sqrt{1-t^2}\cos\beta$. This integrand is smooth: only possible singularities can occur when $\rho =0$ but are eliminated by the numerator, and so the rate of convergence is optimal. Similarly for the second part as 
 \begin{align*}
\begin{split}
 J^d_{2}   &:= \int_{\frac{-\pi}{3}}^{\frac{\pi}{3}}  \int_{\arctan(\sqrt{2})}^{\frac{\pi}{2}}  \int_{0}^{1/\sqrt{2}} 
 \frac{ d^{-1} 2 t^3 A^2
  }{\sin\alpha\sqrt{1-t^2}}\\
  &\times\frac{ d\rho d\alpha d\beta }
  {\sqrt{\cos^2\alpha(1+t^2)
-2\sin^2\alpha\sqrt{1-(t \cot\alpha)^2}\cos\beta A}},
\end{split}
\end{align*}
where the new change of variables leads to
\begin{align*}
A = \sqrt{ (t \cot\alpha)^2+(1-(t \cot\alpha)^2)\cos^2\beta}-\sqrt{1-(t \cot\alpha)^2}\cos\beta.
\end{align*}
Once again the integrand is smooth and the optimal convergence rate is retrieved. 

\subsection{Integral $I^e$ }
Finally, we consider the simplification of $I^e$ defined as in \eqref{eq:I3integral}:
\begin{align*}
J^e :=  \int_{\frac{\sqrt{3}}{2}}^1 \int_{\frac{-\pi}{2}}^{\frac{-\pi}{3}}  \int_{\half}^{1}  \frac{ \lambda A^2(r,\beta) }
{ 4 \pi \|\vr(\vx) - \vr(\vx + \lambda A(r,\beta) \ve_{\theta+\beta}) \|}
\frac{r d\lambda d\beta dr  }{\sqrt{1-r^2}\sqrt{1- \| \vx +\lambda A(r,\beta)\ve_{\theta+\beta} \|^2}
}, 
\end{align*}
we have again fixed the value of $\beta$ to the first interval as other cases are similar. The analysis here follows that of the first part of $J_1$ (up until the first polar change of variables). We obtain two integrals, the first one being
\begin{align*}
J^e_{1} =  \int_{\half}^1 \frac{\lambda}{\sqrt{1-\lambda}} 
\int_{0}^{\frac{\pi}{4}} \int_0^{\half}  \frac{t^2 d^{-1} \widetilde{A}^2 \cos^{-3}\alpha dt d\alpha d\lambda}{ \sqrt{1-t^2\tan^2\alpha} \sqrt{\cos^2\alpha(1+\lambda)-2\lambda \sqrt{1-t^2} \sin\alpha\widetilde{A}}}, 
\end{align*}
with $\widetilde{A} = \left(  \sqrt{1-t^2 \sin^2\alpha} 
-\sin\alpha\sqrt{1-t^2}\right)$, and the second one 
\begin{align*}
J^e_{2} =  \int_{\half}^1 \frac{\lambda}{\sqrt{1-\lambda}} 
\int_{\frac{\pi}{4}}^{\frac{\pi}{2}} \int_0^{\half}  \frac{t^2 d^{-1} \widetilde{A}^2  \sin\alpha^{-3} dt d\alpha d\lambda}{ \sqrt{1-t^2} \sqrt{\cos^2\alpha(1+\lambda)-2\lambda \sqrt{1-t^2 \cot^2\alpha} \sin\alpha\widetilde{A}}}, 
\end{align*}
where $\widetilde{A} = \left(  \sqrt{1-t^2 \cos^2\alpha} -\sin\alpha\sqrt{1-t^2\cot^2\alpha}\right)$. Contrary to the first case, these integrands are not smooth due to the term $(1-\lambda)^{-1}$. However, by using a Jacobi rule in $\lambda$ one recovers the optimal convergence rate.

\end{document}